\def\NAT@spacechar{~}
\def\thm@space@setup{
\thm@preskip=4mm
\thm@postskip=0mm
}
\crefname{lemma}{Lemma}{Lemmas}
\crefname{theorem}{Theorem}{Theorems}
\crefname{proposition}{Proposition}{Propositions}
\newcommand{\defn}[1]{\textcolor{Maroon}{\emph{#1}}}
\patchcmd{\section}{\scshape}{\bfseries}{}{}
\renewcommand{\@secnumfont}{\bfseries}
\newcommand{\ceil}[1]{\lceil{#1}\rceil}
\newcommand{\floor}[1]{\lfloor{#1}\rfloor}
\renewcommand{\ge}{\geqslant}
\renewcommand{\geq}{\geqslant}
\renewcommand{\leq}{\leqslant}
\newtheorem{theorem}{Theorem}[section]
\newaliascnt{lemma}{theorem}
\newtheorem{lemma}[lemma]{Lemma}
\newaliascnt{corollary}{theorem}
\newtheorem{corollary}[corollary]{Corollary}
\newaliascnt{conjecture}{theorem}
\newtheorem{conjecture}[conjecture]{Conjecture}
\newaliascnt{open}{theorem}
\newtheorem{open}[open]{Open Problem}
\newaliascnt{claim}{theorem}
\newtheorem{claim}[claim]{Claim}
\DeclareMathOperator{\bd}{bd}
\DeclareMathOperator{\sep}{sep}
\begin{document}

\title{Subgraph Densities in a Surface}

\author[T.~Huynh]{Tony Huynh}
\author[G.~Joret]{Gwena\"el Joret}
\author[D.R.~Wood]{David R. Wood}
\address[T.~Huynh and D.R.~Wood]{\newline School of Mathematics
\newline Monash University
\newline Melbourne, Australia}
\email{\{tony.bourbaki@gmail.com, david.wood@monash.edu\}}

\address[G.~Joret]{\newline D\'epartement d'Informatique
\newline Universit\'e libre de Bruxelles
\newline Brussels, Belgium}
\email{gjoret@ulb.ac.be}

\thanks{All three authors are supported by the Australian Research Council. G.\ Joret is supported by an ARC grant from the Wallonia-Brussels Federation of Belgium and a CDR grant from the National Fund for Scientific Research (FNRS)}

\date{\today}
\sloppy

\begin{abstract}
Given a fixed graph $H$ that embeds in a surface $\Sigma$, what is the maximum number of copies of $H$ in an $n$-vertex graph $G$ that embeds in $\Sigma$? We show that the answer is $\Theta(n^{f(H)})$, where $f(H)$ is a graph invariant called the `flap-number' of $H$, which is independent of $\Sigma$. This simultaneously answers two open problems posed by Eppstein (1993).  The same proof also answers the question for minor-closed classes.  That is, if $H$ is a $K_{3,t}$ minor-free graph, then the maximum number of copies of $H$ in an $n$-vertex $K_{3,t}$ minor-free graph $G$ is $\Theta(n^{f'(H)})$, where $f'(H)$ is a graph invariant closely related to the flap-number of $H$.
Finally, when $H$ is a complete graph we give more precise answers. 
\end{abstract}

\maketitle 


\section{Introduction}
 
 All graphs in this paper are undirected, finite, and simple, unless stated otherwise.  Many classical theorems in extremal graph theory concern the maximum number of copies of a fixed graph $H$ in an $n$-vertex graph in some class $\mathcal{G}$. Here, a \defn{copy} means a subgraph isomorphic to $H$. For example, Tur\'an's Theorem determines the maximum number of copies of $K_2$ (that is, edges) in an $n$-vertex $K_t$-free graph~\citep{Turan41}. More generally, Zykov's Theorem determines the maximum number of copies of a given complete graph $K_s$ in an $n$-vertex $K_t$-free graph~\citep{Zykov49}. The excluded graph need not be complete. The Erd\H{o}s--Stone Theorem~\citep{ES46}  determines, for every non-bipartite graph $X$, the asymptotic maximum number of copies of $K_2$ in an $n$-vertex graph with no $X$-subgraph. Analogues of the Erd\H{o}s--Stone Theorem for copies of $K_s$ have recently been studied by \citet*{AS19,AS16}. See~\citep{MaQiu20,AKS18,Timmons19,GMV19,GP19,GSTZ19,EMSG19,GKPP18,Luo18,NesOss11,GNV20} for recent related results.
 
This paper studies similar questions when the class $\mathcal{G}$ consists of the graphs that embed\footnote{See~\citep{MoharThom} for background about graphs embedded in surfaces. For $h\geq 0$, let $\mathbb{S}_h$ be the sphere with $h$ handles. For  $c\geq 0$, let $\mathbb{N}_c$ be the sphere with $c$ cross-caps. Every surface is homeomorphic to $\mathbb{S}_h$ or $\mathbb{N}_c$. The \defn{Euler genus} of $\mathbb{S}_h$ is $2h$. The \defn{Euler genus} of $\mathbb{N}_c$ is $c$. The \defn{Euler genus} of a graph $G$ is the minimum Euler genus of a surface in which $G$ embeds with no crossings. A graph $H$ is a \defn{minor} of a graph $G$ if a graph isomorphic to $H$ can be obtained from a subgraph of $G$ by contracting edges. If $G$ embeds in a surface $\Sigma$, then every minor of $G$ also embeds in $\Sigma$. } in a given surface $\Sigma$ (rather than being defined by an excluded subgraph). For graphs $H$ and $G$, let $C(H,G)$ be the number of copies of $H$ in $G$. For a  surface $\Sigma$, let $C(H,\Sigma,n)$ be the maximum of $C(H,G)$, where the maximum is taken over all $n$-vertex graphs $G$ that embeds in $\Sigma$. This paper determines the asymptotic behaviour of $C(H,\Sigma,n)$ as $n\rightarrow\infty$ for any fixed surface $\Sigma$ and any fixed graph $H$. 

Before stating our theorem, we mention some related results that determine $C(H,\mathbb{S}_0,n)$ for specific planar graphs $H$ where the surface is the sphere $\mathbb{S}_0$.  \citet*{AC84} determined  $C(H,\mathbb{S}_0,n)$ precisely if $H$ is either a complete bipartite graph or a triangulation without non-facial triangles. \citet*{HS79} studied $C(C_k,\mathbb{S}_0,n)$ where $C_k$ is the $k$-vertex cycle; they proved that $C(C_3,\mathbb{S}_0,n)=3n-8$ and $C(C_4,\mathbb{S}_0,n)=\frac12(n^2+3n-22)$. See~\citep{HS82,HHS01} for more results on  $C(C_3,\mathbb{S}_0,n)$ and see~\citep{Alameddine80} for more results on  $C(C_4,\mathbb{S}_0,n)$. \citet*{GPSTZa} proved that  $C(C_5,\mathbb{S}_0,n)=2n^2-10n+12$ (except for $n\in\{5,7\}$). If $P_k$ is the $k$-vertex path, then 
$C(P_4,\mathbb{S}_0,n)=7n^2-32n+27$ with finitely many exceptions~\citep{GPSTZb} and
$C(P_5,\mathbb{S}_0,n)=n^3+O(n^2)$~\citep{GGMPSXZ21}. \citet*{AC84} and independently \citet*{Wood-GC07} proved that $C(K_4,\mathbb{S}_0,n)=n-3$. More generally, Perles (see \citep{AC84}) conjectured that if $H$ is a fixed 3-connected planar graph, then $C(H,\mathbb{S}_0,n) = O(n)$. Perles noted the converse: If $H$ is planar, not 3-connected and $|V(H)|\geq 4$, then $C(H,\mathbb{S}_0,n) \geq \Omega(n^2)$.  Perles' conjecture was proved by \citet*{Wormald86} and independently by \citet*{Eppstein93}, who asked the following two open problems: 
\begin{itemize}
    \item Characterise the subgraphs occurring $O(n)$ times in graphs of given genus. 
    \item Characterise the subgraphs occurring a number of times which is a nonlinear function of $n$.
\end{itemize}
This paper answers both these questions (and more).




We start with the following natural question: when is $C(H,\Sigma,n)$ bounded by a constant depending only on $H$ and $\Sigma$ (and independent of $n$)? We prove that $H$ being 3-connected and non-planar is a sufficient condition. In fact we prove a stronger result that completely answers the question. We need the following standard definitions. A \defn{$k$-separation} of a graph $H$ is a pair $(H_1, H_2)$ of edge-disjoint subgraphs of $H$ such that $H_1 \cup H_2=H$, $V(H_1) \setminus V(H_2) \neq \emptyset$, $V(H_2) \setminus V(H_1) \neq \emptyset$, and $|V(H_1 \cap H_2)|=k$. A $k'$-separation for some $k'\leq k$ is called a \defn{$(\leq k)$-separation}. If $(H_1,H_2)$ is a separation of $H$ with $X=V(H_1)\cap V(H_2)$, then let $H_i^-$ and $H_i^+$ be the simple graphs obtained from $H_i$ by removing and adding all edges between vertices in $X$, respectively.  

A graph $H$ is \defn{strongly non-planar} if $H$ is non-planar and for every $(\leq 2)$-separation $(H_1, H_2)$ of $H$, both $H_1^+$ and $H_2^+$ are non-planar.  Note that every 3-connected non-planar graph is strongly non-planar.  The following is our first contribution. It says that $C(H,\Sigma,n)$ is bounded if and only if $H$ is strongly non-planar.

\begin{theorem}
\label{StronglyNonPlanar}
There exists a function $c_{\ref{StronglyNonPlanar}}(h, g)$ such that for every strongly non-planar graph $H$ with $h$ vertices and every surface $\Sigma$ of Euler genus $g$, 
\begin{equation*}
    C(H, \Sigma, n) \leq c_{\ref{StronglyNonPlanar}}(h, g).
\end{equation*}
Conversely, for every graph $H$ that is not strongly non-planar and for every surface $\Sigma$ in which $H$ embeds, there is a constant $c>0$ such that for all $n\geq 4|V(H)|$, there is an $n$-vertex  graph that embeds in $\Sigma$ and contains at least $cn$ copies of $H$; that is, $C(H,\Sigma,n)\geq cn$. 
\end{theorem}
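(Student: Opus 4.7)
Assume without loss of generality that $G$ and $H$ are 2-connected; 1-separations are handled in parallel via the $H^+$ convention. The plan is to use the Tutte (2-sum) decomposition of $G$ as a tree $T$ of ``bags'' $B_t$, each a 3-connected graph, cycle, or bond, with adjacent bags sharing exactly $2$ vertices. By the additivity of Euler genus over 1- and 2-sums (following Battle--Harary--Kodama--Youngs and Miller, with the standard non-orientable analogue), the Euler genera of the bags sum to at most $g$, so at most $g$ bags are non-planar. For a copy $H' \cong H$ in $G$, let $T_{H'} \subseteq T$ be the minimal subtree whose bags cover $V(H')$. Each edge of $T_{H'}$ is a 2-separation of $G$ which restricts to a $(\le 2)$-separation of $H'$, and strong non-planarity of $H$ forces both augmented sides to be non-planar. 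Splitting $T$ along all edges of $T_{H'}$ produces pieces each of Euler genus $\ge 1$, so an inductive application of genus additivity bounds $|V(T_{H'})|$ by a function of $g$; moreover only finitely many ``shapes'' of $T_{H'}$ are possible, determined by its placement among the $\le g$ non-planar bags.

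\textbf{Main obstacle.} The crux is the final step: for a fixed $T_{H'}$, bound the number of copies of $H$ whose vertex set lies in $\bigcup_{t \in T_{H'}} V(B_t)$. Each bag may be arbitrarily large, so a naive count fails. I plan to exploit the rigidity of 3-connected embeddings in surfaces: Robertson--Vitray-style theorems give that a 3-connected bag $B_t$ has only boundedly many combinatorial embeddings in $\Sigma$, and the restriction of $H'$ to $B_t$---containing at most $h$ vertices and including the separation vertices of the adjacent $T_{H'}$-edges---must realize a fixed 3-connected chunk of $H$ via the induced embedding. A careful local count bounds the placements by a function of $h$ and $g$; multiplying across bags and shapes yields the required bound $c_{\ref{StronglyNonPlanar}}(h,g)$.

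\textbf{Lower bound.} I plan an explicit construction. If $H$ is planar, take $\lfloor n/|V(H)| \rfloor$ vertex-disjoint copies of $H$ padded with isolated vertices; this planar graph embeds in $\Sigma$ and contains $\Omega(n)$ copies. Otherwise $H$ has a $(\le 2)$-separation $(H_1,H_2)$ with $H_1^+$ planar and separator $S \subseteq V(H)$. Since $H$ embeds in $\Sigma$, so does $H_2^+$: delete the interior of $H_1$ from an embedding of $H$, and if $|S|=2$ route a virtual edge through the vacated region. In this embedding the vertices of $S$ lie on a common face $F$. Fix one embedded copy of $H_2$ in $\Sigma$ and attach $\Omega(n)$ pairwise interior-disjoint copies of $H_1$ to $S$ inside $F$, which is possible because $H_1^+$ is planar. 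Each attached copy of $H_1$ together with the fixed $H_2$ yields a distinct copy of $H$, so $C(H,\Sigma,n)=\Omega(n)$.
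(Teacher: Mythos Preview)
Your lower bound construction is correct and is essentially what the paper does.

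The upper bound sketch has two genuine gaps. First, the assertion that splitting $T$ along all edges of $T_{H'}$ yields pieces each of Euler genus $\geq 1$ is false. Strong non-planarity guarantees that for any \emph{single} edge of $T_{H'}$ both \emph{sides} become non-planar after adding the virtual edge; it says nothing about the individual piece sandwiched between two consecutive $T_{H'}$-edges. For instance, take $H$ to be two copies of $K_5$ glued to opposite edges of a $4$-cycle: one checks that $H$ is strongly non-planar, yet its own SPQR tree has a planar $S$-node in the middle, so already with $G=H$ an internal piece is planar. There is also a mismatch you elide: additivity (\cref{Additivity}) applies to $G_1,G_2$ \emph{without} the virtual edge, while strong non-planarity only gives non-planarity of $H_i^{+}$ \emph{with} it; the paper bridges this gap in its own proof by sacrificing one extra piece to manufacture the edge, which works only because all the pieces there share the same separating pair---yours do not.

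Second, and more seriously, the ``main obstacle'' you identify is the entire content of the theorem, and your proposed resolution does not work. A $3$-connected bag $B_t$ of bounded Euler genus can be arbitrarily large; Robertson--Vitray rigidity bounds the number of embeddings of $B_t$ in $\Sigma$, not the number of $h$-vertex subgraphs of $B_t$ isomorphic to a given chunk of $H$ with prescribed attachment vertices. Nothing in your outline explains why that count is $O_{h,g}(1)$ rather than growing with $|V(B_t)|$. The paper avoids decomposing $G$ altogether: if there were too many copies of $H$, the Sunflower Lemma produces many copies pairwise meeting in the same kernel $R$, the components of these copies outside $R$ are then disjoint, and a short case analysis on how many neighbours each component has in $R$ (additivity for $\leq 2$ neighbours, a $K_{3,2g+3}$-minor for $\geq 3$) forces the Euler genus above $g$.
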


There are two important observations about \cref{StronglyNonPlanar}. First, the characterisation of graphs $H$ does not depend on the surface $\Sigma$. Indeed, the only dependence on $\Sigma$ is in the constants. Second, \cref{StronglyNonPlanar} shows that $C(H,\Sigma,n)$ is either bounded or $\Omega(n)$. 

\cref{StronglyNonPlanar} is in fact a special case of the following more general theorem. The next definition is a key to describing our results. A \defn{flap} in a graph $H$ is a $(\leq 2)$-separation $(A,B)$ such that $A^+$ is planar.  Separations $(A,B)$ and $(C,D)$ of  $H$ are \defn{independent} if $E(A^-) \cap E(C^-) = \emptyset$ and $(V(A) \setminus V(B)) \cap (V(C) \setminus V(D))=\emptyset$.\footnote{It is worth noticing that neither condition implies the other. If $G$ is a $4$-cycle $abcd$, then the two $2$-separations $(A,B)$ and $(C,D)$ obtained by considering respectively the cutsets $\{b,d\}$ and $\{a,c\}$ satisfy the second condition but not the first. 
If $G$ consists of $5$ non-adjacent vertices $a, b, c, d, e$ then the two $2$-separations $(A,B)$ and $(C,D)$ with $V(A)=\{a, b, c, e\}$, $V(B)=\{b, c, d\}$, $V(C)=\{b, c, d, e\}$, $V(D)=\{a, b, c\}$  satisfy the first condition but not the second.} 
If $H$ is planar and with no $(\leq 2)$-separation, then the \defn{flap-number} of $H$ is defined to be 1. Otherwise, the \defn{flap-number} of $H$ is defined to be the maximum number of pairwise independent flaps in $H$.  Let $f(H)$ denote the flap-number of $H$.

The following is our main theorem. 

\begin{theorem}
\label{Main}
For every graph $H$ and every surface $\Sigma$ in which $H$ embeds, 
\begin{equation*}
    C(H,\Sigma,n) = \Theta( n^{f(H)} ).
\end{equation*}
\end{theorem}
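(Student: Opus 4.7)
The plan is to prove matching upper and lower bounds $C(H, \Sigma, n) = O(n^{f(H)})$ and $C(H, \Sigma, n) = \Omega(n^{f(H)})$.

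For the \emph{lower bound}, I would construct an explicit $n$-vertex graph $G_n$ embeddable in $\Sigma$ with $\Omega(n^{f(H)})$ copies of $H$. Assume $f := f(H) \geq 1$; otherwise the statement is trivial since $H$ itself provides one copy. Let $(A_1, B_1), \ldots, (A_f, B_f)$ be a maximum family of pairwise independent flaps of $H$, with interiors $R_i := V(A_i) \setminus V(B_i)$, and set $K := H - \bigcup_i R_i$. The construction begins with a fixed embedding of $K$ in $\Sigma$ (obtained from any embedding of $H$ in $\Sigma$ by deletion, arranged so that each separator $V(A_i) \cap V(B_i)$ lies on a common face). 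For each $i$, attach roughly $n/f$ vertex-disjoint copies of $A_i^+$ along the separator pair, all drawn inside that face; planarity of $A_i^+$ together with the $(\leq 2)$-vertex attachment preserves embeddability in $\Sigma$. By independence of the chosen flaps (disjoint interiors and edges), any choice of one copy per flap determines a distinct subgraph isomorphic to $H$, giving $\Omega(n^f)$ copies.

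For the \emph{upper bound}, I induct on $|V(H)|$. If $H$ has no $(\leq 2)$-separation, then either $H$ is non-planar, hence (vacuously) strongly non-planar, so \cref{StronglyNonPlanar} gives $C(H, \Sigma, n) = O(1) = O(n^{f(H)})$; or $H$ is planar and essentially $3$-connected, so $f(H) = 1$, and the essentially unique Whitney embedding of $H$ combined with the face count $|F(G)| = O_g(n)$ from Euler's formula yields $O(n)$ copies. For the inductive step, pick a $(\leq 2)$-separation $(A, B)$ of $H$. A copy of $H$ in $G$ is a pair consisting of a copy of $A$ and a copy of $B$ with matching images on the separator $V(A) \cap V(B)$. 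Conditioning on these images and applying the inductive hypothesis to the smaller graphs $A^+$ and $B^+$ gives a bound of the form
\[
C(H, \Sigma, n) \;\leq\; O\!\left(n^{f(A^+) + f(B^+)}\right).
\]
Combining this with an appropriate choice of $(A,B)$ and the combinatorial identity $f(A^+) + f(B^+) \leq f(H) + \alpha$, with the slack $\alpha$ absorbed by conditioning on separator vertices rather than choosing them freely, yields the required bound.

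The \emph{main obstacle} is this flap-number additivity under $(\leq 2)$-separation. A single separator vertex may serve as an attachment on both the $A$-side and the $B$-side, so naively adding flap-numbers across the separation over-counts. I would resolve this by choosing $(A, B)$ so that one side is atomic---either a single planar flap, or a $3$-connected non-planar piece---aligning the recursive flap families with a maximum independent family in $H$, and carefully tracking how the separator vertices are shared between the two sides in the counting. A secondary technicality is converting the $O(|F(G)|)$ face bound into an $O(n)$ bound on unlabelled subgraph copies in the planar base case, which is handled by a standard incidence/automorphism argument for $H$.
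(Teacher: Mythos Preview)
Your lower bound is essentially the paper's (the paper makes the cofacial-separator step precise by first passing to the minor $H'$ of $H$ obtained by deleting each flap interior and adding the edge $v_iw_i$; since $H'$ is a minor of $H$ it embeds in $\Sigma$, and the edge $v_iw_i$ guarantees the pair is cofacial). The upper-bound skeleton is also right---induct by splitting along a $(\leq 2)$-separation---but there is a real gap in the inductive step.

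The gap is the rooted count. Your induction hypothesis bounds \emph{unrooted} copies of $A^+$ and $B^+$; it says nothing about the number of copies with the separator image prescribed, which is what the recursion actually needs. You flag this and propose to absorb the discrepancy via an inequality $f(A^+)+f(B^+)\le f(H)+\alpha$, but even when such an inequality holds you still need a \emph{rooted} bound on one side, and that does not follow from the unrooted hypothesis. The paper resolves this not by a general additivity inequality but by a very specific choice of separation: take $(A_1,B_1)$ to be a \emph{maximal} flap. Maximality forces $B_1^+$ to have flap-number at most $f(H)-1$ (the flap reduction lemma, \cref{lem:flapreduction}), and---crucially---forces $A_1^+$ to have no two independent flaps whose $B$-sides both contain the separator $X$. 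The paper then proves a standalone rooted lemma (\cref{lem:rootedclique}): any planar $J$ with this ``at most one flap avoids $X$'' property has at most $O(n)$ copies in $G$ with $X$ fixed at a given clique. This lemma is the heart of the argument and requires an SPQRK-tree decomposition of $J$ together with a sunflower/$K_{3,2g+3}$ argument; it is not a consequence of the unrooted bound and is substantially harder than what your proposal sketches. (Separately, your planar base case via Whitney uniqueness only handles planar hosts $G$; for general $\Sigma$ the paper invokes Eppstein's \cref{EppsteinCor}.)
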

    
It is immediate from the definitions that $f(H)=0$ if and only if $H$ is strongly non-planar. So \cref{StronglyNonPlanar} follows from the $f(H) \leq 1$ cases of \cref{Main}. 

As an aside, note that \cref{Main} can be restated as follows: for every graph $H$ and every surface $\Sigma$ in which $H$ embeds, 
\begin{equation*}
    \lim_{n\to\infty} \frac{ \log C(H,\Sigma,n)}{ \log n} = f(H) .
\end{equation*}

The above limit is sometimes referred to as the \defn{asymptotic logarithmic density} of $H$ in $\Sigma$.  A related result of \citet*{NesOss11} shows that for every infinite nowhere dense hereditary graph class $\mathcal{G}$ and for every fixed graph $H$, the maximum, taken over all $n$-vertex graphs $G\in\mathcal{G}$, of the number of induced subgraphs of $G$ isomorphic to $H$ is $\Omega( n^{\beta})$ and $O( n^{\beta+o(1)})$ for some integer $\beta\leq f(H)$. Our results (in the case that $\mathcal{G}$ is the class of graphs embeddable in a fixed surface) imply this upper bound  (since the number of induced copies of $H$ in $G$ is at most $C(H,G)$). Moreover, our bounds are often more precise since $f(H)$ can be significantly less than $\beta$. 

The lower bound in \cref{Main} is proved in  \cref{LowerBound}. 
\cref{Tools} introduces some tools from the literature that are used in the proof of the upper bound. \cref{StronglyNonPlanar} is proved in \cref{BoundedNumberCopies}. The upper bound in \cref{Main} is then proved in \cref{MainProof}. \cref{CompleteGraphs} presents more precise bounds on $C(H,\Sigma,n)$ when $H$ is a complete graph $K_s$. \cref{MinorClosedClasses} considers the maximum number of copies of a graph $H$ in an $n$-vertex graph in a given minor-closed class. \cref{Homomorphism} reinterprets our results in terms of homomorphism inequalities, and presents some open problems that arise from this viewpoint. 

Before continuing, to give the reader some more intuition about \cref{Main}, we now asymptotically determine $C(T,\Sigma,n)$ for a tree $T$. 

\begin{corollary}
\label{TreeCopies}
For every fixed tree $T$, let $\beta(T)$ be the size of a maximum stable set in the subforest $F$ of $T$ induced by the vertices with degree at most $2$. Then for every fixed surface $\Sigma$, 
\begin{equation*}
    C(T,\Sigma,n) = \Theta( n^{\,\beta(T)} ).
\end{equation*}
\end{corollary}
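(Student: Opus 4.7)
The plan is to derive \cref{TreeCopies} from \cref{Main} by establishing the identity $f(T)=\beta(T)$ for every tree $T$; this turns the asymptotic $C(T,\Sigma,n)=\Theta(n^{f(T)})$ into the claimed $\Theta(n^{\beta(T)})$. The tiny cases $|V(T)|\le 2$ are handled directly from the definition: both $K_1$ and $K_2$ have no $(\le 2)$-separation and satisfy $\beta=f=1$, so I assume $|V(T)|\ge 3$ below.

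For the lower bound $f(T)\ge\beta(T)$, fix a maximum stable set $S$ in $F$. For each $v\in S$ define a flap $(A_v,B_v)$ by letting $A_v$ consist of the (at most two) edges from $v$ to $N_T(v)$ and letting $B_v$ hold the remaining edges of $T$. Then $|V(A_v)\cap V(B_v)|=\deg_T(v)\le 2$ and $A_v^+$ has at most three vertices, hence is planar, so $(A_v,B_v)$ is a flap (a trivial check shows the ``other side'' is non-empty for any such $v$, with $|V(T)|\ge 3$ ruling out the sole problematic case $T=P_3$ with $v$ the middle vertex, since that vertex is never in a maximum stable set of $F$). For distinct $v,v'\in S$ the small sides $P_v=\{v\}$ and $P_{v'}=\{v'\}$ are disjoint, and as $E(A_v^-)$ consists of the $T$-edges incident to $v$, the remaining independence condition amounts to $vv'\notin E(T)$, which holds because $S$ is stable in $F$.

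For the upper bound $f(T)\le\beta(T)$, consider pairwise independent flaps $(A_1,B_1),\ldots,(A_k,B_k)$ and set $X_i=V(A_i)\cap V(B_i)$ and $P_i=V(A_i)\setminus V(B_i)$. I would first unpack the definitions in the tree setting to see that $E(A_i^-)$ equals the set of $T$-edges incident to $P_i$, so pairwise independence simplifies to ``the $P_i$ are pairwise disjoint and no $T$-edge joins different $P_i$'s''. The main step is to extract from each $P_i$ a vertex $v_i$ with $\deg_T(v_i)\le 2$: pick a connected component $C$ of $T-X_i$ contained in $P_i$ (one exists because $P_i$ is a union of components of $T-X_i$), observe that each $x\in X_i$ has at most one neighbour in $C$ (else $T$ contains a cycle) so $C$ has at most $|X_i|\le 2$ boundary vertices, and run a short case analysis on $|C|$ and on whether the leaves of $C$ are boundary to produce the desired vertex. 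The resulting $v_1,\ldots,v_k$ are pairwise distinct, lie in $V(F)$, and are pairwise non-adjacent in $T$, hence form a stable set in $F$ of size $k$, giving $k\le\beta(T)$. The main obstacle is this extraction step: the condition $|X_i|\le 2$ is what forces each component of $T-X_i$ to contain a genuine low-$T$-degree vertex, either a non-boundary leaf of $C$ or an interior vertex of a path.
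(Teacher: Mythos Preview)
Your argument is correct and follows the paper's strategy: reduce to $f(T)=\beta(T)$, and for $\beta(T)\le f(T)$ build one flap per vertex of a maximum stable set in $F$, exactly as the paper does (your explicit handling of $|V(T)|\le 2$ and of the middle vertex of $P_3$ is a welcome addition). For the converse $f(T)\le\beta(T)$ the paper takes a slightly different tack: it chooses the independent flaps to minimise $\sum_i |V(A_i)|$, which forces each $P_i$ to be a single vertex $v_i$ with $N_T(v_i)=X_i$, whereas you work with arbitrary flaps and extract a degree-$\le 2$ vertex from a component of $T-X_i$ inside $P_i$; both are valid, and your extraction argument in effect supplies the ``simple case-analysis'' the paper leaves implicit.
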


\begin{proof}
By \cref{Main}, it suffices to show that $\beta(T)=f(T)$. 

Let $I=\{v_1,\dots,v_{\beta(T)}\}$ be a maximum stable set in $F$. Let $x_i$ (and possibly $y_i$) be the neighbours of $v_i$. 
Let $A_i:=T[\{v_i,x_i,y_i\}]$ and $B_i:=T-v_i$. Then $(A_i,B_i)$ is a flap of $T$. Since $I$ is a stable set, for each $v_i\in I$ neither $x_i$ nor $y_i$ are in $I$, implying that $E(A_i^-)\cap E(A_j^-)=\emptyset$ for distinct $i,j\in [\beta(T)]$.  Moreover, $V(A_i) \setminus V(B_i)=\{v_i\}$, so  $(V(A_i) \setminus V(B_i)) \cap (V(A_j) \setminus V(B_j))=\emptyset$ for all distinct $i,j$. Hence $(A_1,B_1),\dots,(A_{\beta(T)},B_{\beta(T)})$ are pairwise independent flaps in $T$. Thus $\beta(T) \leq f(T)$. \cref{Main} then implies that 
$C(T,\Sigma,n) = \Omega( n^{\,\beta(T)} )$. This lower bound is particularly easy to see when $T$ is a tree. Let $G$ be the graph obtained from $T$ by replacing each vertex $v_i \in I$ by $\floor{\frac{n-|V(T)|}{\beta(T)}}$ vertices with the same neighbourhood as $v_i$, as illustrated in \cref{fig:TreeCopies}. Then $G$ is planar with at most $n$ vertices and at least $(\frac{n-|V(T)|}{\beta(T)})^{\beta(T)}$ copies of $T$. Thus $C(T,\Sigma,n) \geq C(T,\mathbb{S}_0,n) = \Omega( n^{\beta(T)} )$ for fixed $T$. 

For the converse, let $(A_1,B_1),\dots,(A_{f(T)},B_{f(T)})$ be pairwise independent flaps in $T$. Choose $(A_1,B_1),\dots,(A_{f(T)},B_{f(T)})$ to minimise $\sum_{i=1}^{f(T)} |V(A_i)|$. A simple case-analysis shows that $|V(A_i)\setminus V(B_i)|=1$, and if $v_i$ is the vertex in $V(A_i)\setminus V(B_i)$, then $N(v_i)= V(A_i)\cap V(B_i)$, implying $v_i$ has degree 1 or 2 in $T$. Moreover, $v_iv_j\not\in E(T)$ for distinct $i,j\in[f(T)]$ as otherwise $E(A_i^-)\cap E(A_j^-)\neq\emptyset$. Hence $\{v_1,\dots,v_{f(T)}\}$ is a stable set of vertices in $T$ all with degree at most 2. Hence $\beta(T)\geq f(T)$. 
\end{proof}

\begin{figure}[!ht]
\centering
\includegraphics{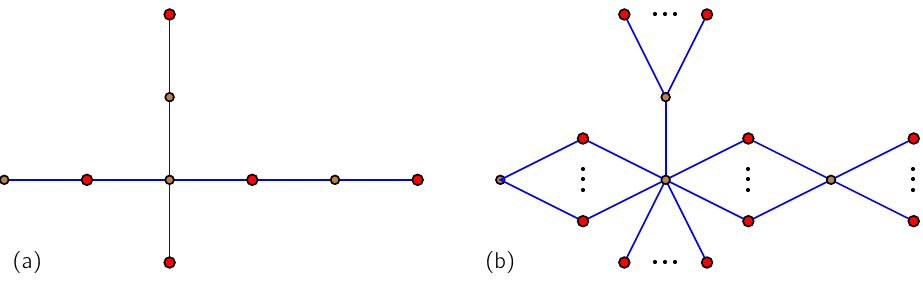}
\caption{(a) A tree $T$ with $\beta(T)=5$. (b) A planar graph with $\Omega(n^5)$ copies of $T$.
\label{fig:TreeCopies}}
\end{figure}





\section{Lower Bound}
\label{LowerBound}

Now we prove the lower bound in \cref{Main}. Let $H$ be an $h$-vertex graph with flap-number $k$. Let $\Sigma$ be a surface in which $H$ embeds. Our goal is to show that $C(H,\Sigma,n) = \Omega( n^k )$ for all $n\geq 4|V(H)|$.  If $k=0$, then there is nothing to prove.  If $H$ is planar and $3$-connected, then we may take $\lfloor n/h \rfloor$ disjoint copies of $H$.  Thus, we may assume that $H$ has at least one flap. Let $(A_1,B_1),\dots,(A_k,B_k)$ be pairwise independent flaps in $H$. If $(A_i,B_i)$ is a 1-separation, then let $v_i$ be the vertex in $A_i\cap B_i$. If $(A_i,B_i)$ is a 2-separation, then let $v_i$ and $w_i$ be the two vertices in $A_i\cap B_i$. 
Let $H'$ be obtained from $H$ as follows: if $(A_i,B_i)$ is a 2-separation, then delete $A_i-V(B_i)$ from $H$, and add the edge $v_iw_i$ (if it does not already exist). Note that $H'$ is a minor of $H$, since we may assume that whenever $(A_i,B_i)$ is a 2-separation, there is a $v_iw_i$-path in $A_i$ (otherwise $(A_i,B_i)$ can be replaced by a $(\leq 1)$-separation). 
Since $H$ embeds in $\Sigma$, so does $H'$. By assumption, $A_i^+$ is planar for each $i$. Fix an embedding of $A_i^+$ with $v_i$ and $w_i$ (if it exists) on the outerface (which exists since $v_iw_i$ is an edge of $A_i^+$ in the case of a 2-separation). Let $q:= \floor{ \frac{n}{|V(H)|}-1}$ and  $G$ be the graph obtained from an embedding of $H'$ in $\Sigma$ by adding $q$ disjoint copies of $A_i^+$ (if $(A_i, B_i)$ is a 0-separation), pasting 
$q$ copies of $A_i^+$ onto $v_i$ (if $(A_i,B_i)$ is a 1-separation), and pasting $q$ copies of $A_i^+$ onto  $v_iw_i$ (if $(A_i,B_i)$ is a 2-separation). These copies of $A_i^+$ can be embedded into a face of $H'$, as illustrated in \cref{QuadraticCopies}. 

Since $(V(A_i)\setminus V(B_i)) \cap (V(A_j)\setminus V(B_j)) = \emptyset$ for distinct $i,j\in[k]$, 
$$|V(G)| 
= |V(H)| + q \sum_i|V(A_i) \setminus V(B_i) |
\leq (q+1) |V(H)|
\leq n.$$
By construction, $G$ has at least $q^k  \geq ( \frac{n}{|V(H)|}-2 )^k $ copies of $H$. 
Hence $C(H,\Sigma,n) = \Omega(n^k)$.



\begin{figure}[!ht]
\centering
\includegraphics{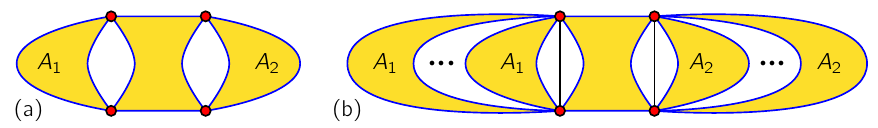}
\caption{(a) A graph $H$ with flap-number 2. (b) A graph with $\Omega(n^2)$ copies of $H$.
\label{QuadraticCopies}}
\end{figure}

\section{Tools}
\label{Tools}


In Sections~\ref{Tools}--\ref{MainProof} of this paper we work in the following setting. For graphs $G$ and $H$, an  \defn{image} of $H$ in $G$ is an injection $\phi: V(H) \to V(G)$ such that $\phi(u)\phi(v) \in E(G)$ for all $uv \in E(H)$.  Let  $I(H,G)$ be the number of images of $H$ in $G$, and let $I(H,\Sigma,n)$ be the maximum of $I(H,G)$ taken over all $n$-vertex graphs $G$ that embed in $\Sigma$. If $H$ is fixed then $C(H,G)$ and $I(H,G)$ differ by a constant factor. In particular, if $|V(H)|=h$ then 
\begin{align*}
C(H,G)& \leq I(H,G) \leq h!\, C(H,G).\\
C(H,\Sigma,n)& \leq I(H,\Sigma,n) \leq h!\, C(H,\Sigma,n).
\end{align*}
So to prove our main theorems, it suffices to work with images rather than copies. 

To prove the upper bound in \cref{Main} we  need several tools from the literature. The first two were proved by \citet*{Eppstein93}. To state the first result we need the following definition.  
A collection $\mathcal{H}$ of images of $H$ in $G$ is \defn{coherent} if for all images $\phi_1, \phi_2 \in \mathcal{H}$ and for all distinct vertices $x,y\in V(H)$, we have $\phi_1(x) \neq \phi_2(y)$.

\begin{lemma}[\citep{Eppstein93}] \label{coherence}
Let $H$ be a graph with $h$ vertices and $G$ be a graph.   Every collection of at least $
c_{\ref{coherence}}(h,t):=h!^2 t^h$ images of $H$ in $G$ contains a coherent subcollection of size at least $t$. 
\end{lemma}





\begin{theorem}[\citep{Eppstein93}] \label{EppsteinCor}
There exists a function $c_{\ref{EppsteinCor}}(h,g)$ such that for every planar graph $H$ with $h$ vertices and no $(\leq 2)$-separation, and every surface $\Sigma$ of Euler genus $g$, 
\[
I(H,\Sigma, n) \leq c_{\ref{EppsteinCor}}(h,g) n.
\]
\end{theorem}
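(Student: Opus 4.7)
My plan is to reduce to the case $|V(H)| \geq 4$, in which the hypothesis that $H$ is planar with no $(\leq 2)$-separation forces $H$ to be $3$-connected and planar; the small cases $|V(H)| \leq 3$ are immediate since Euler's formula for $\Sigma$ gives $|E(G)| \leq 3|V(G)| + 3g - 6$, which linearly bounds the number of copies of $K_1$, $K_2$, $P_3$, or $K_3$ in $G$. For the main case, the central tool is Whitney's theorem, which says that a $3$-connected planar graph has an essentially unique embedding in the sphere: both the cyclic order of edges around each vertex and the set of facial cycles of $H$ are determined up to a global reflection.

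Now fix an embedding of $G$ in $\Sigma$. For each copy $H' \subseteq G$, the embedding of $G$ induces a $2$-cell embedding of $H'$ in a surface of Euler genus at most $g$. A standard extension of Whitney's theorem to bounded-genus surfaces implies that a $3$-connected graph has at most $N(h,g)$ distinct such embeddings, so it suffices to bound, for each combinatorial ``embedding type'' $\tau$ of $H$, the number of type-$\tau$ copies in $G$ linearly in $n$. I would handle each type by a rooting-and-tracing argument: since $H$ is planar, Euler's formula produces a vertex $v_0 \in V(H)$ of degree $d_0 \leq 5$; a \emph{rooted copy} is a copy $\phi \colon V(H) \to V(G)$ together with $u := \phi(v_0)$ and the cyclic sequence of $d_0$ incident edges at $u$ prescribed by $\tau$. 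There are only $O(|E(G)|) = O(n)$ such seeds $(u; e_1, \ldots, e_{d_0})$, so it is enough to show that each seed extends to at most $c(h,g)$ full copies of type $\tau$: starting from $(u; e_1, \ldots, e_{d_0})$, one successively traces the facial walks of $\tau$ in $G$ using its rotation system, and at each step $3$-connectivity of $H$ together with the type $\tau$ leave only $O_g(1)$ choices for the next vertex.

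The main obstacle is justifying this tracing claim, i.e.\ that each extension step has only $O_g(1)$ ambiguity. The intuition is that a deviation from a uniquely-determined facial walk in $G$ must ``use up'' a topological feature of $\Sigma$ (a handle or crosscap), of which there are only $g$. Making this precise requires an inductive argument on the partially-built copy of $H$, invoking Whitney's rigidity at each step and bounding the total number of deviations by a function of $h$ and $g$ alone.
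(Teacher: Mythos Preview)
The paper does not prove this theorem; it is quoted from \citet{Eppstein93} as a black-box tool in \cref{Tools}, so there is no in-paper argument to compare against. Your overall strategy---dispose of $|V(H)|\le 3$ directly, reduce to $3$-connected planar $H$ for $|V(H)|\ge 4$, invoke the bounded number of $2$-cell embeddings of a $3$-connected graph in a surface of Euler genus at most $g$, then root at a bounded-degree vertex and trace---is in the right spirit and close to how Wormald and Eppstein argue.

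That said, there is a concrete gap in the seed count. You claim there are $O(|E(G)|)$ seeds $(u;e_1,\dots,e_{d_0})$, but the $d_0$ edges of the copy incident to $u$ need not be \emph{consecutive} in $G$'s rotation at $u$: other edges of $G$ can be interleaved with them (e.g.\ take $H=K_4$ inside a planar $G$ obtained from $K_4$ by stacking a vertex into one face). Hence the number of admissible tuples is controlled by $\sum_u \binom{\deg_G(u)}{d_0}$, which can be $\Theta(n^{d_0})$ when $G$ has a vertex of linear degree. One repair is to take a single oriented edge of $G$ as the seed---there are genuinely $O(n)$ of those---but then the entire burden shifts to the tracing claim, which is precisely the step you flag as the main obstacle and justify only heuristically. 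The intuition that ``each deviation uses up a handle or crosscap'' is correct in spirit, but turning it into a proof that a fixed seed extends to at most $c(h,g)$ type-$\tau$ copies requires real work (for instance, triangulating $G$ and rooting at a triangular face of $G$ lying inside a fixed face of the copy, as in the Wormald--Eppstein approach). As written, both the seed bound and the extension bound are unproved, so the argument is incomplete.
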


The next key tool is the following result by \citet*{Miller-JCTB87} and \citet*{Archdeacon-JGT86}.

\begin{theorem}[Additivity of Euler genus~\citep{Miller-JCTB87,Archdeacon-JGT86}] 
\label{Additivity}
For all graphs $G_1$ and $G_2$, if $|V(G_1)\cap V(G_2)|\leq 2$ then the Euler genus of $G_1\cup G_2$ is at least the Euler genus of $G_1$ plus the Euler genus of $G_2$.  
\end{theorem}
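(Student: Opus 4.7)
My plan is to prove this by a cut-and-paste surgery argument, converting an optimal embedding of $G_1\cup G_2$ into disjoint embeddings of $G_1$ and $G_2$ whose hosting surfaces have total Euler genus at most that of the original. Fix a cellular embedding of $G:=G_1\cup G_2$ in a surface $\Sigma$ of Euler genus $g:=g(G)$, and set $X:=V(G_1)\cap V(G_2)$. As a preliminary reduction, I would first establish the purely combinatorial fact that Euler genus is additive over connected components (by Euler's formula applied component-by-component), so it suffices to produce a (possibly disconnected) graph $G'$ consisting of a disjoint copy of $G_1$ together with a disjoint copy of $G_2$ that embeds in a surface of Euler genus at most $g$.

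For $|X|=0$ there is nothing to do. For $|X|=1$, say $X=\{v\}$, I would split $v$ into two vertices $v_1,v_2$, assigning to $v_i$ the edges of $G$ incident to $v$ that belong to $G_i$. To realise this split in the embedding, look inside a small open disk $D$ around $v$: the rotation at $v$ arranges the $G_1$-edges and the $G_2$-edges in some cyclic order. If the $G_1$-edges and $G_2$-edges form two consecutive arcs, I can draw a chord in $D$ separating them; this chord is part of a simple closed curve $\gamma$ in $\Sigma$ meeting $G$ only at $v$. Cutting $\Sigma$ along $\gamma$ and capping the resulting boundary component(s) with disks yields a surface $\Sigma'$ with $\chi(\Sigma')\ge\chi(\Sigma)+2$ if $\gamma$ is separating (in which case $\Sigma'$ splits into $\Sigma_1\sqcup\Sigma_2$ with $g(\Sigma_1)+g(\Sigma_2)=g$), and $\chi(\Sigma')\ge\chi(\Sigma)+1$ otherwise (which is already enough since we then still have one surface containing the disjoint graph $G_1\sqcup G_2$). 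The case where the $G_1$- and $G_2$-edges interleave around $v$ is the main subtlety: I would argue that such an interleaving witnesses a handle or crosscap that may be ``absorbed'' by the split, so that the adjusted surface still has Euler genus at most $g$; formally, one chooses $\gamma$ to be any simple closed curve in $\Sigma$ through $v$ separating some of the $G_1$-edges from some $G_2$-edges, iterates the surgery, and uses induction on the number of interleavings.

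For $|X|=2$, say $X=\{u,v\}$, I would perform the above splitting at both $u$ and $v$ simultaneously. The natural object to look for is a simple closed curve $\gamma$ in $\Sigma$ passing through $u$ and $v$ (following a $uv$-arc in a face-neighbourhood of each endpoint) that separates the remaining part of $G_1$ from the remaining part of $G_2$. If such a $\gamma$ exists and is separating in $\Sigma$, then cutting and capping splits $\Sigma$ into $\Sigma_1\sqcup\Sigma_2$ with $g(\Sigma_1)+g(\Sigma_2)=g$, each containing the appropriate $G_i$ (with $u,v$ duplicated). The hard part is the same as before: showing that no matter how the edges at $u$ and at $v$ interleave in the rotation, a finite sequence of surgeries reduces to the separable case without increasing Euler genus. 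I expect this to be the main obstacle, and would handle it by a careful Euler-characteristic accounting: each local surgery either decreases the number of interleavings or creates a new boundary component that can be capped, in both cases maintaining the budget $g(\Sigma_1)+g(\Sigma_2)\le g$.

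Once the disjoint embedding of (a graph containing) $G_1\sqcup G_2$ is produced, I apply the component-wise additivity from the preliminary reduction and conclude $g(G_1)+g(G_2)\le g(\Sigma_1)+g(\Sigma_2)\le g=g(G_1\cup G_2)$, as required.
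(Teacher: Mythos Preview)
The paper does not prove this theorem at all: it is quoted as a tool from the literature (Miller for the orientable part, Archdeacon for the nonorientable part) and used as a black box in \cref{BoundedCopies}. So there is no ``paper's own proof'' to compare your attempt to.

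As for the attempt itself, the overall surgery strategy is the right one, and your bookkeeping for the easy sub-cases (no interleaving at the shared vertices, separating versus non-separating $\gamma$) is fine. The gap is precisely where you flag it: the interleaved case. Your proposed fix---pick some $\gamma$ through $v$ that separates \emph{some} $G_1$-edges from \emph{some} $G_2$-edges, cut, cap, and induct on the number of interleavings---does not work as stated. A simple closed curve through $v$ that meets $G$ only at $v$ must have both of its arcs leaving $v$ lie in faces of the embedding; if the rotation at $v$ alternates $G_1,G_2,G_1,G_2,\ldots$, then any such $\gamma$ either fails to separate the two edge classes locally, or its two arcs enter distinct faces and cannot be closed up in $\Sigma\setminus G$. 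One cannot simply ``absorb a handle'' here without a real argument: you have to show that the interleaving forces the existence of a noncontractible curve disjoint from one of the $G_i$, and that is exactly the content of the theorem. The $|X|=2$ case compounds this, because now you must coordinate the splittings at $u$ and at $v$, and a shared edge $uv$ (present in both $G_1$ and $G_2$) adds a further wrinkle.

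This is not a minor omission: additivity of Euler genus over $(\le 2)$-sums is a substantial result whose original proofs each occupy a full paper, and the interleaving case is where all the work lies. If you want to supply a self-contained proof, you will need either the amalgamation machinery of Stahl/Archdeacon or the band-decomposition arguments of Miller; a sketch at the level you have given does not suffice.
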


We also use the following result of \citet*{ER60}; see~\citep{ALWZ} for a recent quantitative improvement. A \defn{$t$-sunflower} is a collection $\mathcal{S}$ of $t$ sets for which there exists a set $R$ such that $X\cap Y=R$ for all distinct $X,Y\in\mathcal{S}$. The set $R$ is called the \defn{kernel} of $\mathcal{S}$. 

\begin{lemma}[Sunflower Lemma~\citep{ER60}] \label{sunflower}
There exists a function $c_{\ref{sunflower}}(h,t)$ such that every collection of $c_{\ref{sunflower}}(h,t)$ many $h$-subsets of a set contains a $t$-sunflower.  
\end{lemma}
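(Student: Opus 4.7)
The plan is to prove the lemma by induction on $h$, giving an explicit bound of the form $c_{\ref{sunflower}}(h,t) = h!\,(t-1)^h+1$ or similar. The base case $h=1$ is immediate: among any $t$ distinct singletons, any two intersect in $\emptyset$, so the collection itself is a $t$-sunflower with empty kernel; hence $c_{\ref{sunflower}}(1,t)=t$ works.

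For the inductive step, assume $c_{\ref{sunflower}}(h-1,t)$ exists and let $\mathcal{F}$ be a collection of $h$-sets with $|\mathcal{F}| \geq h(t-1)\,c_{\ref{sunflower}}(h-1,t)+1$. First I would extract a maximal subcollection $\mathcal{M} \subseteq \mathcal{F}$ of pairwise disjoint sets. If $|\mathcal{M}| \geq t$, then $\mathcal{M}$ is itself a $t$-sunflower with empty kernel and we are done. Otherwise $|\mathcal{M}| < t$, so the union $U := \bigcup \mathcal{M}$ has at most $h(t-1)$ elements. By maximality every $S\in\mathcal{F}$ meets $U$, so by averaging there is some element $x\in U$ contained in at least $|\mathcal{F}|/(h(t-1)) \geq c_{\ref{sunflower}}(h-1,t)$ sets of $\mathcal{F}$.

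Collecting those sets and deleting $x$ from each yields a family of $(h-1)$-subsets of size at least $c_{\ref{sunflower}}(h-1,t)$, which by induction contains a $t$-sunflower $\mathcal{S}'$ with some kernel $R'$. Re-adding $x$ to every member of $\mathcal{S}'$ produces a $t$-sunflower in $\mathcal{F}$ with kernel $R' \cup \{x\}$, completing the induction. Setting $c_{\ref{sunflower}}(h,t) := h(t-1)\,c_{\ref{sunflower}}(h-1,t)+1$ gives the desired function.

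There is no real obstacle here beyond bookkeeping; the only subtlety is recognising the correct dichotomy (many pairwise disjoint sets vs.\ a popular common element) so that each branch either directly produces a sunflower or reduces the ground-set size of the problem by one, which is exactly what the induction on $h$ needs.
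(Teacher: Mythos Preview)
The paper does not actually prove this lemma; it merely cites it as a classical result of Erd\H{o}s and Rado. Your proposal is the standard Erd\H{o}s--Rado inductive argument and is correct, so there is nothing to compare against.
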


Finally, we mention some well-known corollaries of Euler's Formula that we use implicitly. Every graph with $n\geq 3$ vertices and Euler genus $g$ has at most $3(n+g-2)$ edges. Moreover, for bipartite graphs the above bound is $2(n+g-2)$. For example, this implies that the complete bipartite graph $K_{3,2g+3}$ has Euler genus greater than $g$.

\section{Strongly Non-Planar Graphs}
\label{BoundedNumberCopies}

We begin by proving a quantitative version of the upper bound in \cref{StronglyNonPlanar}. In fact, we will prove that \cref{StronglyNonPlanar} holds more generally for what we call `partially subdivided graphs'.  A \defn{partially subdivided graph} is a pair $(H, \mathcal P)$, where $H$ is a graph and $\mathcal P$ is a collection of internally disjoint paths in $H$ such that the two ends of each path in $\mathcal P$ are not adjacent in $H$, and every internal vertex of each path in $\mathcal P$ has degree $2$ in $H$.  Let $H - \mathcal P$  be the graph obtained from $H$ by deleting every internal vertex of each path in $\mathcal P$.  Finally, let $H / \mathcal P$ be the minor of $H$ obtained by contracting all but one edge from each path in $\mathcal P$. 

\begin{theorem}
\label{BoundedCopies}
Let $c_{\ref{BoundedCopies}}(h,g):=
c_{\ref{coherence}}(h, c_{\ref{sunflower}}(h,2g+3))$. 
Then for every partially subdivided graph $(H, \mathcal P)$ such that $H / \mathcal P$ is strongly non-planar and $|V(H)|=h$, for every surface $\Sigma$ with Euler genus $g$, and for every graph $G$ embedded in $\Sigma$,  there are at most
$c_{\ref{BoundedCopies}}(h,g)$ images of $H - \mathcal P$ in $G$ that extend to an image of $H$ in $G$.
\end{theorem}

\begin{proof}
Assume for the sake of contradiction, that there is a collection $\mathcal{H}$ of more than $c_{\ref{BoundedCopies}}(h,g)$ images of $H$ in $G$, such that all restrictions of these images to $H - \mathcal P$ are distinct.  By \cref{coherence}, $\mathcal H$ contains a coherent subfamily $\mathcal H_0$ of size at least $c_{\ref{sunflower}}(h,2g+3)$.  Let $\mathcal V$ be the collection of vertex sets of the images of $H$ in $\mathcal H_0$.  By coherence, $|\mathcal V|=|\mathcal H_0| \geq c_{\ref{sunflower}}(h,2g+3)$.

 By the Sunflower Lemma, $\mathcal V$ contains a $(2g+3)$-sunflower $\mathcal{F}$.  
 (Abusing notations slightly, we equate sets in $\mathcal F$ with the corresponding images of $H$ in $\mathcal H_0$.) 
 Let $Z$ be the kernel of $\mathcal{F}$. Thus $F \cap F'=Z$ for all $F, F' \in \mathcal{F}$.   Let $I$ be the set of internal vertices of all $P \in \mathcal P$.  Since the restrictions of the images of $H$ in $\mathcal F$ to $H - \mathcal P$ are all distinct, 
 for each $F \in \mathcal F$ there exists a vertex $w_F \in F \setminus Z$ such that $w_F$ is the image of a vertex in $V(H) \setminus I$.  By coherence, we may assume that every $w_F$ is the image of the same vertex of $V(H) \setminus I$.  For each $F \in \mathcal F$, let $C_F$ be the component of $F - Z$ that contains $w_F$, and let $N_F$ be the vertices of $Z$ with at least one neighbour in $V(C_F)$.  By coherence, $N_F$ is the same for all $F \in \mathcal F$.  Therefore, we obtain a $K_{|N_F|, |\mathcal F|}$ minor in $G$ by contracting each $C_F$ to a vertex.  Since $|\mathcal F| = 2g+3$ and $K_{3, 2g+3}$ does not embed in $\Sigma$, we must have $|N_F| \leq 2$. 
 
 For each $F \in \mathcal F$, consider the pair of subgraphs $(F_1, F_2):=(F[Z], F[(V(F) \setminus Z) \cup N_F])$ of $F$.  Since $|N_F| \leq 2$, either $(F_1, F_2)$ is a $(\leq 2)$-separation of $F$ or $F_2=F$.  For each $P \in \mathcal{P}$ choose $e_P \in E(P)$ and let $E_{\mathcal{P}}:=\bigcup_{P \in \mathcal{P}} E(P) \setminus \{e_P\}$.  
 For $i \in [2]$, let $\Gamma_i:=F_i / (E(F_i) \cap E_{\mathcal{P}})$.   Since $w_F \in V(F_2) \setminus V(F_1)$, it follows that either $(\Gamma_1, \Gamma_2)$ is a $(\leq 2)$-separation of $F / \mathcal P$ or $\Gamma_2 \cong F / \mathcal P $.  Since $F / \mathcal P \cong H / \mathcal P$ and $H / \mathcal P$ is strongly non-planar, either $(\Gamma_2)^+$ is non-planar or $\Gamma_2 = F / \mathcal P $.  In the first case, $F_2^+$ is also non-planar since $\Gamma_2^+$ is a minor of $F_2^+$.  In the second case,  $F_2=F$ is also  non-planar.  Let $F_2':=F_2^+$ if the first case holds, and $F_2'=F$ if the second case holds.  If $N_F:=\{x,y\}$ and $xy \notin E(F)$, let $\Sigma'$ be obtained from $\Sigma$ by adding a handle and using the handle to draw the edge $xy$, and let $G':=G \cup \{xy\}$.  Otherwise, let $\Sigma':=\Sigma$ and $G':=G$.
 We conclude by noting that the family of subgraphs $\{F_2' \mid F \in \mathcal F\}$ of $G'$ contradicts the Additivity of Euler genus (\cref{Additivity}), since they each have Euler genus at least $1$, they pairwise intersect only in $N_F$, are drawn on a surface $\Sigma'$ of Euler genus at most $g+2$, and $|\mathcal F| = 2g+3 >g+2$.  
 \end{proof}

Note that if $H$ is a strongly non-planar graph, then we recover \cref{StronglyNonPlanar} by applying \cref{BoundedCopies} to the partially subdivided graph $(H, \emptyset)$. We need the stronger statement in \cref{BoundedCopies} for the proof of \cref{main} to come. 

\section{Proof of Main Theorem}
\label{MainProof}
The proof of our main theorem uses a variant of the SPQR tree, which we now introduce.

\subsection{SPQRK Trees}
\label{SPQRK}
    
The \defn{SPQR tree} of a $2$-connected graph $G$ is a tree that displays all the $2$-separations of $G$.  Since we need to consider graphs that are not necessarily $2$-connected, we use a variant of the SPQR tree that we call the \defn{SPQRK tree}.  
    
Let $G$ be a connected graph.  The \defn{SPQRK tree $T_G$} of $G$ is a tree, where each node $a\in V(T_G)$ is associated with a multigraph $H_a$ which is a minor of $G$. Each vertex $x \in V(H_a)$ is a vertex of $G$, that is, $V(H_a) \subseteq V(G)$. 
    Each edge $e \in E(H_a)$ is classified either as a \defn{real} or \defn{virtual} edge. By the construction of an SPQRK tree each edge $e\in E(G)$ appears in exactly one minor $H_a$ as a real edge, and each real edge $e\in E(H_a)$ is an edge of $G$. 
    The SPQRK tree $T_G$ is defined recursively as follows.
    \begin{enumerate}
        \item If $G$ is $3$-connected, then $T_G$ consists of a single \defn{$R$-node} $a$ with $H_a := G$. 
        All edges of $H_a$ are real in this case.
        
        \item If $G$ is a cycle, then $T_G$ consists of a single \defn{$S$-node} $a$ with $H_a := G$. 
        Again, all edges of $H_a$ are real in this case. 
        
        \item If $G$ is isomorphic to $K_1$ or $K_2$, then $T_G$ consists of a single \defn{$K$-node} $a$ with $H_a := G$. Again, all edges of $H_a$ are real in this case. 
        
        \item If $G$ is $2$-connected and has a cutset $\{x,y\}$ such that the vertices $x$ and $y$ have degree at least $3$, we construct $T_G$ inductively as follows. Let $C_1,\dots, C_r$ ($r\geq 2$) be the connected components of $G - \{x,y\}$. First add a \defn{$P$-node} $a$ to $T_G$, for which $H_a$ is the graph with $V(H_a):=\{x,y\}$ consisting of $r$ parallel virtual edges and one additional real edge if $xy$ is an edge of $G$.
        
        Next let $G_i$ be the graph $G[V(C_i)\cup \{x,y\}]$ with the additional edge $xy$ if it is not already there. Since we include the edge $xy$, each $G_i$ is $2$-connected and we can construct the corresponding SPQRK tree $T_{G_i}$ by induction. Let $a_i$ be the (unique) node in $T_{G_i}$ for which $xy$ is a real edge in $H_{a_i}$. 
        In order to construct $T_G$, we make $xy$ a virtual edge in the node $a_i$,  and connect $a_i$ to $a$ in $T_G$. 
        
        \item If $G$ has a cut-vertex $x$ and $C_1, \dots, C_s$ ($s\geq 2$) are the connected components of $G - x$, then construct $T_G$ inductively as follows.  First, add a  \defn{$Q$-node} $a$ to $T_G$, for which $H_a$ is the graph consisting of the single vertex $x$.  For each $i \in [s]$, let $G_i := G[V(C_i)\cup \{x\}]$.  Since $G_i$ is connected, we can construct the corresponding SPQRK tree $T_{G_i}$ by induction.  If there is a unique node $b_i \in V(T_{G_i})$ such that $x \in V(H_{b_i})$, then make $a$ adjacent to $b_i$ in $T_{G}$. If $x$ is in at least two nodes of $V(T_{G_i})$, then $x \in V(C) \cap V(D)$ for some $(\leq 2)$-separation $(C,D)$ of $G_i$.  Since $G_i - x$ is connected, there must be a $P$-node $b_i$ in $T_{G_i}$ such that $x \in V(H_{b_i})$. Note that $b_i$ is not necessarily unique.  Choose one such $b_i$ and make $a$ adjacent to $b_i$ in $T_G$.
    \end{enumerate} 
    
As a side remark, note that the SPQRK tree $T_G$ of $G$ is in fact not unique---there is some freedom in choosing $b_i$ in the last point in the definition above---however, for our purposes we do not need uniqueness, we only need that $T_G$ displays all the $(\leq 2)$-separations of $G$.

The next lemma is the crux of the proof.  Let $J$ and $G$ be graphs and $X$ and $Y$ be cliques in $J$ and $G$ respectively, with $|X|=|Y|$.  Let $\phi:V(J) \to V(G)$ be an image of $J$ in $G$.  We say that \defn{$\phi$ fixes $X$ at $Y$} if $\phi(X)=Y$.  Let $(J', \mathcal P)$ be a partially subdivided graph such that $J=J'/ \mathcal P$.  We call $uv \in E(J)$ a \defn{fake} edge if $u$ and $v$ are the set of ends of some $P \in \mathcal P$.  Otherwise, $uv$ is a \defn{true edge}. 
 
\begin{lemma} \label{lem:rootedclique}
Let  $c_{\ref{lem:rootedclique}}(j,g) :=12(g+1)c_{\ref{coherence}}(j, c_{\ref{sunflower}}(j, 2g+3))$. 
Let $\Sigma$ be a surface of Euler genus $g$.  Let $(J', \mathcal P)$ be a connected, partially subdivided planar graph with $|V(J')|=j$ and let $J:=J'/ \mathcal P$.

Let $X$ be a clique in $J$ such that:
\begin{enumerate}
   \item \label{star}there do not exist independent flaps $(A,B)$ and $(C,D)$ of $J$ with $X \subseteq V(B \cap D)$, 
  \item \label{trueX} $|X| \in \{1,2\}$, and if $|X|=2$, then $X$ is a true edge,
  \item \label{smallp3} if $|X|=1$ and $J \cong P_3$, then $e$ is a true edge, where $e$ is the unique edge of $J$ not incident to $X$, 
  \item \label{smallc3} if $|X|=1$ and $J \cong C_3$, then $e$ is a true edge, where $e$ is the unique edge of $J$ not incident to $X$,
 \item \label{smallc4} if $|X|=2$ and $J \cong C_4$, then $e$ is a true edge, where $e$ is the unique edge of $J$ not incident to a vertex of $X$,
 \item \label{item:true3conn} if $J$ is $3$-connected, then all edges of $J$ with neither end in $X$ are true,
  \item \label{p3trueflap} if $(A,B)$ is a flap of $J$, with $X \subseteq V(B), |V(A \cap B)|=1$, and $A \cong P_3$, then $e$ is a true edge, where $e$ is the unique edge of $A$ not incident to $V(A \cap B)$, 
 \item \label{c3trueflap} if $(A,B)$ is a flap of $J$, with $X \subseteq V(B), |V(A \cap B)|=1$, and $A \cong C_3$, then $e$ is a true edge, where $e$ is the unique edge of $A$ not incident to $V(A \cap B)$, 
 \item \label{c3trueflap2} if $(A,B)$ is a flap of $J$, with $X \subseteq V(B), |V(A \cap B)|=2$, and $A^+ \cong C_3$, then at least one $e$ or $f$ is a true edge, where $e$ and $f$ are the two edges of $A$ with an end not on $V(A \cap B)$.  
 \item \label{c4trueflap} if $(A,B)$ is a flap of $J$, with $X \subseteq V(B), |V(A \cap B)|=2$, and $A^+ \cong C_4$, then $e$ is a true edge, where $e$ is the unique edge of $A$ not incident to $V(A \cap B)$, 
 \item \label{true3flap} if $(A,B)$ is a flap of $J$ such that $A^+$ is $3$-connected, then all edges of $A$ with neither end in $V(A \cap B)$ are true.  
 \end{enumerate}
 Then for every $n$-vertex graph $G$ embeddable in $\Sigma$ and every clique $Y$ in $G$ with $|Y|=|X|$, there are at most $c_{\ref{lem:rootedclique}}(j,g) n$ images of $J' - \mathcal P$ in $G$ with $X$ fixed at $Y$ that extend to an image of $J'$ in $G$.
 \end{lemma}

\begin{proof}[Proof of \cref{lem:rootedclique}]
Let $G$ be an $n$-vertex graph embedded in a surface $\Sigma$ of Euler genus $g$ and $Y$ be a clique in $G$ with $|Y|=|X|$. 
We begin by proving the lemma when $J$ is small. The lemma clearly holds if $|V(J)|=1$ or $|V(J)|=2=|X|$.  If $|V(J)|=2$ and $|X|$=1, let $y$ be the vertex of $J$ not in $X$.  Since there are at most $n-1$ vertices of $G$ to send $y$ to, we are done.  Similarly, we are done if $J \in \{P_3, C_3\}$ and $|X|=2$.  If $J \in \{P_3, C_3\}$ and $|X|=1$ let $e$ be the unique edge of $J$ not incident to $X$.  Note that $e$ exists in the case that $J \cong P_3$, since the vertex in $X$ cannot be the middle vertex of $J$ by \eqref{star}.  By (\ref{smallp3}) and (\ref{smallc3}), $e$ is a true edge, so there are at most $|E(G)| \leq 3(g+1)n$ edges of $G$ to send $e$ to.  Each edge gives at most two images of $J' - \mathcal P$ with $X$ fixed at $Y$ in $G$, so there are at most $6(g+1)n$ such images.  Suppose that $|V(J)|=4$.  Note that $J \not\cong P_4$, by \eqref{star}.  If $J \cong C_4$, then $|X| =2$ by \eqref{star}. Let $e$ be the unique edge of $J$ not incident to a vertex of $X$.  By (\ref{smallc4}), $e$ is a true edge, so again there are at most $3(g+1)n$ edges of $G$ to send $e$ to.  Each edge gives at most four images of $J' - \mathcal P$ with $X$ fixed at $Y$ in $G$, so there are at most $12(g+1)n$ such images.  

In summary, by the above discussion we may assume that $|V(J)|\geq 4$, and $J \not\cong P_4, C_4$ in case $|V(J)|=4$. 

Let $T_J$ be the SPQRK tree of $J$.  Suppose that $V(T_J)=\{a\}$.  If $a$ is a $K$-node, then we are done since $|V(J)| \leq 2$.  If $a$ is an $S$-node, then by \eqref{star}, $J\cong C_3$ or $J\cong C_4$, so we are done.  By the preceding remarks, we may assume that $J$ is $3$-connected, or $|V(J)| \geq 4$ and $|V(T_J)| \geq 2$.  
A clique $X'$ of $J$ is a \defn{true clique} if $|X'|=1$, or $|X'|=2$ and the edge of $X'$ is a true edge.  If $J$ is $3$-connected, we have the following easy claim.

\begin{claim} \label{menger}
If $J$ is $3$-connected, then there exists a true clique $X'$ in $J$ such that 
for all $w \in V(J) \setminus (X \cup X')$, there are three internally disjoint paths in $J$ from $w$ to $X \cup X'$, whose ends in $X \cup X'$ are distinct. 
\end{claim}

\begin{proof}
Since $J$ is $3$-connected, there is an edge $X'$ of $J$ with neither end in $X$; otherwise, $X$ is a cutset of $J$ with $|X| \leq 2$. By (\ref{item:true3conn}), $X'$ is a true edge. Since $J$ is $3$-connected, we are done by Menger's theorem.  
\end{proof}

We now suppose that $|V(J)| \geq 4$ and $|V(T_J)| \geq 2$, and we prove that \cref{menger} also holds in this case. Let $W$ be the set of $K$-, $S$-, and $R$-nodes of $V(T_J)$.  Note that for each $a \in W$, $H_a$ is a subgraph of $J$.  If $U$ is a non-empty proper subset of $W$, we define $H_U := \bigcup_{a \in U} H_a$, $\bd(U) := V(H_U \cap H_{W \setminus U})$, $\lambda(U) :=|\bd(H_U)|$, and $\sep(U):=(H_U, H_{W \setminus U})$.  

\begin{claim}
 $T_J$ is a path and there is a leaf $\ell$ of $T_J$ such that $X \subseteq V(H_\ell)$ and $X \setminus \bd(\{\ell\}) \neq \emptyset$. 
\end{claim}

\begin{proof}
Since $|V(T_J)| \geq 2$, $T_J$ has at least two leaves.  For each leaf $a$ of $T_J$, there is a flap $(C^a, D^a)$ such that $V(C^a)=V(H_a)$.  Since $(C^a, D^a)$ and $(D^a, C^a)$ are independent flaps, exactly one of $(V(C^a) \setminus V(D^a)) \cap X$ or $(V(D^a) \setminus V(C^a)) \cap X$ is non-empty by \eqref{star}.  Thus, $T_J$ has exactly two leaves and there is a leaf $\ell$ of $T_J$ such that $X \subseteq V(H_\ell)$ and $X \setminus \bd(\{\ell\}) \neq \emptyset$.
\end{proof}

\begin{claim} \label{claim:lambda3}
Let $r$ be the other leaf of $T_J$. Then for all non-empty $U \subseteq W \setminus \{\ell, r\}$ such that $U$ is not a single $K$-node, $\lambda(U) \geq 3$.  
\end{claim}

\begin{proof}
Towards a contradiction, suppose that $\lambda(U) \leq 2$ for some non-empty $U \subseteq W \setminus \{\ell, r\}$ which is not a single $K$-node.  Let $(C^r, D^r)$ be the flap of $J$ such that $V(C^r)=V(H_r)$.  Since $X \subseteq V(H_\ell)$, $(C^r, D^r)$ and $\sep(U)$ are independent flaps of $J$ which contradict \eqref{star}.
\end{proof}


\begin{claim} \label{claim:degree2}
Let  $S:=\{s \in V(J) \setminus X \mid \deg_J(s) \leq 2\}$.  Then $|S| \leq 2$, $S \subseteq V(H_r)$, and
         if $|S|=2$, then the two vertices in $S$ are adjacent in $J$.
\end{claim}

\begin{proof}
Since $|V(J)| \geq 4$, for each $s \in S$, $(\delta(s), J - s)$ is a flap with $X \subseteq V(J - s)$, where $\delta(s)$ is the subgraph of $J$ induced by the edges incident to $s$.  
Thus, by \eqref{star}, $S$ is a clique in $J$, and therefore $|S| \leq 3$.  Moreover, $|S|=3$ is impossible, since $|V(J)| \geq 4$ and $J$ is connected.  Thus, $|S| \leq 2$. 
Since $(A,B)=\sep(\{r\})$ is a flap with $X \subseteq V(B)$, \eqref{star} also implies $S \subseteq V(H_r)$.
\end{proof}

\begin{claim} \label{claim:cut-vertices}
$J$ has at most two cut-vertices.  Moreover, if $J$ has two cut-vertices, then they are the vertex set of some $K$-node of $T_J$.
\end{claim}

\begin{proof}
 Let $c$ and $d$ be distinct cut-vertices of $J$. Let $W_{cd} \subseteq W$ be the set of $K$-, $S$-, and $R$-nodes of $V(T_J)$ strictly between the $Q$-nodes corresponding to $c$ and $d$ in $T_J$.  Note that $\sep(W_{cd})$ is a $2$-separation of $J$, unless $W_{cd}$ is just a single $K$-node. Moreover, if $W_{cd}$ is not a single $K$-node, then $\sep(\{r\})$ and $\sep(W_{cd})$ would contradict \eqref{star}. It follows that $J$ has at most two cut-vertices, and that $\{c,d\}$ is the vertex set of some $K$-node of $T_J$.
\end{proof}

\begin{claim} \label{claim:3paths}
There exists a true clique $X'$ in $J$ such that 
for all $w \in V(J) \setminus (X \cup X')$, there are three internally disjoint paths in $J$ from $w$ to $X \cup X'$, whose ends in $X \cup X'$ are distinct. 
\end{claim}

\begin{proof}
If $r$ is an $R$-node, then all edges of $H_{r}^-$ are true by (\ref{true3flap}). 
In this case we let $X'$ be any edge of $H_{r}^-$ such that $X' \cap \bd(\{r\})=\emptyset$. If $r$ is an $S$-node, then by \eqref{star}, either $H_r \cong C_3$, or $H_r \cong C_4$ and $|\bd(\{r\})|=2$. In either case, by (\ref{c3trueflap}), (\ref{c3trueflap2}), and (\ref{c4trueflap}), we can choose $X'$ to be a true edge such that $V(H_r) \setminus (X' \cup \bd(\{r\})) = \emptyset$. Lastly, suppose that $r$ is a $K$-node. Then $H_r \cong K_2$, say $H_r$ consists of the edge $uv$ with $v \in \bd(\{r\})$.  If $v \notin S$, we let $X':=\{u\}$.  If $v \in S$, we let $X':=\{u,v\}$; note that $uv$ is a true edge by (\ref{p3trueflap}) in this case.  


Suppose the claim is false for the above choice of $X'$ for some vertex $w \in V(J) \setminus (X \cup X')$, and let $X^+:=X \cup X'$. Note that $|X^+| \geq 3$ by our choice of $X'$, since if $|X|=1$ then $J$ is $2$-connected by (\ref{star}). 
(Indeed, if $J$ is not $2$-connected then $J$ has a flap $(A,B)$ that is a $1$-separation with $X \subseteq B$, but then $(B, A \cup X)$ is also a flap, contradicting (\ref{star}).)  
Thus, by Menger's theorem, there is a $(\leq 2)$-separation $(J_1,J_2)$ of $J$ with $w \in V(J_1) \setminus V(J_2)$ and $X^+:=X \cup X' \subseteq V(J_2)$. 

Let $a$ be an $S$-node of $T_J$. Observe that every $2$-separation of the cycle $H_a$ lifts to a $2$-separation of $J$. We say that a $2$-separation of $J$ is \defn{rooted at $a$} if it is a lift of a $2$-separation of $H_a$.  
Since the SPQRK tree $T_J$ of $J$ `displays' all the $(\leq 2)$-separations of $J$, every $(\leq 2)$-separation $(A,B)$ of $J$
\begin{itemize}
    \item is equal to $\sep(U)$ for some $U \subseteq W$, or 
    \item is rooted at some $S$-node $a$ of $T_J$, or 
    \item is obtained from a $1$-separation $(A',B')$ by adding an isolated vertex to $A'$ or $B'$ (which is thus in $A \cap B$).
 \end{itemize}
 
  Suppose $(J_1, J_2)=\sep(U)$ for some $U \subseteq W$. Since $X^+ \subseteq V(J_2)$, $X \setminus \bd(\{\ell\}) \neq \emptyset$, and $X' \setminus \bd(\{r\}) \neq \emptyset$, we have $U \subseteq W \setminus \{\ell, r\}$.  This is a contradiction since $\lambda(U) \geq 3$ by \cref{claim:lambda3}.  Similarly, $(J_1, J_2)$ cannot be rooted at an $S$-node,  unless $\deg_J(x)=2$ for some $x \in V(J_1) \setminus V(J_2)$.  However, by \cref{claim:degree2} and our choice of $X'$,  $\deg_J(x) \geq 3$, for all $x \in V(J) \setminus X^+$, so this is also impossible.  

Finally, suppose the third possibility holds for some $1$-separation $(A',B')$ of $J$.
Observe that every $1$-separation $(A,B)$ of $J$ has $X' \subseteq V(A)$ and $X \subseteq V(B)$; or $X \subseteq V(A)$ and $X' \subseteq V(B)$.  By swapping the order of  $(A',B')$ we may assume that $X' \subseteq V(A')$ and $X \subseteq V(B')$.  
Moreover, $(A',B' \cup \{a\}) \in \{(J_1, J_2), (J_2, J_1)\}$ for some $a \in V(A') \setminus V(B')$; or $(A' \cup \{b\}, B') \in \{(J_1, J_2), (J_2, J_1)\}$ for some $b \in V(B') \setminus V(A')$.  If $(A' \cup \{b\}, B')=(J_1, J_2)$, then $(A',B')$ is a $1$-separation of $J$ such that $X \cup X' \subseteq V(B')$. However, no such separation exists (by the proof that $(J_1, J_2) \neq \sep(U)$ for all $U \subseteq W$).  Similarly, $(A',B' \cup \{a\})=(J_2, J_1)$ is impossible.     
    If $(A' \cup \{b\}, B')=(J_2, J_1)$, then $(A',B')$ and $(B', A' \cup \{b\})$ contradict \eqref{star}. The remaining case is $(A', B' \cup \{a\})=(J_1, J_2)$.  Let $c$ be the unique vertex in $V(A') \cap V(B')$. Recall that by the choice of $X'$, if $r$ is an $R$-node or an $S$-node, then $|X'|=2$ and $c \notin X'$.  However, this contradicts $X' \subseteq V(J_2)$.  Thus, $r$ is a $K$-node.  Note that $(A',B')=\sep(\{r\})$ is impossible, because $V(A') \setminus (V(B' \cup \{a\}))$ would be empty, and hence $(A',B' \cup \{a\})$ is not a $2$-separation.  Thus $c \notin V(H_r)$.  Let $v$ be the cut-vertex of $J$ in $V(H_r)$. By \cref{claim:cut-vertices}, $c$ and $v$ are adjacent and $v \in S=\{s \in V(J) \setminus X \mid \deg_J(s) \leq 2\}$.      
    In this case, by our choice of $X'$, we have $|X'|=2$ and $c \notin X'$, so we again have a contradiction.  
\end{proof}

Let $X'$ be the true clique of $J$ given by \cref{menger} or by \cref{claim:3paths}, depending whether $J$ is $3$-connected, or $|V(J)| \geq 4$ and $|V(T_J)| \geq 2$.    
Suppose $|X'|=1$. For each $y \in V(G)$, let $c_y$ be the number of images of $J' - \mathcal P$ in $G$, with $X$ fixed at $Y$ and $X'$ fixed at $y$, that extend to an image of $J'$ in $G$. 
Suppose $|X'|=2$. For each $f \in E(G)$, let $c_f$ be the number of images of $J' - \mathcal P$ in $G$, with $X$ fixed at $Y$ and $X'$ fixed at $f$, that extend to an image of $J'$ in $G$.  

We claim that if $|X'|=1$ then $c_y \leq c_{\ref{coherence}}(j, c_{\ref{sunflower}}(j, 2g+3))$ for all $y \in V(G)$, if $|X'|=2$ then $c_f \leq c_{\ref{coherence}}(j, c_{\ref{sunflower}}(j, 2g+3))$ for all $f \in E(G)$. We will prove both inequalities simultaneously, since the proof is the same. Arguing by contradiction, suppose $y$ or $f:=uv$ is a counterexample, and set $Y^+=Y \cup \{y\}$ if $|X'|=1$ and $Y^+=Y \cup \{u,v\}$ if $|X'|=2$.   Then, there exists a collection $\mathcal J_1$ of more than $c_{\ref{coherence}}(j, c_{\ref{sunflower}}(j, 2g+3))$  images of $J'$ in $G$ with $X$ fixed at $Y$ and $X'$ fixed at $y$ (respectively, $X'$ fixed at $f$) such that the restrictions of these images to $J' - \mathcal P$ are all distinct.  

 By \cref{coherence},  $\mathcal J_1$ contains a coherent subfamily $\mathcal J_2$ of size at least $c_{\ref{sunflower}}(j, 2g+3)$. 
Let $\mathcal{V}$ be the collection of vertex sets of $\mathcal J_2$.  Note that by coherence, $|\mathcal V|=|\mathcal J_2| \geq c_{\ref{sunflower}}(j, 2g+3)$.   By \cref{sunflower}, $\mathcal{V}$ contains an $s$-sunflower $\mathcal F$, where $s \geq 2g+3$.  
Let $Z$ be the kernel of $\mathcal{F}$.  By construction, $Y^+ \subseteq Z$.  Let $I$ be the set of internal vertices of all $P \in \mathcal P$.  Since the restrictions of each copy of $J'$ in $\mathcal F$ to $J' - \mathcal P$ are all distinct, for all $F \in \mathcal F$ there must be a vertex $w_F \in F \setminus Z$ such that $w_F$ is the image of a vertex in $V(J') \setminus I$.  By coherence, we may assume that each $w_F$ corresponds to the same vertex in  $V(J') \setminus I$.  By \cref{menger} and \cref{claim:3paths}, there are three internally disjoint paths from $w_F$ to $Y^+$ in $G[F]$ whose ends in $Y^+$ are distinct.  For each $F \in \mathcal {F}$, let $Z_F$ be the set consisting of the first vertices of $Z$ on each of these three paths.  By coherence, we may assume $Z_F$ is the same for all $F \in \mathcal {F}$.  Thus, $G$ contains a subdivision of $K_{3, 2g+3}$.  However, this is impossible, since $K_{3, 2g+3}$ does not embed in $\Sigma$. 

It follows that $c_y, c_f \leq c_{\ref{coherence}}(j, c_{\ref{sunflower}}(j, 2g+3))$ for all $y \in V(G)$ and $f \in E(G)$. The proof is complete by summing over all possible $y \in V(G)$ if $|X'|=1$, and summing over all possible $f \in E(G)$ if $|X'|=2$.  
\end{proof}

The final ingredient we need is the following `flap reduction' lemma.

\begin{lemma} \label{lem:flapreduction}
Let $H$ be a connected graph with flap-number $k \geq 1$.  Let $A$ be a subgraph of $H$ that is \textbf{maximal} (under the subgraph relation) subject to the following conditions:
\begin{itemize}
    \item $A$ has no isolated vertices, and
    \item there exists a flap $(A,B)$ of $H$ and a set $\mathcal F$  of $k$ independent flaps in $H$ with $(A,B) \in \mathcal F$.  
\end{itemize}
Then $B^+$ has flap-number $k-1$.  Moreover, $A$ is connected and $A^+$ does not contain independent flaps $(C, D)$ and $(C',D')$ such that $V(A \cap B) \subseteq V(D \cap D')$. 
\end{lemma}

\begin{proof}
We first show that $B^+$ has flap-number at least $k-1$.  To see this, let $\mathcal F$ be a set of $k$ independent flaps in $H$ such that $(A,B) \in \mathcal F$.  Every flap $(C,D) \in \mathcal F \setminus \{(A,B)\}$ corresponds to a flap $(C, D')$ in $B^+$, unless $k=2$, $H$ is planar, and $\mathcal F=\{(A,B), (B,A)\}$. In either case, $B^+$ has flap-number at least $k-1$.  

We now prove the upper bound. Towards a contradiction, let $(C_1, D_1), \dots , (C_k, D_k)$ be independent flaps in $B^+$. 
Let $X:=V(A \cap B)$.  If $X \subseteq V(S)$ for a subgraph $S$ of $B^+$, we let $S +_X A$ be the subgraph of $H$ obtained by gluing $A$ to $S$ along $X$, and deleting the edge between the ends of $X$ in $B^+$ if the edge does not exist in $H$.   If $X$ is contained in $V(D_\ell)$ for every $\ell \in [k]$, then $(A,B),(C_1, D_1 +_X A ), \dots , (C_k, D_k +_X A)$ are $k+1$ pairwise independent flaps in $H$. Thus $X$ is not contained in $V(D_\ell)$ for some $\ell \in [k]$. 
By relabelling, we may assume $\ell=1$.  Since $(V(C_1) \setminus V(D_1)) \cap X \neq \emptyset$ and $(V(C_1) \setminus V(D_1)) \cap (V(C_i) \setminus V(D_i))=\emptyset$ for all $i > 1$, we have $X \subseteq V(D_i)$ for all $i>1$.  
 Then, $(C_1 +_X A, D_1), (C_2, D_2 +_X A), \dots  (C_k, D_k +_X A)$ are $k$ independent flaps in $H$.  Since $A$ is a proper subgraph of $C_1 +_X A$, this contradicts the maximality of $A$. 

 Finally, we show that the last sentence of the lemma holds. Suppose that $A$ is disconnected and $A_1, \dots, A_c$ are the connected components of $A$. Since $H$ is connected, $A_i$ contains a vertex of $V(A \cap B)$ for all $i \in [c]$.  Thus, $c=2$ and $A_1$ and $A_2$ each contain exactly one vertex of $V(A \cap B)$.  Since neither $A_1$ nor $A_2$ is an isolated vertex, there exist $B_1$ and $B_2$ such that $(A_1,B_1)$ and $(A_2,B_2)$  are independent flaps of $H$.  Thus, $\mathcal F \setminus \{(A,B)\} \cup \{(A_1, B_1), (A_2, B_2)\}$ is a set of $k+1$ independent flaps of $H$, which contradicts that $H$ has flap-number $k$. Similarly, $A^+$ does not contain independent flaps $(C, D)$ and $(C',D')$ such that $V(A \cap B) \subseteq V(D \cap D')$. 
\end{proof}

We call a subgraph $A$ of $H$ a \defn{half-flap} if $(A,B)$ is a flap of $H$ for some $B$.  Two half-flaps $A$ and $C$ are \defn{independent} if there exist $B$ and $D$ such that $(A,B)$ and $(C,D)$ are independent flaps.  We say that $A$ is a \defn{full-half-flap} if $A$ satisfies the conditions of \cref{lem:flapreduction}.

We stress that the condition that $A$ is maximal is essential in the definition of a full-half-flap.  To see this, let $H$ be the $2 \times k$ grid, for $k$ large.  Note that even though $H$ has many $2$-separations, the flap-number of $H$ is $2$.  Moreover, the only full-half-flaps of $H$ are $H-y$, where $y$ is one of the four degree-$2$ vertices of $H$.  In this case,  $B^+ \cong K_3$, which has flap-number $1$.  Reducing on any other flap of $H$ yields a graph that still has flap-number $2$.

We now complete the proof of the upper bound in \cref{Main}. 

\begin{theorem} \label{main}
Let $c_{\ref{main}}(h,g):=6(g+1)c_{\ref{BoundedCopies}}(h,g)c_{\ref{EppsteinCor}}(h,g)c_{\ref{lem:rootedclique}}(h, g+2)^h.$
Then for every graph $H$ with $h$ vertices  and every surface $\Sigma$ of Euler genus $g$ in which $H$ embeds,
\[
C(H,\Sigma, n) \leq I(H,\Sigma,n) \leq c_{\ref{main}}(h,g)n^{f(H)}.
\]
\end{theorem}

\begin{proof}
Let $k:=f(H)$.  Since $c_{\ref{main}}(h_1,g)\cdot c_{\ref{main}}(h_2,g) \leq c_{\ref{main}}(h,g)$ whenever $h_1+h_2=h$, we may assume that $H$ is connected by induction on $|V(H)|$. 
A \defn{reduction sequence} of $H$ is a sequence of graphs $H_k, \dots, H_j$ for some $j \leq k$, where $H_k:=H$, and for all $i > j$, $H_{i-1}:=B_i^+$, where $(A_i, B_i)$ is a flap in $H_i$ satisfying the conditions of \cref{lem:flapreduction}.  By \cref{lem:flapreduction}, every reduction sequence satisfies the following properties.   

\begin{claim} \label{reductionproperties}
 Let $H_k, \dots, H_j$ be a reduction sequence of $H$, with corresponding flaps $(A_\ell, B_\ell)$ in $H_\ell$.  Then for all $\ell \in \{k, \dots, j+1\}$,  $A_\ell$ is connected and $A_\ell^+$ does not contain independent flaps $(C, D)$ and $(C',D')$ such that $V(A_\ell) \cap V(B_\ell) \subseteq V(D) \cap V(D')$. Moreover, for all $\ell \in \{k, \dots, j\}$, $H_\ell$ has flap-number $\ell$.
\end{claim}

We now establish further properties of reduction sequences.  Let $H_k, \dots, H_j$ be a reduction sequence of $H$, with corresponding flaps $(A_\ell, B_\ell)$ in $H_\ell$. 
If $V(A_\ell) \cap V(B_\ell):=\{u,v\}$ and $uv \notin E(H_\ell)$, we declare $uv$ to be a \defn{fake edge} of $H_{\ell-1}$. An edge of $H_\ell$ is a fake edge if it is a fake edge of $H_{\ell'}$ for some $\ell' \geq \ell$, and it is a \defn{true edge} if it is not a fake edge.  

\begin{claim} \label{true3conn}
Let $H_k, \dots, H_j$ be a reduction sequence of $H$, with corresponding flaps $(A_\ell, B_\ell)$ in $H_\ell$.   Then for all $\ell \in \{k, \dots, j\}$, if $(C,D)$ is a flap in $H_\ell$ such that $C^+$ is $3$-connected, then every edge of $C$ with neither end in $V(C \cap D)$ is true.
\end{claim}

\begin{proof}
Let $(C,D)$ be a flap of $H_\ell$ that is a counterexample with $|E(C)|-|V(D)|$ maximum.  Let $X:=V(C \cap D)$. Note that, by the maximality of $|E(C)|-|V(D)|$, if $H_\ell$ contains an edge $f$ whose ends are $X$, then $f \in E(C)$.  We claim that each $x \in X$ is incident to an edge of $D$.  If not, then $x$ must be an isolated vertex of $D$.  But now, $(C, D-x)$ contradicts the maximality of $|E(C)|-|V(D)|$.  For all $i \in \{k, \dots, \ell+1\}$ set $X_i:=V(A_i \cap B_i)$.  Let $\mathcal I$ be the set of indices $i \in \{k, \dots, \ell+1\}$ such that $X_i$ is the set of ends of a fake edge of $C^-$.  Let $s$ be the smallest index in $\mathcal I$, and let $e$ be the corresponding fake edge.  Since $H_k, \dots, H_j$ is a reduction sequence, there is a collection $\mathcal F_s$ of $s$ independent flaps of $H_s$ such that $(A_s, B_s) \in \mathcal F_s$ and $\sum_{(A',B') \in \mathcal F_s}|V(A')| +|E(A')|$ is minimum.  Let $(A,B)$ be an arbitrary flap in $\mathcal F_s \setminus \{(A_s, B_s)\}$.

There are several cases to consider depending on how $A$ and $C$ interact.

Suppose that $V(A)$ is a proper subset of $V(C)$.  Since $C^+$ is $3$-connected and $V(C) \setminus V(A) \neq \emptyset$, for some $Y \in \{X_s, X\}$, one vertex $y$ of $Y$ is in $V(A) \setminus V(B)$ and the other vertex of $Y$ is in $V(B) \setminus V(A)$.  Observe that  $V(A_s) \subseteq V(B)$ because $(A_s, B_s)$ and $(A,B)$ are independent flaps of $H_s$. In particular, $X_s \subseteq V(B)$.  By the minimality of $\sum_{(A',B') \in \mathcal F_s}|V(A')| +|E(A')|$, if $H_s$ contains an edge $f$ whose ends are $X$, then $f \in E(B)$.  Since $V(A)$ is a proper subset of $V(C)$ and each $x \in X$ is incident to an edge of $D$, this implies $X \subseteq V(B)$.  Therefore, for either choice of $Y$, we have $y \in V(B)$. Thus, $y \in V(A \cap B)$, which contradicts that $y \in V(A) \setminus V(B)$.

Suppose that $V(A)=V(C)$ and $|X_s \cup X| \geq 3$. Observe that $X_s \cup X \subseteq V(A)$.  Since $V(A_s) \subseteq V(B)$, $X_s \subseteq V(B)$. Moreover, since $V(D) \subseteq V(B)$, $X \subseteq V(B)$.  Therefore, $X_s \cup X \subseteq V(A \cap B)$.  
Since $|X_s \cup X| \geq 3$, this implies that $(A,B)$ is a $(\geq 3)$-separation of $H_s$,  which contradicts that $(A,B)$ is a flap. 
  
Suppose that $A \cap C=S$, where $S$ is a stable set and $S$ contains a vertex $x \notin X$.  If $x \notin X_s$, then $x$ is an isolated vertex of $A$.  If $x \in X_s$, then since $(A,B)$ and $(A_s, B_s)$ are independent flaps, $V(A \cap A_s) \setminus X_s = \emptyset$.  Thus, $x$ is an isolated vertex of $A$ in this case as well. But now, replacing $(A,B)$ by $(A-x, B)$ contradicts the minimality of $\sum_{(A',B') \in \mathcal F_s}|V(A')| + |E(A')|$. 
  
Suppose that $A \cap C$ consists of just a single edge $xy$. Since $C^+$ is $3$-connected, $\deg_{C^+}(x) \geq 3$ and $\deg_{C^+}(y) \geq 3$.  For $ z \in \{x,y\}$, let $d(z)$ be the number of $Y \in \{X_s, X\} \setminus \{\{x,y\}\}$ such that $z \in Y$.  Observe that $\deg_{C^+}(z) \leq \deg_{A \cap C}(z) + \deg_{B \cap C} (z)+d(z)$ for both $z \in \{x,y\}$.  Since $\deg_{A \cap C}(z)=1$ for both $z \in \{x,y\}$, this yields $\deg_{B \cap C}(z) \geq 2-d(z)$ for both $z \in \{x,y\}$.  
If $\{x,y\} \subseteq X \cup X_s$, then $d(x) \leq 1 $ and $d(y) \leq 1$.  
Therefore, $\deg_{B \cap C}(x) \geq 1$ and $\deg_{B \cap C}(y) \geq 1$, 
which implies $\{x,y\} \subseteq V(A \cap B)$.  But now, $(A -xy, B \cup \{xy\})$ is a $(\leq 2)$-separation, which contradicts the minimality of  $\sum_{(A',B') \in \mathcal F_s}|V(A')| + |E(A')|$.
By symmetry, we may assume $y \notin X_s \cup X$.  Thus, $d(y)=0$, which gives $\deg_{B \cap C} (y) \geq 2$.  In particular, $y \in V(A \cap B)$.  Moreover, since $y \notin X_s \cup X$, we have $y\in V(C) \setminus V(D)$, and thus $\deg_A(y)=\deg_{A \cap C}(y)=1$.  
Observe that $|V(A - y) \cap V(B \cup \{xy\})| \leq 2$ because $V(A - y) \cap V(B \cup \{xy\}) \subseteq (V(A \cap B) \setminus \{y\}) \cup \{x\}$. Therefore
$(A -y, B \cup \{xy\})$ is a $(\leq 2)$-separation.  
But now $A-y$ is a half-flap contained in $A$,  which contradicts the minimality of $\sum_{(A',B') \in \mathcal F_s}|V(A')| + |E(A')|$. 

Suppose that $V(C)$ is a proper subset of $V(A)$.  Clearly, $X_s \subseteq V(A)$. Also, $X_s \subseteq V(B)$ since $(A,B)$ and $(A_s, B_s)$ are independent flaps.  Therefore, $V(A \cap B)=X_s$, since $|X_s|=2$.  Moreover, $A \cap D$ contains at least one vertex not in $X$, since $V(C)$ is a proper subset of $V(A)$. If $A \cap D$ meets $B \cap D$ at a vertex $x \notin X$, then $x \notin X_s$ and $x \in V(A \cap B)$, which contradicts that $V(A \cap B)=X_s$.  Thus, $V(A \cap D) \cap V(B \cap D) \subseteq X$. It follows that $A \cap D$ is a half-flap, since $|X| \leq 2$.  However, this contradicts the minimality of $\sum_{(A',B') \in \mathcal F_s}|V(A')| +|E(A')|$ since $|V(A \cap D)| < |V(A)|$.  

Suppose that $3 \leq |V(A \cap C)| < |V(C)|$ and $V(A) \setminus V(C) \neq \emptyset$.  Since $C^+$ is $3$-connected, for some $Y \in \{X_s, X\}$, one vertex $y$ of $Y$ is in $V(A) \setminus V(B)$ and the other vertex of $Y$ is in $V(B) \setminus V(A)$.
Since $(A,B)$ and $(A_s, B_s)$ are independent flaps, we have $A_s \subseteq B$.  Thus, $X_s \subseteq V(B)$, and so $Y=X$. Let $B^*$ be obtained from $B$ by replacing $A_s$ by an edge whose ends are $X_s$, and adding the edge with ends $X$.  Since $C^+$ is $3$-connected, $(C^+ \cap A, C^+ \cap B^*)$ is a $(\geq 3)$-separation of $C^+$.  
It follows that $A \cap C$ meets $B \cap C$ in at least two vertices of $V(C) \setminus X$, and thus exactly two since $|V(A \cap B)| \leq 2$. 
In particular,  $V(A \cap B) \subseteq V(C) \setminus X$, and hence  $V(A \cap D) \cap V(B \cap D) = \emptyset$.  It follows that $A \cap D$ is a half-flap.  However, this contradicts the minimality of $\sum_{(A',B') \in \mathcal F_s}|V(A')| +|E(A')|$ since $|V(A \cap D)| < |V(A)|$.

By the previous cases, there are only two cases left to consider for $(A, B)$, namely (1) $V(A \cap C)\subseteq X$ and $A \cap C$ has no edges, or (2) $V(A)=V(C)$ and $|X_s \cup X| \leq 2$. 
Since $(A,B)$ is an arbitrary flap of $\mathcal F_s \setminus \{(A_s, B_s)\}$, we may assume that either $V(A' \cap C)\subseteq X$ and $A' \cap C$ has no edges for all $(A',B') \in \mathcal F_s \setminus \{(A_s, B_s)\}$; or that $V(A)=V(C)$ and $|X_s \cup X| \leq 2$.

Suppose that $V(A' \cap C)\subseteq X$ and $A' \cap C$ has no edges for all $(A',B') \in \mathcal F_s \setminus \{(A_s, B_s)\}$. By replacing $(A_s, B_s)$ by $(A_s +_{X_s} C, B_s -(V(C) \setminus X))$ in $\mathcal F_s$, we contradict that $A_s$ is a full-half-flap.
  
The last case is that $V(A)=V(C)$ and $|X_s \cup X| \leq 2$.  Since $X_s$ is the set of ends of a fake edge of $C^-$, this implies that $X_s \neq X$. Thus, we must have $|X|=1$ and $X \subseteq X_s$.  
Let $t$ be the smallest index in $\mathcal I$ larger than $s$, and let $f$ be the corresponding fake edge.  Note that $t$ exists since $C^-$ contains a fake edge with neither end in $X$.  
Let $\mathcal F_t$ be a collection of $t$ independent flaps of $H_t$ such that $(A_t, B_t) \in \mathcal F_t$ and $\sum_{(A',B') \in \mathcal F_t}|V(A')|+|E(A')|$ is minimum.  Let $(A^*,B^*)$ be an arbitrary flap in $\mathcal F_t \setminus \{(A_t, B_t)\}$.  Let $A_s^* \subseteq H_t$ be obtained from $A_s$ by reversing all the flap reductions for all $j \in [s,t]$.   The remainder of the proof is essentially the same as the previous cases with $(A^*,B^*)$ taking the role of $(A,B)$ and $\{X_s, X_t\}$ taking the role of $\{X_s, X\}$.  For completeness, we include all the details.  

Suppose that $V(A^*)$ is  a proper subset of $V(C)$. Since $C$ is $3$-connected, for some $Y \in \{X_s, X_t\}$, $V(A^*) \setminus V(B^*)$ contains one vertex $y$ of $Y$ and $V(B^*) \setminus V(A^*)$ contains the other vertex of $Y$.   By \cref{lem:flapreduction}, $A_s^*$ and $A_t$ are both connected.  Since $V(A^*) \subseteq V(C)$, this implies $X_s \cup X_t \subseteq V(B^*)$. 
Thus, for either choice of $Y$, we have $y \in V(A^* \cap B^*)$, which contradicts that $y \in V(A^*) \setminus V(B^*)$. 

Suppose that $V(A^*)=V(C)$.  Clearly, $X_s \cup X_t \subseteq V(C)=V(A^*)$. Since $A_s^*$ and $A_t$ are both connnected by~\cref{lem:flapreduction}, $X_s \cup X_t \subseteq V(B^*)$.  Therefore, $X_s \cup X_t \subseteq V(A^* \cap B^*)$.  
Since $|X_s \cup X_t| \geq 3$, this implies that $(A^*,B^*)$ is a $(\geq 3)$-separation of $H_t$,  which contradicts that $(A^*,B^*)$ is a flap.

Suppose that $A^* \cap C=S$, where $S$ is a stable set and $S$ contains a vertex $x \notin X_s$.  If $x \notin X_t$, then $x$ is an isolated vertex of $A$.  If $x \in X_t$, then since $(A^*,B^*)$ and $(A_t, B_t)$ are independent flaps, $V(A^* \cap A_t) \setminus X_t = \emptyset$.  Thus, $x$ is an isolated vertex of $A^*$ in this case as well.  
But now, replacing $(A^*,B^*)$ by $(A^*-x, B^*)$ contradicts the minimality of $\sum_{(A',B') \in \mathcal F_t}|V(A')| + |E(A')|$. 

Suppose that $A^* \cap C$ consists of just a single edge $xy$.  Since $C$ is $3$-connected, $\deg_{C}(x) \geq 3$ and $\deg_{C}(y) \geq 3$.   For $ z \in \{x,y\}$, let $d(z)$ be the number of $Y \in \{X_s, X_t\} \setminus \{\{x,y\}\}$ such that $z \in Y$.  Observe that $\deg_{C}(z) \leq \deg_{A^* \cap C}(z) + \deg_{B^* \cap C} (z)+d(z)$ for both $z \in \{x,y\}$.  Since $\deg_{A^* \cap C}(z)=1$ for both $z \in \{x,y\}$, this yields $\deg_{B^* \cap C}(z) \geq 2-d(z)$ for both $z \in \{x,y\}$.  
If $\{x,y\} \subseteq X_s \cup X_t$, then $d(x) \leq 1 $ and $d(y) \leq 1$.  
Therefore, $\deg_{B^* \cap C}(x) \geq 1$ and $\deg_{B^* \cap C}(y) \geq 1$, 
which implies $\{x,y\} \subseteq V(A^* \cap B^*)$.  But now, $(A^* -xy, B^* \cup \{xy\})$ is a $(\leq 2)$-separation, which contradicts the minimality of  $\sum_{(A',B') \in \mathcal F_t}|V(A')| + |E(A')|$.
By symmetry, we may assume $y \notin X_s \cup X_t$.  Thus, $d(y)=0$, which gives $\deg_{B^* \cap C} (y) \geq 2$.  In particular, $y \in V(A^* \cap B^*)$.  Moreover, since $y \notin X_s \cup X_t$, we have $y\in V(C) \setminus V(D)$, and thus $\deg_{A^*}(y)=\deg_{A^* \cap C}(y)=1$.  
Observe that $|V(A^* - y) \cap V(B^* \cup \{xy\})| \leq 2$ because $V(A^* - y) \cap V(B^* \cup \{xy\}) \subseteq (V(A^* \cap B^*) \setminus \{y\}) \cup \{x\}$. Therefore
$(A^* -y, B^* \cup \{xy\})$ is a $(\leq 2)$-separation.  
But now $A^*-y$ is a half-flap contained in $A^*$,  which contradicts the minimality of $\sum_{(A',B') \in \mathcal F_t}|V(A')| + |E(A')|$.

Suppose that $V(C)$ is a proper subset of $V(A^*)$.  Clearly, $X_t \subseteq V(A^*)$. Also, $X_t \subseteq V(B^*)$ since $(A^*,B^*)$ and $(A_t, B_t)$ are independent flaps.  Therefore, $V(A^* \cap B^*)=X_t$, since $|X_t|=2$.  Moreover, $A^* \cap (D \cup A_s^*)$ contains at least one vertex not in $X_t$, since $V(C)$ is a proper subset of $V(A^*)$. If $A^* \cap (D \cup A_s^*)$ meets $B^* \cap (D \cup A_s^*)$ at a vertex $x \notin X_s$, then $x \in V(A \cap B)$, which contradicts that $V(A \cap B)=X_s$.  Thus, $V(A^* \cap (D \cup A_s^*)) \cap V(B^* \cap (D \cup A_s^*)) \subseteq X_s$. It follows that $A^* \cap (D \cup A_s^*)$ is a half-flap, since $|X_s| \leq 2$.  However, this contradicts the minimality of $\sum_{(A',B') \in \mathcal F_t}|V(A')| +|E(A')|$ since $|V(A^* \cap (D \cup A_s^*))| < |V(A)|$.

 Suppose that $3 \leq |V(A^* \cap C)| < |V(C)|$ and $V(A^*) \setminus V(C) \neq \emptyset$.  Since $C$ is $3$-connected, for some $Y \in \{X_s, X_t\}$, one vertex $y$ of $Y$ is in $V(A^*) \setminus V(B^*)$ and the other vertex of $Y$ is in $V(B^*) \setminus V(A^*)$.
Since $(A^*,B^*)$ and $(A_t, B_t)$ are independent flaps, we have $A_t \subseteq B^*$.  Thus, $X_t \subseteq V(B^*)$, and so $Y=X_s$.    
Let $\beta$ be obtained from $B^*$ by replacing $A_t$ by an edge whose ends are $X_t$ and adding the edge with ends $X_s$.   Since $C$ is $3$-connected, $(C \cap A^*, C \cap \beta)$ is a $(\geq 3)$-separation of $C$.  It follows that $A \cap C$ meets $B \cap C$ in at least two vertices of $V(C) \setminus X$, and thus exactly two since $|V(A \cap B)| \leq 2$. 

Thus, $A^* \cap C$ meets $B^* \cap C$ in at least two vertices of $V(C) \setminus X_s$, and thus exactly two since $|V(A^* \cap B^*)| \leq 2$. In particular,  $V(A^* \cap B^*) \subseteq V(C) \setminus X_s$, and hence  $V(A^* \cap (D \cup A_s^*)) \cap V(B^* \cap (D \cup A_s^*)) = \emptyset$.  It follows that $A \cap (D \cup A_s^*)$ is a half-flap.  However, this contradicts the minimality of $\sum_{(A',B') \in \mathcal F_s}|V(A')| +|E(A')|$ since $|V(A \cap D)| < |V(A)|$.

Since $(A^*,B^*)$ is an arbitrary flap of $\mathcal F_t \setminus \{(A_t, B_t)\}$, by the previous cases, we may assume  $V(A' \cap C)\subseteq X_s$ and $A' \cap C$ has no edges for all $(A',B') \in \mathcal F_t \setminus \{(A_t, B_t)\}$. 
Therefore, by replacing $(A_t, B_t)$ by $(A_t +_{X_t} C, B_t -(V(C) \setminus X_t))$ in $\mathcal F_t$, we contradict that $A_t$ is a full-half-flap.
  \end{proof}

We will need to pick reduction sequences that satisfy a few additional properties.  Let $H_k, \dots, H_j$ be a reduction sequence of $H$, with corresponding flaps $(A_\ell, B_\ell)$ in $H_\ell$.   We say that $H_k, \dots, H_j$ is a \defn{good reduction sequence} if for all $\ell \in \{k, \dots, j\}$,  $H_\ell$ is not a cycle, and for all $\ell \in \{k, \dots, j+1\}$
 
   \begin{enumerate}
       \item  \label{trueP3} if $A_\ell \cong P_3$ and $V(A_\ell) \cap V(B_\ell)=\{x\}$, then $e$ is a true edge of $H_\ell$, where $e$ is the unique edge of $A_\ell$ not incident to $x$, 
       \item  \label{trueC3} if $A_\ell \cong C_3$ and $V(A_\ell) \cap V(B_\ell)=\{x\}$, then $e$ is a true edge of $H_\ell$, where $e$ is the unique edge of $A_\ell$ not incident to $x$, 
       \item \label{trueC4} if $A_\ell \cong C_4$ and $V(A_\ell) \cap V(B_\ell)=\{x,y\}$, then $e$ is a true edge of $H_\ell$, where $e$ is the unique edge of $A_\ell$ not incident to either $x$ or $y$.
   \end{enumerate}
   
Note that, by \cref{lem:flapreduction}, if $A_\ell \cong P_3$ and $V(A_\ell) \cap V(B_\ell)=\{x\}$ then $x$ cannot be the center of the $P_3$, hence edge $e$ is well-defined in case \eqref{trueP3} above. Similarly, if $A_\ell \cong C_4$ and $V(A_\ell) \cap V(B_\ell)=\{x,y\}$, then $x$ and $y$ cannot be opposite vertices of the $C_4$, and thus $e$ is also well-defined in case \eqref{trueC4}.  

We now give conditions under which a good reduction sequence can be extended to a longer good reduction sequence. Let $H'$ be a graph where some edges are fake, and let $u, v \in V(H')$.  A \defn{$u$-culdesac} of $H'$ is a cycle $C$ of $H'$ such that $u \in V(C), \deg_{H'}(u) \geq 3$, and $\deg_{H'}(w)=2$ for all $w \in V(C) \setminus \{u\}$.   
   A \defn{$u$-alley} of $H'$ is a path $P$ of $H'$ such that $u$ is an end of $P$, $|V(P)| \geq 2, \deg_{H'}(u) \geq 3$, and $\deg_P(w)=\deg_{H'}(w)$ for all $w \in V(P) \setminus \{u\}$.  A $uv$-path $P$ in $H'$ is a \defn{$uv$-alley} if $|V(P)| \geq 3$ and $\deg_{H'}(u) \geq 3$ and $\deg_{H'}(v) \geq 3$ and $\deg_{H'}(w)=2$ for all $w \in V(P) \setminus \{u,v\}$.

   
   
   \begin{claim} \label{culdesac}
   Let $C$ be a $u$-culdesac of $H'$ with $|V(C)| \geq 4$, and let $e:=uv$ be an edge of $C$ incident to $u$.  Let $P$ and $Q$ be the $3$- and $4$-vertex paths of $C$ containing $e$ and ending at $u$, respectively.  If $|V(C)|$ is even, then $P$ is a full-half flap of $H'$, and if $|V(C)|$ is odd, then $Q$ is a full-half-flap.     
   \end{claim}
   
   \begin{proof}
   Let $\mathcal F$ be a collection of $f(H')$ independent flaps in $H'$, and let $\mathcal F'$ be the collection of flaps $(F_1, F_1') \in \mathcal F$ such that $E(F_1 \cap C) \neq \emptyset$. Since $C$ contains a set of $m:=\lfloor \frac{|V(C)|}{2} \rfloor$ independent half-flaps of $H'$, we must have $|\mathcal F'| \geq m$; otherwise, $|\mathcal F|$ is not maximum.  If $\mathcal F'$ contains a flap $(F_1, F_1')$ such that $F_1 \cap C=uv$, then we replace $(F_1, F_1')$ by $(F_1 - v, F_1' \cup \{uv\})$.  Similarly, we may assume that $\mathcal F'$ does not contain a flap $(F_1, F_1')$ such that $F_1 \cap C=tu$, where $t$ is the other neighbour of $u$ in $C$.  In particular, $|E(F_1 \cap C)| \geq 2$ for all $(F_1, F_1') \in \mathcal F'$.  This implies that $|\mathcal F'|  \leq m$, and hence $|\mathcal F'|=m$.  Observe that $C$ contains a set $\mathcal H$ of $m$ independent half-flaps with $P \in \mathcal H$ if $|V(C)|$ is even, and $Q \in \mathcal H$ if $|V(C)|$ is odd.  It follows that $P$ can be extended to a full-half-flap $P'$ if $|V(C)|$ is even, and $Q$ can be extended to a full-half-flap $Q'$ if $|V(C)|$ is odd.  Since every collection of $f(H')$ independent flaps of $H'$ must contain at least $m$ flaps that use an edge of $C$, and $\bigcup_{F \in \mathcal H} F=C$, it follows that $C \cap P'=P$. If $P'$ is not contained in $C$, then $P'$ contains two independent half-flaps, which is a contradiction. Thus, $P'=P$.  The same argument gives $Q'=Q$. 
   \end{proof}
   
   The same proof also establishes the following claim for $uv$-alleys. 
   
   
   \begin{claim} \label{alley}
   Let $P$ be a $uv$-alley of $H'$ with $|V(P)| \geq 5$.  If $|V(P)|$ is even, let $P'$ be the $4$-vertex subpath of $P$ ending at $u$. If $|V(P)|$ is odd, let $P'$ be $3$-vertex subpath of $P$ ending at $u$.  Then $P'$ is a full-half-flap of $H'$.   
   \end{claim}
   
An edge $e$ is at \defn{distance $2$ from a vertex} $u$ if $e$ is not incident to $u$ and $e$ is incident to an edge that is incident to $u$.  A $u$-culdesac $C$ is \defn{tame} if $C$ has a true edge at  distance $2$ from $u$. A $uv$-alley $P$ is \defn{tame} if $P$ has a true edge at distance $2$ from $u$ or $v$.  A $u$-alley $P$ is \defn{tame} if $P$ has a true edge at distance $2$ from $u$.  Finally, we say that $H'$ is \defn{tame} if for all $u,v \in V(H')$ all $u$-culdesacs, $u$-alleys, and $uv$-alleys of $H'$ are tame.

    \begin{claim} \label{planarextension}
   Let $H_k, \dots, H_j$ be a good reduction sequence such that $j \geq 3$ and $H_j$ is tame.  Then $H_k, \dots, H_j$ can be extended to a good reduction sequence $H_k, \dots, H_{j-1}$ such that $H_{j-1}$ is tame.  \end{claim}
   
      \begin{proof}
   Suppose that $H_j$ contains a $u$-culdesac $C$ with $|V(C)| \geq 4$.  Since $H_j$ is tame, $C$ contains a true edge $e$ at distance $2$ from $u$. If $|V(C)|$ is even (respectively, odd), let $P$ be the $3$-vertex (respectively, $4$-vertex) path of $C$ such that one end of $P$ is $u$ and $e \notin E(P)$. By \cref{culdesac}, $P$ is a full-half-flap of $H_j$.  Letting $H_{j-1}$ be obtained from $H_j$ by applying \cref{lem:flapreduction} with $A=P$, we have that $H_\ell, \dots, H_{j-1}$ is a good reduction sequence and $H_{j-1}$ is tame.  Thus, we may assume that all culdesacs of $H_j$ are triangles. By \cref{alley}, we may also assume that for all $u,v \in V(H_j)$, all $uv$-alleys of $H_j$ have at most four vertices.
   
   Let $(A_j, B_j)$ be a flap of $H_j$ such that $A_j$ is a full-half-flap and let $H_{j-1}=B_j^+$.  Suppose $H_{j-1}$ is a cycle.  If $(A_j, B_j)$ is a $1$-separation, then $H_{j-1} \cong C_3$, since all culdesacs of $H_j$ are triangles.  
   Since $f(C_3)=1$, this contradicts $j \geq 3$.  If $(A_j, B_j)$ is a $2$-separation, then $H_{j-1} \in \{C_3, C_4\}$ since all alleys of $H_j$ have at most four vertices. In either case, $f(H_j)=2$, which contradicts $j \geq 3$.  
   Thus $H_{j-1}$ is not a cycle. 

   Since $H_j$ is tame, it follows that $H_k, \dots, H_{j-1}$ is a good reduction sequence. It only remains to show that $H_{j-1}$ is tame. Towards a contradiction, suppose $H_{j-1}$ contains a $u$-culdesac $C$ that is not tame.  The cases in which $H_{j-1}$ contains a $u$-alley that is not tame, or a $uv$-alley that is not tame are similar and are omitted.  We assume that $(A_j,B_j)$ is a $2$-separation (the case that $(A_j,B_j)$ is a $1$-separation is easier and is omitted). Since $H_j$ is tame, there must be an edge $xy \in E(C)$ such that $V(A_j) \cap V(B_j)=\{x,y\}$. Let $P_{xu}$ and $P_{yu}$ be the $x$--$u$ and $y$--$u$ paths in $C$ such that $V(P_{xu}) \cap V(P_{yu})=\{u\}$. 
  Since all alleys of $H_j$ have at most four vertices, $|V(P_{xu})|, |V(P_{yu})| \leq 4$.  Moreover, if $|V(P_{xu})| \in \{2, 4\}$, then adding the edge of $P_{xu}$ incident to $x$ to $A_j$ contradicts that $A_j$ is a full-half-flap.  Thus, $|V(P_{xu})|, |V(P_{yu})| \in \{1, 3\}$. In particular, this implies that $C$ is not a triangle.  
    For each fake edge $e \in E(C)$ let $\ell(e)$ be the smallest index such that $V(A_{\ell(e)}) \cap V(B_{\ell(e)})$ is equal to the set of ends of $e$. Among all fake edges of $C \setminus \{xy\}$ in $H_{j-1}$, let $f$ be such that $\ell(f)$ is smallest.  Since $C$ is not a triangle, there are two  edges of $C$ at distance $2$ from $u$  (both of which are fake).  Therefore, $f$ exists.      Let $g$ be the unique edge of $C$ such that $f \cup g$ is $P_{xu}$ or $P_{yu}$.  Then adding $g$ to $A_{\ell(f)}$ contradicts that $A_{\ell(f)}$ is a full-half flap. 
      \end{proof}

  In the case that $H$ is non-planar, we do not need $j \geq 3$ in the statement of \cref{planarextension}.  
   
   \begin{claim} \label{nonplanarextension}
   Let $H_k, \dots, H_j$ be a good reduction sequence of a non-planar graph such that $j \geq 1$ and $H_j$ is tame.  Then $H_k, \dots, H_j$ can be extended to a good reduction sequence $H_k, \dots, H_{j-1}$ such that $H_{j-1}$ is tame.  
   \end{claim}
   
   \begin{proof}
   The proof is identical to the proof of \cref{planarextension}, except for the second paragraph. Instead of using the assumption $j \geq 3$, we directly observe that $H_{j-1}$ cannot be a cycle because $H_{j-1}$ is non-planar.
   Note that if $j=1$, then $H_1$ contains a flap since it is non-planar.  Therefore, $H_0$ exists.  
   \end{proof}
   
   The final ingredient we need is the existence of a certain collection of paths in $H$. 
   Let $H_k, \dots, H_j$ be a reduction sequence of $H$ with corresponding flaps $(A_\ell, B_\ell)$ in $H_\ell$, and for each $\ell \in \{k, \dots, j\}$ let $F_\ell$ be the set of fake edges of $H_\ell$. 
   For each fake edge $e$ we define a set of indices $\mathcal I(e)$ recursively as follows. Let $\ell$ be the largest index such that $e$ is a fake edge of $H_\ell$ and recursively define $\mathcal I(e)=\{\ell+1\} \cup \bigcup_{f \in F_{\ell+1} \cap E(A_{\ell+1})}\mathcal I(f)$.
   
   \begin{claim} \label{partiallysubdivided}
   Let $H_k, \dots, H_j$ be a reduction sequence of $H$ with corresponding flaps $(A_\ell, B_\ell)$ in $H_\ell$, and for each $\ell \in \{k, \dots, j\}$ let $F_\ell$ be the set of fake edges of $H_\ell$. Then for all $\ell  \in \{k, \dots, j\}$, there is a collection of paths $\mathcal P_{\ell}=\{P_f \mid f \in F_\ell\}$ in $H$ such that for all $f \in F_\ell$, $P_f$ has the same ends as $f$, and $P_f \subseteq \bigcup_{i \in \mathcal I(f)} A_i$. Moreover, letting $H_\ell':=(H_{\ell} \setminus F_{\ell}) \cup \mathcal P_{\ell}$, we have that $(H_\ell', \mathcal P_{\ell})$ is a partially subdivided graph, $H_\ell'$ is a subgraph of $H$, and $H_\ell' / \mathcal P_\ell$ is isomorphic to $H_\ell$.  
\end{claim}

\begin{proof}
We proceed by reverse induction.  Since $H_k=H$ does not contain any fake edges, we may take $\mathcal P_k:=\emptyset$.  Suppose the claim is true for some $\ell > j$, and consider $\ell-1$.  If $|V(A_\ell) \cap V(B_\ell)|=2$ then let $\{a,b\}:=V(A_\ell) \cap V(B_\ell)$. We are done by induction if $|V(A_\ell) \cap V(B_\ell)| \leq 1$ or $e:=ab$ is a true edge of $H_{\ell-1}$.  Thus, we may assume that $e$ is a fake edge of $H_{\ell-1}$.  By the final part of \cref{lem:flapreduction}, there is path $P_{e}'$ in $A_{\ell}$ between $a$ and $b$. By induction, for each fake edge $f$ in $P_{e}'$, there is a path $P_f' \subseteq \bigcup_{i \in \mathcal I(f)} A_i$.   Moreover, note that if $f_1$ and $f_2$ are distinct fake edges of $H_{\ell-1}$, then $\mathcal{I}(f_1) \cap \mathcal I (f_2)=\emptyset$.   Therefore, replacing each fake edge $f$ of $P_{e}'$ with $P_f'$, we obtain a path $P_{e}$ contained in $\bigcup_{i \in \mathcal I(e)} A_i$. 
Every other fake edge $e'$ of $H_{\ell-1}$ is a fake edge of $H_{\ell'}$ for some $\ell'> \ell$.  By induction, for each such fake edge $e'$, there is a path $P_{e'}$ contained in $\bigcup_{i \in \mathcal I(e')} A_i$. Note that $P_{f_1}$ and $P_{f_2}$ are internally-disjoint for all distinct $f_1, f_2 \in F_{\ell-1}$, since $\mathcal{I}(f_1) \cap \mathcal I (f_2)=\emptyset$   Thus, $\mathcal P_{\ell-1}:=\{P_{f} \mid f \in F_{\ell-1}\}$ is the required set of paths.  
\end{proof}
   
We now prove the theorem in the case that $H$ is non-planar. 

 \begin{claim} \label{claim:nonplanar}
 Suppose $H$ is an $h$-vertex, non-planar graph and $H_k, \dots, H_0$ is a good reduction sequence of $H$ such that $H_\ell$ is tame for all $\ell \in \{k, \dots, 0\}$.  For each $\ell \in \{0, \dots, k\}$, let $F_\ell$ be the set of fake edges of $H_\ell$.  Then for every $n$-vertex graph $G$ embeddable in a surface of Euler genus $g$, and for all $\ell \in \{0, \dots, k\}$, there are at most
$c_{\ref{BoundedCopies}}(h,g)c_{\ref{lem:rootedclique}}(h, g+2)^\ell \cdot n^\ell$ images of $H_\ell \setminus F_\ell$ in $G$ that extend to an image of $H$ in $G$.  
 \end{claim}
 
 Before proceeding with the proof, we quickly show that \cref{claim:nonplanar} does indeed imply \cref{main} when $H$ is non-planar.  First note that $H_k, \dots, H_0$ exist by \cref{nonplanarextension}.  Next, applying \cref{claim:nonplanar} for $\ell=k$, we get that there are at most $c_{\ref{BoundedCopies}}(h,g)c_{\ref{lem:rootedclique}}(h, g+2)^k \cdot n^k \leq c_{\ref{main}}(h,g)n^k$ images of $H$ in $G$.
 
 \begin{proof}
 We proceed by induction on $\ell$.  When $\ell=0$, there are at most $c_{\ref{BoundedCopies}}(h,g)$ images of $H_0 \setminus F_0$ in $G$ that extend to an image of $H$ in $G$, by \cref{BoundedCopies}.  For the inductive step, suppose there are at most $c_{\ref{BoundedCopies}}(h,g)c_{\ref{lem:rootedclique}}(h, g+2)^\ell \cdot n^\ell$ images of $H_\ell \setminus F_\ell$ in $G$ that extend to an image of $H$ in $G$, and consider $\ell+1$.
 
 Let $\phi: V(H_\ell \setminus F_\ell) \to V(G)$ be a fixed copy of $H_\ell \setminus F_\ell$ in $G$ that extends to an image $\psi:V(H) \to V(G)$ of $H$ in $G$.  For every subgraph $S$ of $H$, let $S^\psi$ be the subgraph of $G$ obtained by restricting $\psi$ to $S$.  Let $(A_{\ell+1}, B_{\ell+1})$ be the flap in $H_{\ell+1}$ such that $H_\ell=B_{\ell+1}^+$, and let $F_{A_{\ell+1}}$ be the set of fake edges of $H_{\ell+1}$ contained in $E(A_{\ell+1})$.
  By \cref{partiallysubdivided}, there is a collection of paths $\mathcal P^\psi:=\{P_f^\psi \mid f \in F_{A_{\ell+1}}\}$ in $H^\psi$ such that for all $f \in F_{A_{\ell+1}}$, $P_f^\psi$ has the same ends as $f^\psi$ and $((A_{\ell+1} \setminus F_{A_{\ell+1}})^\psi \cup \mathcal{P}^\psi, \mathcal {P}^\psi)$ is a partially subdivided subgraph of $H^\psi$.
  
   Let $G_{\ell+1}$ be the minor of $G$ obtained by contracting all but one edge from each path in $\mathcal P^\psi$. Since $((A_{\ell+1} \setminus F_{A_{\ell+1}})^\psi \cup \mathcal P^\psi) / \mathcal P^\psi$ is isomorphic to $A_{\ell+1}$, $(A_{\ell+1} \setminus F_{A_{\ell+1}})^\psi \cup \mathcal P^\psi$ becomes an image of $A_{\ell+1}$ in $G_{\ell+1}$.

  Let $X:=V(A_{\ell+1}) \cap V(B_{\ell+1})$ and $Y:=\phi(X)$. If $X:=\{a,b\}$ and $ab$ is a fake edge of $H_{\ell+1}$, then add a handle to $\Sigma$ and use the handle to draw an edge between the vertices in $Y$ to obtain a graph $G_{\ell+1}' \supset G_{\ell+1}$ embedded in a surface of Euler genus $g+2$.  Otherwise, let $G_{\ell+1}':=G_{\ell+1}$. 
 
 Note that by \cref{reductionproperties}, \cref{true3conn}, goodness of the reduction sequence, and tameness of $H_{\ell+1}$, the conditions of \cref{lem:rootedclique} are satisfied, with $J=A_{\ell+1}^+$. Therefore, by \cref{lem:rootedclique}, there are at most $c_{\ref{lem:rootedclique}}(|V(A_{\ell+1}^+)|, g+2)n$ images of $A_{\ell+1}^+$ with $X$ rooted at $Y$ in $G_{\ell+1}'$.  Hence, there are at most $c_{\ref{lem:rootedclique}}(|V(A_1^+)|, g+2)n$ images of $H_{\ell+1} \setminus F_{\ell+1}$ in $G$ that extend $\phi$ and also extend to an image of $H$ in $G$.  By induction, there are at most $c_{\ref{BoundedCopies}}(h,g)c_{\ref{lem:rootedclique}}(h, g+2)^\ell \cdot n^\ell$ possibilities for $\phi$, so there are at most 
 $$c_{\ref{lem:rootedclique}}(|V(A_1^+)|, g+2) \cdot c_{\ref{BoundedCopies}}(h,g)c_{\ref{lem:rootedclique}}(h, g+2)^\ell \cdot n^{\ell+1} \leq c_{\ref{BoundedCopies}}(h,g)c_{\ref{lem:rootedclique}}(h, g+2)^{\ell+1} \cdot n^{\ell+1}
 $$
 images of $H_{\ell+1} \setminus F_{\ell+1}$ in $G$ that extend to an image of $H$ in $G$. 
 \end{proof}
 
 The case when $H$ is planar is similar, except that we must handle the case when $H$ is a cycle separately, which we do now.  We make a case distinction depending if the cycle has even or odd length. 
 
 Suppose $H \cong C_{2k}$.  Note that $f(C_{2k})=k$. Let $M$ be a perfect matching of $C_{2k}$.  For each copy $\phi$ of $C_{2k}$ in $G$, let $\phi(M)$ be the subset of $E(G)$ that $\phi$ maps $M$ to. Since there are at most $\binom{|E(G)|}{k}$ choices for $\phi(M)$, and each such choice corresponds to at most $2^k$ images of $C_{2k}$ in $G$, there are at most $2^k\binom{|E(G)|}{k} \leq c_{\ref{main}}(2k,g)n^k$ images of $C_{2k}$ in $G$.
 
 Suppose $H \cong C_{2k+1}$.  Note that $f(C_{2k+1})=k$.  We prove by induction on $n=|V(G)|$ that there are at most $6^k\binom{6(g+1)}{2}(g+1)^k \cdot n^k$ images of $C_{2k+1}$ in $G$.   For each vertex $a$ of $C_{2k+1}$ let $N_{C_{2k+1}}(a)$ be the two neighbours of $a$, and let $M_a$ be the unique perfect matching of $C_{2k+1}-(N_{C_{2k+1}}(a) \cup \{a\})$. Since $G$ is embedded in a surface of Euler genus $g$, $G$ has a vertex $x$ of degree at most $6(g+1)$.  
 For each copy $\phi$ of $C_{2k+1}$ in $G$ containing $x$, there are at most $\binom{|E(G-x)|}{k-1}$ choices for $\phi(M_x)$.  Since $x$ has degree at most $6(g+1)$ in $G$, there are at most $\binom{6(g+1)}{2}$ choices for $N_{C_{2k+1}}(x)$.  Each choice of $M_x$ and $N_{C_{2k+1}}(x)$ yields at most $2^k$ images of $C_{2k+1}$, so there are at most $2^k\binom{6(g+1)}{2} \binom{3(g+1)(n-1)}{k-1}$ images of $C_{2k+1}$ in $G$ containing $x$.  By induction there are at most $6^k\binom{6(g+1)}{2}(g+1)^k \cdot (n-1)^k$ images of $C_{2k+1}$ in $G-x$.  Summing these two bounds, we conclude that there are at most $6^k\binom{6(g+1)}{2}(g+1)^k \cdot n^k$ images of $C_{2k+1}$ in $G$.  Note that $6^k\binom{6(g+1)}{2}(g+1)^k \leq c_{\ref{main}}(2k+1, g)$.      

If $H$ is planar and $f(H)=1$, then there are at most $c_{\ref{EppsteinCor}}(h,g) n \leq c_{\ref{main}}(h, g)n$ images of $H$ in $G$.  Therefore, the last remaining case is when $H$ is planar, $f(H) \geq 2$, and $H$ is not a cycle.  
This is handled by the following claim.  
 
 \begin{claim} \label{edgesurvivor}
 Suppose $H$ is an $h$-vertex planar graph, $f(H) \geq 2$, and $H$ is not a cycle.  Let $H_k, \dots, H_2$ be a good reduction sequence of $H$ such that $H_\ell$ is tame for all $\ell \in \{k, \dots, 2\}$  Let $H_1:=uv$, where $uv$ is a true edge of $H_2$ such that there do not exist independent flaps $(C, D)$ and $(C',D')$ of $H_2$ such that $\{u,v\} \subseteq V(D \cap D')$. For all $\ell \in \{1, \dots, k\}$, let $F_\ell$ be the set of fake edges of $H_\ell$.  Then for every $n$-vertex graph $G$ embeddable in a surface of Euler genus $g$, and each $\ell \in \{1, \dots, k\}$, there exist at most $6(g+1)c_{\ref{lem:rootedclique}}(h, g+2)^{\ell-1} \cdot n^\ell$ images of $H_\ell \setminus F_\ell$ in $G$ that extend to an image of $H$ in $G$.  
 \end{claim}
 
 Before proceeding with the proof we note that $H_k, \dots, H_2$ exist by \cref{planarextension}. 
 The true edge $uv$ exists, by considering a leaf node of the SPQRK tree of $H_2$ and using \cref{true3conn} and the tameness of $H_2$.  Again, \cref{main} follows by taking $\ell=k$.  
 \begin{proof}
 We proceed by induction on $\ell$.  For $\ell=1$,  $H_1=uv$ is a true edge.  Therefore, there are at most $6(g+1)n$ images of $H_1$ in $G$.
 For $\ell=2$, we apply \cref{lem:rootedclique}, with $J'=(H_2 \setminus F_2) \cup \mathcal P_2, \mathcal P= \mathcal P_2, J=H_2$, and $X=\{u,v\}$, where $\mathcal P_2$ is defined in \cref{partiallysubdivided}.  
 Note that conditions (\ref{star}) and (\ref{trueX}) of \cref{lem:rootedclique} hold since $uv$ is a true edge and there do not exist independent flaps $(C, D)$ and $(C',D')$ of $H_2$ such that $\{u,v\} \subseteq V(D \cap D')$.  Conditions (\ref{smallp3})-(\ref{item:true3conn}) hold vacuously.  Conditions (\ref{p3trueflap}), (\ref{c3trueflap2}), and (\ref{c4trueflap}) hold by goodness of the reduction sequence. Condition (\ref{c3trueflap}) holds since $H_2$ is tame. Finally, (\ref{true3flap}) holds by \cref{true3conn}.  Therefore, each of the at most $6(g+1)n$ images of $H_1$ in $G$ extends to an image of $J'$ in at most $c_{\ref{lem:rootedclique}}(h, g) n$ ways.  
 Therefore, there are at most $6(g+1)c_{\ref{lem:rootedclique}}(h, g)n^2 \leq 6(g+1)c_{\ref{lem:rootedclique}}(h, g+2) \cdot n^2$ images of $H_2 \setminus F_2$ that extend to an image of $H$ in $G$.   For $\ell \geq 3$, the inductive step is exactly as in the non-planar case.  Therefore, we conclude that for each $\ell \in \{1, \dots, k\}$ there are at most $6(g+1)c_{\ref{lem:rootedclique}}(h, g+2)^{\ell-1} \cdot n^\ell$ images of $H_\ell \setminus F_\ell$ in $G$ that extend to an image of $H$ in $G$.  
 \end{proof}

This completes the proof of \cref{main}. 
\end{proof}

\section{Copies of Complete Graphs}
\label{CompleteGraphs}

This section studies the maximum number of copies of a given complete graph $K_s$ in an $n$-vertex graph that embeds in a given surface $\Sigma$. The flap-number of $K_s$ equals $1$ if $s\leq 4$ and equals $0$ if $s\geq 5$. Thus \cref{Main} implies that $C(n,K_s,\Sigma)=\Theta(n)$ for $s\leq 4$ and $C(n,K_s,\Sigma)=\Theta(1)$ for $s\geq 5$.
The bounds obtained in this section are much more precise than those given by \cref{Main}. Our method follows that of \citet*{DFJSW}, who characterised the $n$-vertex graphs that embed in a given surface $\Sigma$ and with the maximum number of complete subgraphs (in total), and then derived an upper bound on this maximum. 

A \defn{triangulation} of a surface $\Sigma$ is an embedding of a graph in $\Sigma$ in which each facial walk has three vertices and three edges with no repetitions. Let $G$ be a triangulation of $\Sigma$. An edge $vw$ of $G$ is \defn{reducible} if $vw$ is in exactly two triangles in $G$. And $G$ is \defn{irreducible} if no edge of $G$ is reducible~\citep{BE-IJM88,BE-IJM89,CDP-CGTA04,Sulanke06,Sulanke-KleinBottle,Sulanke-Generating,LawNeg-JCTB97,NakaOta-JGT95, JoretWood-JCTB10,Lavrenchenko}. 
\citet*{BE-IJM88,BE-IJM89} proved that each surface has a finite number of irreducible triangulations. For $\mathbb{S}_h$ with $h\leq 2$ and $\mathbb{N}_c$ with $c\leq 4$ the list of all irreducible triangulations is known~\citep{Lavrenchenko,Sulanke-KleinBottle,LawNeg-JCTB97,Sulanke-Generating}. In general, the best known upper bound on the number of vertices in an irreducible triangulation of a surface with Euler genus $g\geq 1$ is $13g-4$, due to \citet*{JoretWood-JCTB10}.  

Let $vw$ be a reducible edge of a triangulation $G$ of $\Sigma$.  
Let $vwx$ and $vwy$ be the two faces incident to $vw$ in $G$.  As illustrated in
\cref{ContractionSplitting}, let $G/vw$ be the graph obtained from
$G$ by \defn{contracting} $vw$; that is, delete the edges $vw,wy,wx$,
and identify $v$ and $w$ into $v$.  $G/vw$ is a simple graph since $x$
and $y$ are the only common neighbours of $v$ and $w$.  Indeed, $G/vw$
is a triangulation of $\Sigma$. Conversely, we say that $G$ is obtained from $G/vw$ by
\defn{splitting} the path $xvy$ at $v$.  If, in addition, $xy\in
E(G)$, then we say that $G$ is obtained from $G/vw$ by
\defn{splitting} the triangle $xvy$ at $v$.  Note that $xvy$ need not
be a face of $G/vw$. In the case that $xvy$ is a face, splitting $xvy$
is equivalent to adding a new vertex adjacent to each of $x,v,y$.

\begin{figure}[!ht]
\centering
\includegraphics{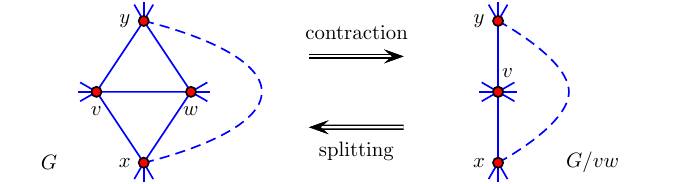}
\caption{\label{ContractionSplitting}Contracting   a reducible edge.}
\end{figure}

\subsection{Copies of Triangles} 

In this section we consider $C(K_3,\Sigma,n)$, and define the \defn{excess} of a graph $G$ to be $C(K_3,G)-3|V(G)|$.

\begin{lemma}
\label{TriangulationK3} 
For each surface $\Sigma$, every graph embeddable in $\Sigma$ with maximum excess is a triangulation of $\Sigma$. 
\end{lemma}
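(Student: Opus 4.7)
The plan is to argue by contradiction: suppose $G$ is a graph embeddable in $\Sigma$ with maximum excess $C(K_3,G)-3|V(G)|$ but is not a triangulation of $\Sigma$. I will construct another graph $G'$ embeddable in $\Sigma$ with strictly greater excess, contradicting maximality.

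First I would perform a sequence of preliminary reductions bringing $G$ to a convenient form. If $G$ has a vertex $v$ of degree at most $2$, then $v$ lies in at most one triangle, so $G-v$ has excess at least $\mathrm{excess}(G)+3-1>\mathrm{excess}(G)$; hence by maximality $\delta(G)\geq 3$. If $G$ is disconnected, then identifying a vertex of one component with a vertex of another gives a graph that still embeds in $\Sigma$ by the additivity of Euler genus (\cref{Additivity}), has the same triangle count (since the identification introduces no new edges), and has one fewer vertex; its excess is therefore strictly larger. Finally, if $G$ has a cut-vertex $v$ with parts $G_1, G_2$ meeting only at $v$, one can pick neighbours $a\in G_1$ and $b\in G_2$ of $v$ accessible from a common face around $v$ and insert the edge $ab$ inside that face, creating the new triangle $\{a,b,v\}$ without changing $|V(G)|$ and again strictly increasing excess. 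Iterating these operations, we may assume $G$ is $2$-connected with $\delta(G)\geq 3$, so every face boundary of $G$ is a simple cycle of length at least $3$.

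The core step is now immediate. Since $G$ is not a triangulation, some face $F$ has boundary cycle $v_0 v_1 \cdots v_{k-1} v_0$ of length $k\geq 4$. I would place a new vertex $u$ in the interior of $F$ and add the edges $uv_0, uv_1, \ldots, uv_{k-1}$; the resulting graph $G'$ is embedded in $\Sigma$ with $F$ subdivided into $k$ triangular faces. For each $i$, the triple $\{u, v_i, v_{i+1}\}$ spans a new triangle in $G'$, so $C(K_3, G')\geq C(K_3, G)+k$ and $|V(G')|=|V(G)|+1$, yielding
\[
\mathrm{excess}(G') - \mathrm{excess}(G) \geq k - 3 \geq 1,
\]
contradicting the maximality of $\mathrm{excess}(G)$. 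Hence $G$ must be a triangulation of $\Sigma$.

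The main obstacle is the preliminary reduction: a graph with $\delta\geq 3$ can still harbour cut-vertices or bridges, producing faces whose boundary walks are not simple cycles, to which the core vertex-insertion argument does not directly apply. The delicate point is to verify that each of the three reduction operations (deletion, identification, edge-addition) both strictly increases excess and preserves embeddability in $\Sigma$; the additivity of Euler genus (\cref{Additivity}) is the crucial tool for the latter. Once $G$ has been reduced to a $2$-connected graph with $\delta(G)\geq 3$, the remainder of the argument is a one-line application of Euler's formula.
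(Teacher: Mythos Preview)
Your preliminary reductions are fine, but the core step rests on a claim that is false for surfaces of positive Euler genus: it is \emph{not} true that a 2-connected graph with $\delta\geq 3$ embedded in an arbitrary surface has only simple cycles as facial boundaries. For example, $K_4$ has 2-cell embeddings in the torus with two faces whose boundary walks have length greater than $3$ and repeated vertices; and on non-orientable surfaces one can have a facial walk of the form $(u,v,w,u,v,w)$, where a single triangle is traversed twice. In this last situation your operation inserts a vertex adjacent only to the three pairwise-adjacent vertices $u,v,w$, creating exactly $3$ new triangles against $1$ new vertex, so the excess changes by $3-3=0$ and no contradiction is obtained.

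This is exactly the degenerate case the paper isolates via \cref{ThreeDistinctVertices} and disposes of by inserting \emph{two} adjacent new vertices rather than one, producing $9$ new triangles for $2$ new vertices. The paper's proof bypasses your reductions and works directly with an arbitrary non-triangular facial walk: if two boundary vertices are non-adjacent it adds an edge; if the boundary has at least four distinct (now pairwise adjacent) vertices it adds one vertex adjacent to four of them; otherwise the walk is $(u,v,w,u,v,w)$ and two vertices are added. Your reductions are pleasant but do not eliminate this last case, so some version of the three-vertex analysis is still required to close the argument.
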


\begin{proof}
Let $G$ be a graph embedded in $\Sigma$ that maximises the excess. We claim that $G$ is a triangulation. Suppose on the contrary that $F$ is a non-triangular facial walk in $G$. 

Suppose that two vertices in $F$ are not adjacent. Then there are vertices $v$ and $w$ at distance 2 in the subgraph induced by $F$. Thus adding the edge $vw$ `across' the face increases the number of triangles and the excess. This contradicts the choice of $G$. Now assume that $F$ induces a clique. 

Suppose that $F$ has at least four distinct vertices. Let $G'$ be the embedded graph obtained from $G$ by adding one new vertex `inside' the face adjacent to four distinct vertices of $F$. Thus $G'$ is embeddable in $\Sigma$, has $|V(G)|+1$ vertices, has at least $C(K_3,G)+\binom{4}{2}=C(K_3,G)+6$ triangles, and thus has excess at least the excess of $G$ plus $3$. This contradicts the choice of $G$. Now assume that $F$ has at most three distinct vertices. 

By \cref{ThreeDistinctVertices} below, $F=(u,v,w,u,v,w)$.  Let $G'$ be the graph obtained from $G$ by adding two new adjacent vertices $p$ and $q$, where $p$ is adjacent to the first $u,v,w$ sequence in $F$, and $q$ is  adjacent to the second $u,v,w$ sequence in $F$. So $G'$ is embeddable in $\Sigma$ and has $|V(G)|+2$ vertices. If $S$ is a non-empty subset of $\{p,q\}$ and $T\subseteq \{u,v,w\}$ with $|S|+|T|=3$, then $S\cup T$ is a triangle of $G'$ but not of $G$. There are $\binom{2}{1}\binom{3}{2}+\binom{2}{2}\binom{3}{1}=6+3=9$ such triangles. Thus $C(K_3,G')\geq C(K_3,G)+9$ and the excess of $G'$ is at least the excess of $G$ plus 3, which contradicts the choice of $G$. Hence no face of $G$ has repeated vertices, and $G$ is a triangulation of $\Sigma$. 
\end{proof}

\begin{lemma}
\label{ThreeDistinctVertices}
Let $F$ be a facial walk in an embedded graph, such that $F$ has exactly three distinct vertices that are pairwise adjacent. Then $F=(u,v,w)$ or $F=(u,v,w,u,v,w)$.
\end{lemma}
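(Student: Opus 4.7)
The plan is to combine a local constraint from the rotation system at each vertex with a global constraint from the fact that each edge of a $2$-cell embedding appears at most twice across all facial walks. Write $F = (v_0, v_1, \ldots, v_{n-1})$ with indices taken cyclically modulo $n$; by hypothesis each $v_i \in \{u, v, w\}$, so $n \geq 3$, and my goal is to show $n \in \{3, 6\}$ with the claimed pattern.

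Since $u, v, w$ are distinct and pairwise adjacent in a simple graph, each has degree at least $2$. The key local observation I would establish is that $v_{i-1} \neq v_{i+1}$ for every index $i$. Indeed, at the occurrence of $v_i$ in $F$, the incoming edge $v_{i-1}v_i$ and the outgoing edge $v_iv_{i+1}$ correspond to two adjacent edge-ends in the rotation at $v_i$; in a simple (loopless) graph, if these two edges coincided then the rotation at $v_i$ would consist of a single edge-end, forcing $\deg(v_i)=1$ and contradicting the degree lower bound.

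Combined with the usual walk condition $v_{i-1} \neq v_i$, and the fact that $F$ uses only three distinct vertex labels, the value of $v_{i+1}$ is then uniquely determined as the vertex distinct from both $v_{i-1}$ and $v_i$. Hence $F$ is $3$-periodic: writing $(v_0, v_1, v_2)$ for some ordering of $(u, v, w)$, one has $v_{i+3} = v_i$ for every $i$. Cyclic closure forces $3 \mid n$, and each of the three edges of the triangle $uvw$ then appears exactly $n/3$ times in $F$. The global constraint that each edge lies on facial walks at most twice in total gives $n/3 \leq 2$, so $n \in \{3, 6\}$, producing $F = (u, v, w)$ or $F = (u, v, w, u, v, w)$ after relabelling.

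The crux is the local rotation-system argument in the second paragraph. To make it fully rigorous I would invoke the standard combinatorial-map description of a $2$-cell embedding: faces are orbits of the face-tracing permutation, and each occurrence of a vertex in a facial walk corresponds to a distinct corner bounded by two adjacent edge-ends of the rotation at that vertex. Simplicity of the graph ensures those two edge-ends have distinct other endpoints whenever the vertex has degree at least $2$, which delivers the required $v_{i-1} \neq v_{i+1}$ and lets the rest of the argument proceed cleanly.
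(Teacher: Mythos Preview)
Your proof is correct and follows essentially the same approach as the paper: both establish that any three consecutive vertices of $F$ are distinct (via the observation that $v_{i-1}=v_{i+1}$ would force $\deg(v_i)=1$ in a simple graph), deduce $3$-periodicity, and then invoke the fact that each edge is traversed at most twice to conclude $n\in\{3,6\}$. Your rotation-system justification for the local step is more explicit than the paper's one-line appeal to ``no parallel edges'', but the underlying argument is the same.
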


\begin{proof}
Say $u,v,w$ are three consecutive vertices in $F$. Then $u\neq v$ and $v\neq w$ (since there are no loops). And $u\neq w$, since if $u=w$ then $\deg(v)=1$ (since there are no parallel edges), which is not possible since $v$ is adjacent to the two other vertices in $F$. So any three consecutive vertices in $F$ are pairwise distinct. If $F$ has no repeated vertex, then $F$ is the 3-cycle $(u,v,w)$. Otherwise, $F=(u,v,w,u,\dots)$. Again, since any three consecutive vertices in $F$ are pairwise distinct, $F=(u,v,w,u,\dots)$. Repeating this argument, $F=(u,v,w,u,v,w,\dots)$. Each edge is traversed at most twice; see~\citep[Sections 3.2 and 3.3]{MoharThom}. Thus $F=(u,v,w,u,v,w)$. 
\end{proof}

\begin{theorem}
\label{ExtremalK3} 
Let $\phi$ be the maximum excess of an irreducible triangulation of $\Sigma$.  Let $X$ be the set of irreducible triangulations of $\Sigma$ with excess $\phi$.  Then the excess of every graph $G$ embeddable in $\Sigma$ is at most $\phi$. Moreover, the excess of $G$ equals $\phi$ if and only if $G$ is obtained from some graph in $X$ by repeatedly  splitting triangles. 
\end{theorem}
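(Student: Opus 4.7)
The plan is to reduce the analysis to a single edge contraction in a triangulation. By \cref{TriangulationK3} the maximum excess is attained by a triangulation, so I restrict to triangulations and iteratively contract reducible edges to descend to an irreducible triangulation.

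The key ingredient will be the following excess-change formula. Let $G$ be a triangulation of $\Sigma$ with a reducible edge $vw$, with incident faces $vwx$ and $vwy$, so that $N(v)\cap N(w)=\{x,y\}$; set $V:=N(v)\setminus\{w,x,y\}$ and $W:=N(w)\setminus\{v,x,y\}$, which are disjoint by reducibility. A case analysis of which triangles of $G$ are lost, preserved, or merged---and which new triangles appear in $G/vw$---will yield
\begin{equation*}
    C(K_3,G/vw)-C(K_3,G)=-2-[xy\in E(G)]+e(V,W),
\end{equation*}
where $e(V,W)$ denotes the number of $G$-edges between $V$ and $W$. The two triangles $vwx,vwy$ are lost; the $-[xy\in E(G)]$ term accounts for the double-count of the triangle $uxy$ in $G/vw$ (the image of both $vxy$ and $wxy$ whenever $xy\in E(G)$); and the $+e(V,W)$ term counts genuinely new triangles $uab$ with $a\in V$, $b\in W$, $ab\in E(G)$, which arise because the 4-cycle $vabw$ contracts to the triangle $uab$ in $G/vw$. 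Since $|V(G/vw)|=|V(G)|-1$, this gives
\begin{equation*}
    \textrm{excess}(G/vw)-\textrm{excess}(G)=1+e(V,W)-[xy\in E(G)]\geq 0,
\end{equation*}
with equality if and only if $xy\in E(G)$ and $e(V,W)=0$.

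The theorem then follows. Contracting reducible edges from any triangulation $G$ down to an irreducible triangulation $G'$ only increases the excess, so $\textrm{excess}(G)\leq\textrm{excess}(G')\leq\phi$, proving the first assertion. If $\textrm{excess}(G)=\phi$, every contraction in such a sequence must preserve the excess, and so every reverse step is a triangle split (i.e.\ $xy\in E$) with $e(V,W)=0$; hence $G$ is obtained from some $G'\in X$ by repeatedly splitting triangles. Conversely, each such triangle split preserves excess by the formula, so every graph obtained from $X$ via such splits achieves excess $\phi$.

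The principal obstacle will be establishing the triangle-count formula correctly. The subtle point is the $+e(V,W)$ contribution: these triangles are not images of any triangle in $G$ (since $vb,wa\notin E(G)$ for $a\in V$, $b\in W$), yet they appear in $G/vw$ because the 4-cycle $vabw$ becomes a triangle under the contraction. Once this formula is established, the rest of the proof is a straightforward induction on the number of reducible-edge contractions required to reach an irreducible triangulation.
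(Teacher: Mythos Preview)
Your approach mirrors the paper's---reduce to triangulations via \cref{TriangulationK3} and then induct by contracting a reducible edge---and your excess-change formula
\[
\mathrm{excess}(G/vw)-\mathrm{excess}(G)=1+e(V,W)-[xy\in E(G)]
\]
is correct and in fact sharper than the paper's cruder bound $C(K_3,G)\le C(K_3,G/vw)+3$. The upper bound $\mathrm{excess}(G)\le\phi$ and the ``only if'' direction of the characterisation follow exactly as you say.

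The converse, however, has a genuine gap that your own formula exposes. For a triangle split ($xy\in E$) the formula reads $\mathrm{excess}(G)=\mathrm{excess}(G/vw)-e(V,W)$, so the split preserves excess \emph{only} when $e(V,W)=0$; you assert ``each such triangle split preserves excess by the formula'' without justifying this. On the sphere it does hold (the embedded $K_4$ on $\{v,w,x,y\}$ separates the $V$-side from the $W$-side), but on surfaces of positive genus it can fail. For instance, take $K_7$ on the torus---an irreducible triangulation with excess $\phi=14$---and split any vertex at two neighbours $x,y$ that are not consecutive in its rotation. Then $V,W\ne\emptyset$ and $e(V,W)=|V|\,|W|>0$ since every pair of vertices is adjacent in $K_7$; the resulting $8$-vertex triangulation has excess $14-e(V,W)<\phi$ despite being one triangle split away from a member of $X$. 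The paper's proof contains the identical gap (its equality condition ``iff $xvy$ is a triangle'' omits the requirement $e(V,W)=0$), so the ``if'' direction of the theorem, as stated, actually fails on positive-genus surfaces; what your argument really establishes is that $\mathrm{excess}(G)=\phi$ iff $G$ arises from $X$ by triangle splits each satisfying $e(V,W)=0$.
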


\begin{proof}
  We proceed by induction on $|V(G)|$.  By \cref{TriangulationK3}, we
  may assume that $G$ is a triangulation of $\Sigma$.  If $G$ is
  irreducible, then the claim follows from the definition of $X$ and
  $\phi$.  Otherwise, some edge $vw$ of $G$ is in exactly two triangles
  $vwx$ and $vwy$.  By induction, the excess of $G/vw$ is at most $\phi$. 
  Moreover, 
  the excess of $G/vw$ equals $\phi$ if and only if $G$ is obtained from some graph $H\in X$ by repeatedly  splitting triangles.

  Observe that every triangle of $G$ that is not in $G/vw$ is in $\{A\cup\{w\}:A\subseteq\{x,v,y\}, |A|=2\}$. Thus $C(K_3,G)\leq C(K_3,G/vw)+3$.
  Moreover, 
  equality holds if and only if $xvy$ is a triangle.  It follows from the definition of excess that the excess of $G$ is at most $\phi$. If the excess of $G$ equals $\phi$, then the excess of  $G/vw$ equals $\phi$, and   $xvy$ is a   triangle and $G$ is obtained from $H$ by   repeatedly splitting triangles.

  Conversely,   if
  $G$ is obtained from some $H\in X$ by
  repeatedly splitting triangles, then $xvy$ is a triangle and $G/vw$
  is obtained from $H$ by repeatedly splitting triangles.  By
  induction, the excess of $G/vw$ equals $\phi$, implying the excess of
  $G$ equals $\phi$.
\end{proof}

In general, since every irreducible triangulation of a surface $\Sigma$ with Euler genus $g$ has $O(g)$ vertices~\citep{JoretWood-JCTB10,NakaOta-JGT95}, \cref{ExtremalK3} implies that $C(K_3,\Sigma,n)\leq 3n+O(g^3)$. We now show that $C(K_3,\Sigma,n)=3n+\Theta(g^{3/2})$. 

The following elementary fact will be useful. For integers $s\geq 2$ and $m\geq 2$, 
\begin{align}
\label{NewSumReciprocals}
\sum_{i\geq m}\frac{1}{i^s} 
\leq \int_{m-1}^{\infty} i^{-s} di 
= \frac{1}{(s-1)(m-1)^{s-1}}. 
\end{align}

\begin{theorem}
\label{CopiesK3}
For every surface $\Sigma$ of Euler genus $g$, 
$$3n+ (\sqrt{6}-o(1)) g^{3/2}  \leq C(K_3,\Sigma,n)  \leq 3n+  \frac{21}{2} g^{3/2} + O(g\log g),$$
where the lower bound holds for all $n\geq\sqrt{6g}$ and the upper bound holds for all $n$. 
\end{theorem}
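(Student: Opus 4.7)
By \cref{ExtremalK3}, the excess $C(K_3,G)-3|V(G)|$ of every graph $G$ embeddable in $\Sigma$ is at most the maximum excess $\phi$ of an irreducible triangulation of $\Sigma$, and this maximum is preserved by triangle splittings (each such split adds one vertex and three triangles). Hence $C(K_3,\Sigma,n)=3n+\phi$ for all $n$ at least the size of some extremal irreducible triangulation, and the theorem reduces to sandwiching $\phi$ between $(\sqrt{6}-o(1))g^{3/2}$ and $\tfrac{21}{2}g^{3/2}+O(g\log g)$.

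For the \emph{lower bound}, set $s:=\lfloor \sqrt{6g}\rfloor$, so that $(s-3)(s-4)/6\le g$ and the Ringel--Youngs theorem yields an embedding of $K_s$ in $\Sigma$. This embedding has $s$ vertices and $\binom{s}{3}$ triangles, so excess $\binom{s}{3}-3s=(\sqrt{6}-o(1))g^{3/2}$ by a routine estimate. Since $K_s$ is dense its embedding has at least one triangular face; I iteratively split a triangular face, each step adding one vertex and three triangles while preserving excess, to produce for every $n\ge s$ an $n$-vertex graph embeddable in $\Sigma$ with at least $3n+(\sqrt{6}-o(1))g^{3/2}$ triangles. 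Since $s\le \sqrt{6g}$, this handles all $n\ge \sqrt{6g}$.

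For the \emph{upper bound}, by \cref{ExtremalK3} it suffices to bound the excess of an arbitrary irreducible triangulation $G$ of $\Sigma$. By the Joret--Wood bound, $n:=|V(G)|\le 13g-4$, so $m:=|E(G)|=3(n+g-2)\le 42g$. In a triangulation the link of each vertex $v$ is a cycle of length $d(v)$, so each non-facial triangle through $v$ corresponds to a chord of this link in $G$; letting $c(v)$ denote this chord count, the total number of non-facial triangles equals $T'=\tfrac13\sum_v c(v)$. Two complementary bounds hold: (a) $c(v)\le d(v)(d(v)-3)/2$, the trivial maximum number of chords in a $d(v)$-cycle; and (b) $c(v)\le d(v)+3g-3$, obtained by applying $|E|\le 3(|V|+g-2)$ to $G[N(v)\cup\{v\}]$, which is the wheel $W_{d(v)}$ augmented with $c(v)$ rim chords and embeds in $\Sigma$ (giving $2d(v)+c(v)\le 3(d(v)+g-1)$). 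The two bounds cross near $d(v)\approx \sqrt{6g}$; splitting the vertex set at this threshold, using (a) on low-degree and (b) on high-degree vertices, and exploiting $\sum_v d(v)=2m\le 84g$ together with the Markov-type bound $|\{v:d(v)>D\}|\le 2m/D$, careful bookkeeping delivers $T'\le \tfrac{21}{2}g^{3/2}+O(g\log g)$. Combined with the identity $C(K_3,G)-3n = T'-n+2g-4\le T'+O(g)$, this gives the claimed upper bound on $\phi$.

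The main obstacle is the precise accounting in the upper bound, in particular extracting the constant $\tfrac{21}{2}$ from the degree-threshold split; the $O(g\log g)$ correction arises naturally at the transition between the two bounds. The lower bound is comparatively direct once \cref{ExtremalK3} and the Ringel--Youngs triangular embedding of $K_s$ are in hand.
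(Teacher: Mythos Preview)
Your lower bound is essentially the paper's argument (Map Colour Theorem plus stacking vertices), and is fine.

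The upper-bound strategy, however, does not deliver the constant $\tfrac{21}{2}$. Your two local bounds on the chord count are correct:
\[
c(v)\le \tfrac{d(v)(d(v)-3)}{2}\qquad\text{and}\qquad c(v)\le d(v)+3g-3.
\]
But with only the global information $\sum_v d(v)=2m\le 84g$ and the Markov bound $|\{v:d(v)>D\}|\le 2m/D$, the best you can extract from the threshold split is
\[
\sum_v c(v)\;\le\; \underbrace{\tfrac12 D\cdot 2m}_{\text{low, since }d(v)^2\le D\,d(v)} \;+\; \underbrace{2m+\tfrac{6gm}{D}}_{\text{high}}
\;=\; m\!\left(D+\tfrac{6g}{D}\right)+2m,
\]
which is minimised at $D=\sqrt{6g}$, giving $\sum_v c(v)\le 2m\sqrt{6g}+2m$ and hence
\[
T'=\tfrac13\sum_v c(v)\;\le\; \tfrac{2m\sqrt{6g}}{3}+O(g)\;\le\; 28\sqrt{6}\,g^{3/2}+O(g)\approx 68.6\,g^{3/2}.
\]
So your method yields an $O(g^{3/2})$ bound, but with a constant roughly seven times too large; the ``careful bookkeeping'' cannot close this gap, because the step $\sum_{d(v)\le D} d(v)^2\le D\sum d(v)$ is genuinely lossy and there is no finer control on the degree sequence available from your hypotheses. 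Note also that no $\log g$ term appears anywhere in this computation, which is a hint that the argument producing $\tfrac{21}{2}$ must be structurally different.

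The paper's proof instead uses a degeneracy ordering $v_1,\dots,v_n$: in $G_i:=G[\{v_1,\dots,v_i\}]$ the last vertex has degree at most $6(1+g/i)$, so the number of triangles whose rightmost vertex is $v_i$ is at most $\binom{6(1+g/i)}{2}<18(1+g/i)^2$. Summing this for $i>m:=\lceil 3\sqrt{g}\rceil$ gives $18n+36g\ln n+18g^2/m\le O(g\log g)+6g^{3/2}$, and the first $m$ vertices contribute at most $\binom{m}{3}\le \tfrac{9}{2}g^{3/2}$. The $\tfrac{21}{2}=\tfrac{9}{2}+6$ and the $O(g\log g)$ (from the harmonic sum $\sum g/i$) both come directly out of this ordering argument; the per-index bound $\deg_{G_i}(v_i)\le 6(1+g/i)$ is exactly the refinement your global degree statistics lack.
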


\begin{proof}
First we prove the lower bound. Because of the $o(1)$ term we may assume that $g\geq 4$. Let $p:=\floor{\frac12 (7+\sqrt{24g+1})}$. Note that $p\geq 8$ and $p-\frac52>\sqrt{6g}$. The Map Colour Theorem~\citep{Ringel74} says that $K_p$ embeds in $\Sigma$. To obtain a graph with $n$ vertices embedded in $\Sigma$ repeat the following step $n-p$ times: choose a face $f$ and add a new vertex `inside' $f$ adjacent to all the vertices on the boundary of $f$. Each new vertex creates at least three new triangles. Thus 
$C(K_3,\Sigma,n)\geq 3(n-p) + \binom{p}{3}$ for $n\geq p$. 
Since $p\geq 8$ we have $\binom{p}{3} - 3p \geq \frac16(p-\frac52)^3 \geq \sqrt{6}g^{3/2}$. 
Thus $C(K_3,\Sigma,n)\geq 3n + \sqrt{6}g^{3/2}$. 

To prove the upper bound, by \cref{ExtremalK3}, it suffices to consider an $n$-vertex irreducible triangulation $G$ of $\Sigma$. So $n\leq 13g$~
\citep{JoretWood-JCTB10}. Let $v_1,\dots,v_n$ be a vertex ordering of $G$, where $v_i$ has minimum degree in $G_i:=G[\{v_1,\dots,v_i\}]$. 
By Euler's formula, $i\cdot \deg_{G_i}(v_i)  \leq 2|E(G_i)| \leq 6(i+g)$, implying $$\deg_{G_i}(v_i)\leq 6\left(1+ \frac{g}{i}\right).$$ 
Let $m:=\ceil{3\sqrt{g}}$. The number of  triangles $v_av_bv_i$ with $a<b<i\leq m$ is at most $\binom{m}{3} \leq \binom{3\sqrt{g}+1}{3} \leq \frac{9}{2} g^{3/2}$. 
Charge each triangle $v_av_bv_i$ with $a<b<i$ and $i\geq m+1$ to vertex $v_i$. 
For $m+1\leq i\leq n$, the number of triangles charged to $v_i$ is at most 
$$\binom{\deg_{G_i}(v_i)}{2}<18\left(1+\frac{g}{i}\right)^2=18\left(1+\frac{2g}{i}+\frac{g^2}{i^2}\right).$$
Thus
\begin{align*}
C(K_3,G) & \leq  
\frac{9}{2}g^{3/2} + 18\sum_{i=m+1}^{n} \left(1+\frac{2g}{i}+\frac{g^2}{i^2} \right)\\
& \leq \frac{9}{2}g^{3/2} + 18n  + 36g(\ln(n)+1) + 18g^2 \sum_{i\geq m+1}\frac{1}{i^2}.
\end{align*}
By \eqref{NewSumReciprocals} with $s=2$,
\begin{equation*}
C(K_3,G) 
\leq \frac{9}{2}g^{3/2} + 18n + 36g + 36g\ln(n) + \frac{18g^2}{m} .
\end{equation*}
Since $m\geq 3\sqrt{g}$ and $n\leq 13g$,
\begin{equation*}
C(K_3,G) 
\leq 
\frac{9}{2} g^{3/2} + 270g + 36g\ln(13g) + 6g^{3/2}
= \frac{21}{2} g^{3/2} + 270g + 36g\ln(13g).\qedhere
\end{equation*}
\end{proof}

\subsection{Copies of $K_4$}

In this section, we consider the case $H=K_4$, and define the \defn{excess} of a graph $G$ to be $C(K_4,G)-|V(G)|$.

\begin{lemma}
\label{TriangulationK4} 
For each surface $\Sigma$, every graph embeddable in $\Sigma$ with maximum excess is a triangulation of $\Sigma$. 
\end{lemma}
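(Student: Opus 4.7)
The plan is to mimic the proof of \cref{TriangulationK3}. Suppose $G$ is embeddable in $\Sigma$ with maximum excess. Assume for contradiction that $G$ is not a triangulation; then some facial walk $F$ has length at least $4$. I would split into three cases based on the structure of $F$, in each case producing a modified embedded graph with strictly higher excess, contradicting the maximality of $G$.

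Suppose first that $V(F)$ forms a clique with at least $4$ distinct vertices. Pick four pairwise-adjacent vertices $v_1,v_2,v_3,v_4 \in V(F)$ and add a new vertex $z$ inside $F$ adjacent to all four. Each of the $\binom{4}{3} = 4$ triples of these vertices is a triangle (since they are pairwise adjacent), so $\{z,v_i,v_j,v_k\}$ is a new copy of $K_4$, giving an excess gain of $4 - 1 = 3$. Next, suppose $V(F)$ has at most $3$ distinct vertices. By \cref{ThreeDistinctVertices} the walk is $F = (u,v,w,u,v,w)$ with $\{u,v,w\}$ a triangle. Add two new adjacent vertices $p,q$ inside $F$, with $p$ adjacent to the first occurrences of $u,v,w$ on the walk and $q$ adjacent to the second. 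The new copies of $K_4$ are $\{u,v,w,p\}$, $\{u,v,w,q\}$, $\{p,q,u,v\}$, $\{p,q,u,w\}$, $\{p,q,v,w\}$: five in total, against two new vertices, for an excess gain of $3$.

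The remaining case---$V(F)$ not a clique---is the main obstacle. Choose $v,w \in V(F)$ non-adjacent and add the chord $vw$ inside $F$, giving $G + vw$ with the same vertex set and $C(K_4, G + vw) \geq C(K_4, G)$. Unlike in the $K_3$ argument, where the common neighbour on the facial walk ensures that a single chord creates a new triangle, here a chord may leave the $K_4$-count unchanged. A strict increase would contradict maximality immediately; otherwise $G + vw$ is also max-excess, and I would iterate the chord-addition inside any remaining non-clique face. Each step preserves maximum excess and strictly increases the edge count, so by Euler's formula the iteration terminates at a max-excess graph $G^\ast$ in which every facial vertex set is a clique. If $G^\ast$ still has a non-triangular face, applying one of the two earlier cases to $G^\ast$ yields a strict excess gain, contradicting the common maximality of $G$ and $G^\ast$ and completing the proof. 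The delicate point is verifying that the iteration indeed terminates at a graph to which Case 2 or Case 3 can be applied rather than at a triangulation already, so that the contradiction is actually derived; this is where the argument will require the most care.
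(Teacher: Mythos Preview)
Your ``delicate point'' is not just delicate: it is a genuine gap that your argument, as written, does not close. Nothing prevents the chord-adding iteration from terminating at a triangulation $G^\ast$ with $C(K_4,G^\ast)=C(K_4,G)$. In that case you have produced another max-excess graph which happens to be a triangulation, but you have derived no contradiction about $G$ itself; the lemma asserts that \emph{every} max-excess graph is a triangulation, so exhibiting one that is does not help. Secondary extremality tricks (e.g.\ choosing $G$ with the most edges among max-excess graphs) would only prove the weaker statement that \emph{some} max-excess graph is a triangulation.

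The paper handles the non-clique face case with a single, sharper step that avoids iteration entirely. After adding the chord $vw$ to form $G'=G+vw$, it splits on whether $v$ and $w$ have two adjacent common neighbours. If so, a new $K_4$ appears and the excess strictly increases, done. If not, it \emph{contracts} $vw$ in $G'$ to obtain $G'':=G'/vw$. The point is that every $K_4$ of $G'$ survives in $G''$: a $K_4$ through both $v$ and $w$ would require two adjacent common neighbours (excluded), and distinct $K_4$'s cannot collide under the contraction for the same reason. Hence $C(K_4,G'')\geq C(K_4,G)$ while $|V(G'')|=|V(G)|-1$, so the excess strictly increases---contradiction. This contraction idea is exactly what your plan is missing; your Cases~1 and~2 match the paper.
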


\begin{proof}
Let $G$ be a graph embedded in $\Sigma$ with maximum excess. We claim that $G$ is a triangulation. 

Suppose that some facial walk $F$ contains non-adjacent vertices $v$ and $w$. Let $G'$ be the graph obtained from $G$ by adding the edge $vw$. Thus $C(K_4,G')\geq C(K_4,G)$. If two common neighbours of $v$ and $w$ are adjacent, then $C(K_4,G+vw)>C(K_4,G)$, implying that the excess of $G+vw$ is greater than the excess of $G$, which contradicts the choice of $G$. Now assume that no two common neighbours of $v$ and $w$ are adjacent. Let $G'':=G'/vw$. Every $K_4$ subgraph in $G'$ is also in $G''$. Thus $C(K_4,G'')\geq C(K_4,G')\geq C(K_4,G)$. Since $|V(G'')|<|V(G)|$, the excess of $G''$ is greater than the excess of $G$, which contradicts the choice of $G$. Now assume that every facial walk induces a clique in $G$. 

Suppose that some facial walk $F$ has at least four distinct vertices. Let $G'$ be the embedded graph obtained from $G$ by adding one new vertex `inside' the face adjacent to four distinct vertices of $F$. Thus $G'$ is embeddable in $\Sigma$, has $|V(G)|+1$ vertices, has at least $C(K_4,G)+\binom{4}{3}=C(K_4,G)+4$ triangles, and thus has excess at least the excess of $G$ plus $3$. This contradicts the choice of $G$. Now assume that every facial walk in $G$ has at most three distinct vertices. 

Suppose that some facial walk $F$ is not a triangle. By \cref{ThreeDistinctVertices}, $F=(u,v,w,u,v,w)$.  Let $G'$ be the graph obtained from $G$ by adding two new adjacent vertices $p$ and $q$, where $p$ is adjacent to the first $u,v,w$ sequence in $F$, and $q$ is  adjacent to the second $u,v,w$ sequence in $F$. So $G'$ is embeddable in $\Sigma$ and has $|V(G)|+2$ vertices. If $S$ is a non-empty subset of $\{p,q\}$ and $T\subseteq \{u,v,w\}$ with $|S|+|T|=4$, then $S\cup T$ induces a copy of $K_4$ in $G'$ but not in $G$. There are $\binom{2}{2}\binom{3}{2}+\binom{2}{1}\binom{3}{3}=3+2=5$ such copies. Thus $C(K_4,G')\geq C(K_4,G)+5$ and the excess of $G'$ is at least the excess of $G$ plus 3, which contradicts the choice of $G$. Therefore $G$ is a triangulation of $\Sigma$. 
\end{proof}

\begin{theorem}
\label{ExtremalK4} 
Let $\phi$ be the maximum excess of an irreducible triangulation of $\Sigma$.  Let $X$ be the set of irreducible triangulations of $\Sigma$ with excess $\phi$.  Then the excess of every graph $G$ embeddable in $\Sigma$ is at most $\phi$. Moreover, the excess of $G$ equals $\phi$ if and only if $G$ is obtained from some graph in $X$ by repeatedly  splitting triangles. 
\end{theorem}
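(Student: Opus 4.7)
I would follow the structure of the proof of \cref{ExtremalK3} essentially verbatim, substituting \cref{TriangulationK4} for its $K_3$-analogue and adjusting a single numeric constant. The argument is by induction on $|V(G)|$. First I apply \cref{TriangulationK4} to reduce to the case where $G$ is a triangulation of $\Sigma$. If $G$ is irreducible, the claim is immediate from the definitions of $\phi$ and $X$. Otherwise, fix a reducible edge $vw$ of $G$ with incident faces $vwx$ and $vwy$, so that $G/vw$ is a triangulation of $\Sigma$ with one fewer vertex; by induction, the excess of $G/vw$ is at most $\phi$, with equality if and only if $G/vw$ is obtained from some graph in $X$ by repeatedly splitting triangles.

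The only new ingredient needed is the inequality $C(K_4, G) \leq C(K_4, G/vw) + 1$, with equality if and only if $xvy$ is a triangle of $G$. To establish it, I would observe that every copy of $K_4$ in $G$ whose vertex set is not (after identifying $w$ with $v$) the vertex set of a $K_4$ in $G/vw$ must contain both $v$ and $w$. Since the only common neighbours of $v$ and $w$ in $G$ are $x$ and $y$, any such $K_4$ must be $\{v, w, x, y\}$, and this induces a $K_4$ precisely when $xy \in E(G)$. This is the $K_4$-analogue of the claim used in the proof of \cref{ExtremalK3} that every triangle of $G$ not in $G/vw$ lies in $\{\{w\} \cup A : A \subseteq \{v, x, y\},\ |A|=2\}$; the reason the bound is $+1$ here rather than $+3$ is simply that $\{v,x,y\}$ has only one $3$-element subset, namely itself.

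Combining this inequality with $|V(G)| = |V(G/vw)| + 1$, the excess of $G$ is at most the excess of $G/vw$, which is at most $\phi$ by induction. If the excess of $G$ equals $\phi$, then the excess of $G/vw$ also equals $\phi$ and $xy \in E(G)$; by induction $G/vw$ is obtained from some $H \in X$ by repeatedly splitting triangles, and then $G$ is obtained from $G/vw$ by one further triangle split (of $xvy$ at $v$), so $G$ is obtained from $H$ by repeatedly splitting triangles. Conversely, if $G$ is obtained from some $H \in X$ by a sequence of triangle splits, the last split exhibits a reducible edge $vw$ of $G$ with $xy \in E(G)$ and with $G/vw$ obtained from $H$ by fewer splits; by induction the excess of $G/vw$ equals $\phi$, and the equality case of the key inequality then gives that the excess of $G$ equals $\phi$.

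The main (mild) obstacle is pinning down the key $+1$ inequality together with its equality characterization; once this replaces the analogous $+3$ inequality used in the $K_3$ proof, the rest of the induction transfers mutatis mutandis.
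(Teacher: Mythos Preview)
Your proposal is correct and matches the paper's proof essentially line for line: induction on $|V(G)|$, reduce to triangulations via \cref{TriangulationK4}, handle the irreducible base case by definition, and in the inductive step use that every clique of $G$ not surviving in $G/vw$ lies in $\{A\cup\{w\}:A\subseteq\{x,v,y\}\}$, giving $C(K_4,G)\leq C(K_4,G/vw)+1$ with equality iff $xvy$ is a triangle. The only cosmetic difference is that the paper states the clique-containment observation for all clique sizes at once, while you argue it directly for $K_4$; both come to the same $+1$ bound and the same equality characterization.
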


\begin{proof}
We proceed by induction on $|V(G)|$.  By \cref{TriangulationK4}, we may assume that $G$ is a triangulation of $\Sigma$.  If $G$ is irreducible, then the claim follows from the definition of $X$ and $\phi$.  Otherwise, some edge $vw$ of $G$ is in exactly two triangles $vwx$ and $vwy$.  By induction, the excess of $G/vw$ is at most $\phi$.  Moreover, the excess of $G/vw$ equals $\phi$ if and only if $G$ is obtained from some graph $H\in X$ by repeatedly  splitting triangles.

Observe that every clique of $G$ that is not in $G/vw$ is in $\{A\cup\{w\}:A\subseteq\{x,v,y\}\}$. Thus $C(K_4,G)\leq C(K_4,G/vw)+1$. Moreover, equality holds if and only if $xvy$ is a triangle. It follows from the definition of excess that the excess of $G$ is at most $\phi$. If the excess of $G$ equals $\phi$, then the excess of  $G/vw$ equals $\phi$, and $xvy$ is a   triangle, and $G$ is obtained from $H$ by   repeatedly splitting triangles.

Conversely, if $G$ is obtained from some $H\in X$ by repeatedly splitting triangles, then $xvy$ is a triangle and $G/vw$ is obtained from $H$ by repeatedly splitting triangles.  By induction, the excess of $G/vw$ equals $\phi$, implying the excess of $G$ equals $\phi$.
\end{proof}

Since every irreducible triangulation of a surface $\Sigma$ with Euler genus $g$ has $O(g)$ vertices~\citep{NakaOta-JGT95,JoretWood-JCTB10}, \cref{ExtremalK4} implies that $C(K_4,\Sigma,n)\leq n+O(g^4)$. We now show that $C(K_4,\Sigma,n)=n+\Theta(g^{2})$. 

\begin{theorem}
\label{CopiesK4}
For every surface $\Sigma$ of Euler genus $g$, 
$$n+\frac32 g^{2} \leq C(K_4,\Sigma,n)  \leq n +
 \frac{283}{24}g^2 + O(g^{3/2}),$$
where the lower bound holds for $g\geq 1$ and $n\geq\sqrt{6g}$, and the upper bound holds for all $n$. 
\end{theorem}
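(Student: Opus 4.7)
The plan is to mirror exactly the structure of \cref{CopiesK3}, substituting \cref{ExtremalK4} and $\binom{d}{3}$ for \cref{ExtremalK3} and $\binom{d}{2}$ throughout.

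\emph{Lower bound.} Apply the Map Colour Theorem to embed $K_p$ in $\Sigma$ with $p := \lfloor \tfrac12(7+\sqrt{24g+1})\rfloor$, so that $p(p-7) \geq 24g$. Starting from this embedding, iterate the following step $n-p$ times: pick any triangular face $abc$ and insert a new vertex $w$ in that face adjacent to $a,b,c$. Since the only neighbours of $w$ in the resulting graph are $a,b,c$, and those three are pairwise adjacent, the vertex $w$ participates in exactly one new copy of $K_4$, namely $\{w,a,b,c\}$. Consequently
\[
C(K_4,\Sigma,n) \;\geq\; (n-p) + \binom{p}{4},
\]
and a routine calculation, using $(p-\tfrac52)^2 \geq 6g$, shows $\binom{p}{4}-p \geq \tfrac32 g^2$ for all relevant $g$ (the hypothesis $n\geq \sqrt{6g}$ guarantees $n\geq p$).

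\emph{Upper bound.} By \cref{TriangulationK4} we may assume $G$ is a triangulation of $\Sigma$. If $n > 13g$, the result of \citet{JoretWood-JCTB10} yields a reducible edge $vw$, and as already observed in the proof of \cref{ExtremalK4} any copy of $K_4$ that is in $G$ but not in $G/vw$ must contain both $v$ and $w$, forcing it to be $\{v,w,x,y\}$ where $vwx,vwy$ are the two faces at $vw$. Hence $C(K_4,G) \leq C(K_4,G/vw)+1$, and iterating reduces the problem to a triangulation with at most $13g$ vertices at a cost of at most $n-13g$ in the $K_4$-count, which is precisely the ``$n$'' in the final bound.

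So suppose $n \leq 13g$. Order the vertices $v_1,\dots,v_n$ so that $v_i$ has minimum degree in $G_i := G[\{v_1,\dots,v_i\}]$; Euler's formula gives $\deg_{G_i}(v_i) \leq 6(1+g/i)$. Charge each copy of $K_4$ to its largest-indexed vertex, so that the number of copies charged to $v_i$ is at most $\binom{\deg_{G_i}(v_i)}{3}$. Split the sum at $m := \lceil 4\sqrt{g}\rceil$: for $i \leq m$ bound the total contribution by $\binom{m}{4} \leq \frac{256\, g^2}{24}+O(g^{3/2})$ via the hockey stick identity, and for $i > m$ expand
\[
\binom{\deg_{G_i}(v_i)}{3} \;\leq\; \frac{\deg_{G_i}(v_i)^3}{6} \;\leq\; 36\Bigl(1+\frac{g}{i}\Bigr)^3 \;=\; 36 + \frac{108g}{i} + \frac{108g^2}{i^2} + \frac{36g^3}{i^3}.
\]
Summing term by term using \eqref{NewSumReciprocals}, the first three contribute $O(g) + O(g\log g) + O(g^{3/2}) = O(g^{3/2})$ (using $n \leq 13g$), while the last contributes $\tfrac{36g^3}{2m^2} \leq \tfrac{9g^2}{8} = \tfrac{27g^2}{24}$. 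Adding the two halves yields the leading coefficient $\tfrac{256+27}{24}=\tfrac{283}{24}$, with all other error terms absorbed in $O(g^{3/2})$.

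\emph{Expected obstacle.} The structural ingredients — \cref{TriangulationK4}, the edge-contraction inequality $C(K_4,G)\leq C(K_4,G/vw)+1$, and the degeneracy ordering — are already in hand, so no conceptual difficulty remains. The main care is arithmetical: tracking which terms in the expansion of $\binom{6(1+g/i)}{3}$ and in $\binom{m}{4}$ produce $g^2$-contributions versus $O(g^{3/2})$ contributions, and verifying that the choice $m = \lceil 4\sqrt{g}\rceil$ balances $\binom{m}{4}$ against the $g^3/i^3$ tail sum to give exactly $\tfrac{283}{24}g^2$. One also has to handle small $g$ and small $n$ separately in the lower bound so that $\binom{p}{4}-p \geq \tfrac32 g^2$ holds cleanly; the hypothesis $n\geq \sqrt{6g}$ is precisely what makes the $K_p$ embedding fit inside $n$ vertices.
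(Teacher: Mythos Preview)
Your proposal is correct and follows essentially the same approach as the paper: Map Colour Theorem plus face-stacking for the lower bound, and \cref{TriangulationK4}/\cref{ExtremalK4} plus a degeneracy ordering split at $m=\lceil 4\sqrt{g}\rceil$ for the upper bound, with identical arithmetic leading to $\tfrac{256+27}{24}=\tfrac{283}{24}$. Two small slips to fix: the parenthetical ``$p(p-7)\geq 24g$'' is backwards (the Map Colour condition is $(p-3)(p-4)\leq 6g$, which gives $p(p-7)<6g$), and you need to special-case the Klein bottle $\mathbb{N}_2$, where $K_7$ does not embed despite the formula giving $p=7$ --- the paper takes $p:=6$ there, and $\binom{6}{4}-6=9\geq \tfrac32\cdot 4$ still suffices.
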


\begin{proof}
First we prove the lower bound. If $\Sigma=\mathbb{N}_2$ then let $p:=6$. Otherwise, let $p:=\floor{\frac12 (7+\sqrt{24g+1})}$. Since $g\geq 1$ we have $p\geq 6$. The Map Colour Theorem~\citep{Ringel74} says that $K_p$ embeds in $\Sigma$. To obtain a graph with $n$ vertices embedded in $\Sigma$ repeat the following step $n-p$ times: choose a face $f$ and add a new vertex `inside' $f$ adjacent to all the vertices on the boundary of $f$. Each new vertex creates at least one new copy of $K_4$ (since the boundary of each face is always a clique on at least three vertices). Thus $C(K_4,\Sigma,n)\geq n-p + \binom{p}{4}$ for $n\geq p$. 
Since $\binom{p}{4}-p\geq \frac{1}{24}(p-\frac52)^4$ and $p-\frac52>\sqrt{6g}$ we have
$C(K_4,\Sigma,n)\geq n + \frac{1}{24}(\sqrt{6g})^4=n + \frac32 g^2$. 

Now we prove the upper bound. The claim is trivial for $g=0$, so now assume that $g\geq 1$. By \cref{ExtremalK4}, it suffices to consider an irreducible triangulation $G$. \citet*{JoretWood-JCTB10} proved that $n:=|V(G)|\leq 13g$. Let $v_1,\dots,v_n$ be a vertex ordering of $G$, where $v_i$ has minimum degree in $G_i:=G[\{v_1,\dots,v_i\}]$. 
By Euler's formula, 
$$i\cdot \deg_{G_i}(v_i)  \leq 2|E(G_i)| \leq 6(i+g),$$
and
$$\deg_{G_i}(v_i)  \leq 6\left(1+\frac{g}{i}\right).$$
Define $m:=\ceil{4\sqrt{g}}$. 
The number of copies $v_av_bv_cv_i$ with $a<b<c<i\leq m$ is at most $\binom{m}{4} \leq \binom{4\sqrt{g}+1}{4} \leq \frac{32}{3}g^2$. 
Charge each copy $v_av_bv_cv_i$ with $a<b<c<i$ and $i\geq m+1$ to vertex $v_i$. 
For $m+1\leq i\leq n$, the number of copies charged to $v_i$ is at most 
$$\binom{\deg_{G_i}(v_i)}{3}<36\left(1+\frac{g}{i}\right)^3 
= 36 \left( \left(\frac{g}{i}\right)^3 + 3 \left(\frac{g}{i}\right)^2 + 3 \left(\frac{g}{i}\right) + 1 \right).$$
In total,
$$C(K_4,G) \leq \frac{32}{3}g^2 
+ 36 \sum_{i=m+1}^n \left(\frac{g}{i}\right)^3 + 3 \left(\frac{g}{i}\right)^2 + 3 \left(\frac{g}{i}\right) + 1. $$
By \eqref{NewSumReciprocals} with $s=2$ and $s=3$, 
\begin{equation*}
C(K_4,G) \leq 
 \frac{32}{3}g^2 + 36 \left( \frac{g^3}{2m^2} + \frac{3g^2}{m} + 3g( \ln n + 1) + n \right).
\end{equation*}
Since $m\geq 4\sqrt{g}$ and $n\leq 13g$, 
\begin{align*}
C(K_4,G) 
& \leq 
 \frac{32}{3}g^2 + 
36 \left( \frac{g^2}{32} + \frac{3g^{3/2}}{4} + 3g( \ln (13g) + 1) + 13g \right)\\
& = 
 \frac{283}{24}g^2 + 27 g^{3/2} + 108 g( \ln (13g) + 1) + 468 g. \qedhere
\end{align*}
\end{proof}

\subsection{General Complete Graph}

Now consider the case when $H=K_s$ for some $s\ge 5$. \cref{Main} shows that $C(K_s,\Sigma,n)$ is bounded for fixed $s$ and $\Sigma$. We now show how to determine $C(K_s,\Sigma,n)$ more  precisely. 

\begin{theorem}
\label{ExtremalCompleteGraph} 
For every integer $s\geq5$ and surface $\Sigma$ there is an irreducible triangulation $G$ such that $C(K_s,G)=\max_n C(K_s,\Sigma,n)$.
\end{theorem}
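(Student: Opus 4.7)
The strategy parallels the proofs of \cref{ExtremalK3,ExtremalK4} and is in fact cleaner, because for $s \geq 5$ no ``excess'' reformulation is needed: since $K_s$ is $3$-connected and non-planar, it is strongly non-planar with $f(K_s)=0$, and so by \cref{Main} the quantity $M := \max_n C(K_s,\Sigma,n)$ is finite.

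The first step is to show that $M$ is attained by some triangulation of $\Sigma$. Starting from any graph $G$ embedded in $\Sigma$ with $C(K_s,G)=M$, I would extend $G$ to a triangulation $T$ of $\Sigma$ by successively adding vertices and edges: first make the embedding cellular, then add a diagonal edge across any face containing two non-adjacent vertices, and finally add a new vertex inside any non-triangular face whose vertex set is a clique. Since $G$ is a subgraph of $T$, we have $C(K_s,T)\geq M$, so $C(K_s,T)=M$ by the definition of $M$. Among all triangulations of $\Sigma$ that attain $C(K_s,\cdot)=M$, choose one $G^*$ minimising $|V(G^*)|$.

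The crux is to show that $G^*$ is irreducible, via the following structural observation: for $s \geq 5$, contracting any reducible edge of a triangulation cannot decrease the number of $K_s$-subgraphs. If $vw$ is a reducible edge of $G^*$, contained in exactly the two triangles $vwx$ and $vwy$, then $N(v)\cap N(w)=\{x,y\}$, so any clique containing both $v$ and $w$ has at most four vertices; hence no $K_s$-subgraph of $G^*$ contains both $v$ and $w$. The natural map that identifies $v$ and $w$ into a single vertex therefore sends each $K_s$ in $G^*$ to a $K_s$ in $G^*/vw$, and is injective: two distinct $K_s$-subgraphs of $G^*$ mapping to the same image would have to be of the form $\{v\}\cup S$ and $\{w\}\cup S$ with $S \subseteq N(v)\cap N(w)=\{x,y\}$, contradicting $|S|=s-1\geq 4$. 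Hence $C(K_s,G^*/vw)\geq C(K_s,G^*)=M$.

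Since $G^*/vw$ is a triangulation of $\Sigma$ with $|V(G^*/vw)|<|V(G^*)|$ and (using the maximality of $M$) $C(K_s,G^*/vw)=M$, this contradicts the choice of $G^*$. Therefore $G^*$ has no reducible edge and is the required irreducible triangulation attaining $M$. The main (if mild) obstacle I expect is the extension-to-triangulation step, which is a standard surface-embedding manoeuvre relying only on the fact that adding vertices or edges never decreases $C(K_s,\cdot)$.
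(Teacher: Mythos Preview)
Your proposal is correct and follows essentially the same approach as the paper: extend an extremal graph to a triangulation (adding vertices and edges can only increase $C(K_s,\cdot)$), then observe that contracting a reducible edge $vw$ destroys no $K_s$-subgraph since any clique lost lies in $\{A\cup\{w\}:A\subseteq\{x,v,y\}\}$ and hence has at most four vertices. The only cosmetic difference is that the paper iteratively contracts reducible edges until reaching an irreducible triangulation, whereas you phrase the same descent as choosing a vertex-minimal triangulation attaining $M$; the underlying argument is identical.
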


\begin{proof}
Let $q:=\max_n C(K_s,\Sigma,n)$. Let $G_0$ be a graph embedded in $\Sigma$ with $C(K_s,G_0)=q$. As described in the proof of \cref{TriangulationK3} we can add edges and vertices to $G_0$ to create a triangulation $G$ of $\Sigma$. Adding edges and vertices does not remove copies of $K_s$. Thus $C(K_s,G)=q$. If $G$ is irreducible, then we are done. Otherwise, some edge $vw$ of $G$ is in exactly two triangles $vwx$ and $vwy$. Let $G':=G/vw$. Then $G'$ is another triangulation of $\Sigma$. Observe that every clique of $G$ that is not in $G'$ is in $\{A\cup\{w\}:A\subseteq\{x,v,y\}\}$. Each such clique has at most four vertices. Thus $C(K_s,G')=C(K_s,G)=q$. Repeat this step to $G'$ until we obtain an irreducible triangulation $G''$ with $C(K_s,G'')=q$.
\end{proof}

We now prove a precise bound on $C(K_s,\Sigma,n)$, making no effort to optimise the constant 300. 

\begin{theorem}
\label{CopiesKs}
For every integer $s\geq 5$ and surface $\Sigma$ of Euler genus $g$ and for all $n$, 
$$\left(\frac{\sqrt{6 g}}{s}\right)^s 
\leq C(K_s,\Sigma,n)  \leq 
 \left(\frac{300 \sqrt{g} }{s}\right)^s,$$
where the lower bound holds for all $n\geq\sqrt{6g}\geq s$ and the upper bound holds for all $n$. 
\end{theorem}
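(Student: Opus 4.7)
Following \cref{CopiesK3} and \cref{CopiesK4}, I invoke Ringel's Map Colour Theorem~\citep{Ringel74}: if $p$ is the largest integer for which $K_p$ embeds in $\Sigma$, then $p\geq\sqrt{6g}$ (with the exception $\Sigma=\mathbb{N}_2$ handled by taking $p=6$).  Set $p':=\min\{p,n\}$, so that $K_{p'}\subseteq K_p$ still embeds in $\Sigma$, and $p'\geq\sqrt{6g}\geq s$ by hypothesis.  Padding with $n-p'$ isolated vertices yields an $n$-vertex graph embeddable in $\Sigma$ containing at least $\binom{p'}{s}$ copies of $K_s$.  The elementary inequality $\binom{p'}{s}\geq(p'/s)^s$, valid for $p'\geq s$ (from $(p'-i)/p'\geq(s-i)/s$ for each $0\leq i<s$), then yields $\binom{p'}{s}\geq(\sqrt{6g}/s)^s$, as required.

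\textbf{Upper bound: setup.} For the upper bound, \cref{ExtremalCompleteGraph} together with the $13g$ vertex bound of~\citep{JoretWood-JCTB10} reduces the problem to bounding $C(K_s,H)$ for graphs $H$ embedded in $\Sigma$ with $N:=|V(H)|\leq 13g$; the resulting estimate depends only on $g$ and $s$, not on $n$.  Order $V(H)$ as $v_1,\dots,v_N$ so that $v_i$ has minimum degree in $H_i:=H[\{v_1,\dots,v_i\}]$, so that Euler's formula gives $\deg_{H_i}(v_i)\leq 6(1+g/i)$.  Charging each copy of $K_s$ to its largest-indexed vertex yields
\[
C(K_s,H)\;\leq\;\sum_{i=s}^{N}\binom{\deg_{H_i}(v_i)}{s-1}.
\]
Set $m:=\lceil\sqrt{12g}\,\rceil$ and split the sum into three ranges.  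For $i\leq m$, use the trivial bound $\binom{\deg_{H_i}(v_i)}{s-1}\leq\binom{i-1}{s-1}$ and the hockey-stick identity to get a total of $\binom{m}{s}$.  For $m<i\leq g$, the Euler bound gives $\deg_{H_i}(v_i)\leq 12g/i$, and \eqref{NewSumReciprocals} yields a total contribution of at most $(12g)^{s-1}/((s-1)!(s-2)m^{s-2})$.  For $g<i\leq 13g$, $\deg_{H_i}(v_i)\leq 12$, contributing at most $12^s g/(s-1)!$.

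\textbf{Final estimate.} Since $m\asymp\sqrt{12g}$, the first two terms are each $O((12g)^{s/2}/s!)$ (using $s/(s-2)\leq 5/3$ for $s\geq 5$ to absorb the factor from the second term), while the third is of lower order in $g$.  Stirling's bound $s!\geq(s/e)^s$ converts $(12g)^{s/2}/s!$ into at most $(e\sqrt{12g}/s)^s\leq(10\sqrt{g}/s)^s$, and the residual $s$-dependent factors of the form $2^{s+O(1)}+O(s)$ are absorbed comfortably into a further factor $30^s$, producing the target bound $(300\sqrt{g}/s)^s$.  The conceptual ingredients—\cref{ExtremalCompleteGraph} and the degeneracy-based counting used for $K_3$ and $K_4$—are already in hand, so the only real obstacle is the bookkeeping needed to verify the constant, which the statement explicitly flags as unoptimised.
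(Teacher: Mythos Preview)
Your proof is correct and follows essentially the same approach as the paper: the Map Colour Theorem for the lower bound, and for the upper bound the reduction via \cref{ExtremalCompleteGraph} to an irreducible triangulation on at most $13g$ vertices, followed by a degeneracy ordering and a split near $m\asymp\sqrt{g}$ with \eqref{NewSumReciprocals} controlling the tail. The only cosmetic difference is that the paper uses the uniform bound $\deg_{G_i}(v_i)\leq 84g/i$ (from $i\cdot\deg_{G_i}(v_i)\leq 6(n+g)\leq 84g$) and a two-range split, whereas you use $\deg\leq 12g/i$ for $i\leq g$ and $\deg\leq 12$ for $i>g$, giving a three-range split; both routes land comfortably within the stated constant.
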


\begin{proof}
For the lower bound, it follows from the Map Colour Theorem~\citep{Ringel74} that $K_p$ embeds in $\Sigma$ where $p:=\ceil{\sqrt{6g}}$. Thus, for $n\geq p\geq s$,  
$$C(K_s,\Sigma,n)\geq \binom{\sqrt{6g}}{s}\geq\left(\frac{\sqrt{6g}}{s}\right)^s.$$
Now we prove the upper bound. The claim is trivial for $g=0$, so assume that $g\geq 1$. By \cref{ExtremalCompleteGraph}, it suffices to consider an irreducible triangulation $G$ of $\Sigma$. \citet*{JoretWood-JCTB10} proved that $n:=|V(G)|\leq 13g$. Let $v_1,\dots,v_n$ be a vertex ordering of $G$, where $v_i$ has minimum degree in $G_i:=G[\{v_1,\dots,v_i\}]$. 
By Euler's formula, 
$$i\cdot \deg_{G_i}(v_i)  \leq 2|E(G_i)| \leq 6(i+g)\leq 6(n+g)\leq 84 g.$$
Define $m:=\ceil{\sqrt{g}}$. The number of  copies of $K_s$ in $G[\{v_1,\dots,v_m\}]$ is at most 
$$\binom{m}{s}\leq \left(\frac{2e \sqrt{g} }{s}\right)^s\leq \left(\frac{2e}{s}\right)^s g^{s/2}.$$
Charge every other copy $X$ of $K_s$ to the rightmost vertex in $X$ (with respect to $v_1,\dots,v_n$). 
For $m+1\leq i\leq n$, the number of copies of $K_s$ charged to $v_i$ is at most 
\begin{equation*}
\binom{\deg_{G_i}(v_i)}{s-1}
\leq \left( \frac{e\deg_{G_i}(v_i)}{s-1} \right)^{s-1}
\leq \left(\frac{84eg}{i(s-1)}\right)^{s-1}.
\end{equation*}
In total,
\begin{equation*}
C(K_s,G)\leq 
\left(\frac{2e}{s}\right)^s g^{s/2} + 
\left(\frac{84eg}{s-1}\right)^{s-1}
\sum_{i\geq m+1}\frac{1}{i^{s-1}}.
\end{equation*}
%
%
By \eqref{NewSumReciprocals}, 
\begin{equation*}
C(K_s,G)
\leq  \left(\frac{2e}{s}\right)^s g^{s/2} +  \left(\frac{84eg}{s-1}\right)^{s-1}  \frac{1}{(s-2)m^{s-2}} .
\end{equation*}
Since $m\geq\sqrt{g}$, 
\begin{equation*}
 C(K_s,G)
 \leq  \left(\frac{2e}{s}\right)^s g^{s/2} +  \left(\frac{84eg}{s-1}\right)^{s-1}  \!\!\!
 \frac{1}{(s-2)\,g^{(s-2)/2}} 
 \leq 
\left(\frac{300\sqrt{g}}{s}\right)^s.
\hfill\qedhere
\end{equation*}
\end{proof} 


\subsection{Computational Results}

For $\Sigma\in\{\mathbb{S}_0,\mathbb{S}_1, \mathbb{S}_2,\mathbb{N}_1, \mathbb{N}_2, \mathbb{N}_3,\mathbb{N}_4\}$, we use \cref{TriangulationK3,TriangulationK4,ExtremalCompleteGraph}, the lists of all irreducible triangulations~\citep{Lavrenchenko,Sulanke-KleinBottle,LawNeg-JCTB97,Sulanke-Generating}, and an elementary computer program to count cliques to obtain the exact results for $C(K_s,\Sigma,n)$ shown in \cref{Exact}.

\begin{table}[H]
\caption{\label{Exact}The maximum number of copies of $K_s$ in an $n$-vertex graph embeddable in surface $\Sigma$.}
\medskip
\begin{tabular}{c|ccccccccc|c}
\hline
 $\Sigma$ & $s=0$ & $s=1$ & $s=2$ & $s=3$ & $s=4$ & $s=5$ & $s=6$ & $s=7$ & $s=8$ & total \\
\hline
$\mathbb{S}_0$ & 1 & $n$ & $3n-6$  & $3n-8$ & $n-3$ & & & & & $8n-16$ \\
$\mathbb{S}_1$ & 1 & $n$ & $3n$     & $3n+14$ & $n+28$ & $21$ & $7$ & $1$ & & $8n+72$\\
$\mathbb{S}_2$ & 1 & $n$ & $3n+6$ & $3n+38$ & $n+68$ & $58$ & $28$ & $8$ & $1$ & $8n+208$ \\
$\mathbb{N}_1$ & 1 & $n$ & $3n-3$ & $3n+2$ & $n+9$ & $6$ & $1$ & & & $8n+16$ \\
$\mathbb{N}_2$ & 1 & $n$ & $3n$ & $3n+12$ & $n+21$ & $12$ & $2$ & & & $8n+48$\\
$\mathbb{N}_3$ & 1 & $n$ & $3n+3$ & $3n+24$ & $n+40$ & $27$ & $8$ & $1$ & & $8n+104$ \\
$\mathbb{N}_4$ & 1 & $n$ & $3n+6$ & $3n+39$ & $n+71$ & $61$ & $29$ & $8$ & $1$ & $8n+216$\\
\hline
\end{tabular}
\end{table}

Let $C(G)$ be the total number of complete subgraphs in a graph $G$; that is $C(G)=\sum_{s\geq 0}C(K_s,G)$. For a surface $\Sigma$, let $C(\Sigma,n)$ be the maximum of $C(G)$ taken over all $n$-vertex graphs $G$ embeddable in $\Sigma$. \citet*{DFJSW} proved that $C(\Sigma,n)-8n$ is bounded for fixed $\Sigma$, which is implied by \cref{CopiesK3,CopiesK4,CopiesKs}. The following conjectures have been verified for each of 
$\mathbb{S}_0$, $\mathbb{S}_1$, $\mathbb{S}_2$, $\mathbb{N}_1$, $\mathbb{N}_2$, $\mathbb{N}_3$, $\mathbb{N}_4$. 

\begin{conjecture}
For every surface $\Sigma$ and integer $n$, 
$$C(\Sigma,n)=\sum_{s\geq 0} C(K_s,\Sigma,n).$$
\end{conjecture}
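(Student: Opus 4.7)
The upper bound $C(\Sigma,n)\leq\sum_{s\geq 0}C(K_s,\Sigma,n)$ is immediate, since $C(G)=\sum_sC(K_s,G)\leq\sum_sC(K_s,\Sigma,n)$ for every $n$-vertex graph $G$ embeddable in $\Sigma$. The content of the conjecture is the matching lower bound: a single $n$-vertex graph $G_n$ must attain $C(K_s,G_n)=C(K_s,\Sigma,n)$ for every $s$ simultaneously. My plan is to construct $G_n$ by repeatedly splitting triangles of an appropriately chosen irreducible triangulation $H^*$ of $\Sigma$.

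Following the strategies of \cref{TriangulationK3,TriangulationK4,ExtremalCompleteGraph}, one first shows that some graph maximising $C(G)-8|V(G)|$ among embeddings in $\Sigma$ is a triangulation. Indeed, chords may always be added to non-clique facial walks without decreasing $C(G)$; a facial walk inducing a clique on four distinct vertices admits insertion of a new vertex adjacent to four of them, gaining $\binom{4}{0}+\binom{4}{1}+\binom{4}{2}+\binom{4}{3}+\binom{4}{4}=16$ cliques for one added vertex; and facial walks of the form $(u,v,w,u,v,w)$ are handled as in \cref{ThreeDistinctVertices}. In such an extremal triangulation, contracting a reducible edge $vw$ with faces $vwx,vwy$ decreases $(C(K_1,\cdot),C(K_2,\cdot),C(K_3,\cdot),C(K_4,\cdot),C(K_5,\cdot),\ldots)$ by exactly the vector $(1,3,3,1,0,\ldots)$, provided $xy\in E(G)$ and no ``mixed'' cliques appear through the merged vertex (cliques combining neighbours adjacent only to $v$ with neighbours adjacent only to $w$); extremality forces both conditions. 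Iterating, $G$ is obtained from an irreducible triangulation $H$ by a sequence of triangle splits, and each split adds the vector $(r_1,r_2,r_3,r_4,r_5,\ldots)=(1,3,3,1,0,\ldots)$ to the counts $(C(K_1),C(K_2),C(K_3),C(K_4),C(K_5),\ldots)$, independent of the chosen triangle and of the starting triangulation $H$.

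The conjecture thus reduces to the existence of an irreducible triangulation $H^*$ of $\Sigma$ that, over all irreducible triangulations $H$ of $\Sigma$, simultaneously maximises $C(K_3,H)-3|V(H)|$, $C(K_4,H)-|V(H)|$, and $C(K_s,H)$ for every $s\geq 5$. By \cref{ExtremalK3,ExtremalK4,ExtremalCompleteGraph} these three families of maxima determine $C(K_s,\Sigma,n)$ for each $s$, so triangle-splitting $H^*$ to $n$ vertices yields $G_n$ attaining every $C(K_s,\Sigma,n)$ simultaneously.

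The main obstacle is the existence of such an $H^*$ in general, which is precisely why the statement remains a conjecture. For specific surfaces of small Euler genus this is verified by enumerating the finite list of irreducible triangulations, as the authors did for the seven surfaces tabulated. For general $\Sigma$, I see no obvious reason why a single irreducible triangulation must dominate every $C(K_s,H)-r_s|V(H)|$ quantity simultaneously; different irreducible triangulations might be extremal for different $s$. A plausible strategy is to take $H^*$ to be a triangulation built canonically around a maximum-clique embedding in $\Sigma$ provided by the Map Color Theorem (one containing $K_p$ for $p=\lceil(7+\sqrt{24g+1})/2\rceil$), and argue via local modifications that every other irreducible triangulation can be transformed into $H^*$ without decreasing any $C(K_s,H)-r_s|V(H)|$. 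Establishing such a unified structural result seems to require genuinely new ideas beyond the tools developed in the present paper.
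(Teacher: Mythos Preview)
The statement is a \emph{conjecture} in the paper, not a theorem; the paper offers no proof and only records that it has been verified computationally for the seven surfaces $\mathbb{S}_0,\mathbb{S}_1,\mathbb{S}_2,\mathbb{N}_1,\mathbb{N}_2,\mathbb{N}_3,\mathbb{N}_4$. There is therefore no ``paper's own proof'' to compare against.

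Your write-up correctly recognises this. The reduction you sketch---pass to a triangulation, then contract reducible edges down to an irreducible triangulation, noting that a triangle split changes the vector $(C(K_1),C(K_2),C(K_3),C(K_4),C(K_5),\ldots)$ by exactly $(1,3,3,1,0,\ldots)$---is precisely the framework underlying \cref{ExtremalK3,ExtremalK4,ExtremalCompleteGraph} and the result of \citet{DFJSW}. You also pinpoint the genuine obstruction accurately: one needs a single irreducible triangulation $H^*$ of $\Sigma$ that simultaneously maximises $C(K_3,H)-3|V(H)|$, $C(K_4,H)-|V(H)|$, and $C(K_s,H)$ for every $s\geq 5$, and there is no a priori reason the maximisers for different $s$ coincide. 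Your closing assessment that this ``seems to require genuinely new ideas'' is exactly why the paper leaves it as a conjecture.

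In short, your proposal is not a proof and does not claim to be one; it is a correct diagnosis of why the statement is open, and it aligns with the paper's own position.
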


\begin{conjecture}
If $C(G)=C(\Sigma,n)$ for some $n$-vertex graph $G$ embeddable in a surface $\Sigma$, then for $s\geq 0$, 
$$C(K_s,G) = C(K_s,\Sigma,n).$$
\end{conjecture}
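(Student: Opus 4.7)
The plan is to reduce the conjecture to a finite question about irreducible triangulations of $\Sigma$. First, I would show that any $G$ achieving $C(G)=C(\Sigma,n)$ is a triangulation, via an argument paralleling \cref{TriangulationK3,TriangulationK4}: any face containing non-adjacent vertices, having $\ge 4$ distinct vertices, or taking the form of the repeated sixgon $(u,v,w,u,v,w)$ admits a local modification (adding a chord, inserting a new vertex, or inserting two adjacent new vertices) that strictly increases $C(G)$.

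Next, I would quantify reducible-edge contractions. For a reducible edge $vw$ of a triangulation $G$ with facial triangles $vwx$ and $vwy$, bookkeeping of cliques containing $v$, $w$, or both yields $C(G)-C(G/vw)\le 2c_{vw}$, where $c_{vw}\in\{3,4\}$ is the number of cliques of $G$ containing $\{v,w\}$ and equals $4$ iff $xy\in E(G)$. Equality holds precisely when no clique of $G-v-w$ contained in $N(v)\cup N(w)$ has vertices in both $N(v)\setminus N(w)$ and $N(w)\setminus N(v)$. In particular, a triangle split increases $C$ by exactly $8$, with per-$K_s$ increments $(\Delta C(K_s))_{s\ge 0}=(0,1,3,3,1,0,0,\dots)$. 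Reducing $G$ to an irreducible triangulation $H$ by $t_8$ triangle and $t_6$ path contractions (so $t_8+t_6=n-n_H$) then gives
\[
C(G)\le C(H)+8t_8+6t_6=C(H)+8(n-n_H)-2t_6\le 8n+\max_{H'}\bigl(C(H')-8n_{H'}\bigr),
\]
and extremality forces $t_6=0$, no mixed cliques at any step, and $H$ attaining the maximum; hence $G$ is obtained from such an $H$ by pure triangle splits.

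Setting $\phi_s(H'):=C(K_s,H')-\alpha_s n_{H'}$, \cref{ExtremalK3,ExtremalK4,ExtremalCompleteGraph} give $C(K_s,\Sigma,n)=\alpha_s n+\phi_s^*$ with $\phi_s^*:=\max_{H'}\phi_s(H')$, where the maximum ranges over irreducible triangulations $H'$ of $\Sigma$. Combining with the per-$K_s$ triangle-split increments from the previous step, the conjecture becomes equivalent to the statement: \emph{for every surface $\Sigma$ there is an irreducible triangulation $H^*$ of $\Sigma$ with $\phi_s(H^*)=\phi_s^*$ for every $s\ge 3$ simultaneously.}

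The main obstacle is this simultaneous maximization. For each fixed $\Sigma$ it is a finite check, since by \citet{JoretWood-JCTB10} every irreducible triangulation has at most $13g-4$ vertices; this is how \cref{Exact} was verified. A general proof is delicate because $\phi_3$ favours triangle-rich structures whereas $\phi_s$ for $s\ge 5$ favours large embedded cliques, and there is no a priori reason these should coexist optimally in a single triangulation. My strategy would be to start with an irreducible triangulation built around an embedding of the largest $K_p\subseteq \Sigma$ guaranteed by the Map Colour Theorem---which automatically maximizes $\phi_s$ for large $s$---and then use local exchange moves preserving irreducibility to boost any deficient $\phi_s$. Showing that such exchanges always succeed, and that the large-$s$ optimality is preserved, is where I expect the genuine difficulty of the conjecture to lie.
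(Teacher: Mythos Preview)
The statement you are attempting to prove is listed in the paper as an open \emph{conjecture}, not a theorem; the paper offers no proof. The only evidence the paper gives is that it has been verified computationally for the seven surfaces $\mathbb{S}_0,\mathbb{S}_1,\mathbb{S}_2,\mathbb{N}_1,\mathbb{N}_2,\mathbb{N}_3,\mathbb{N}_4$ by enumerating the irreducible triangulations. So there is no ``paper's own proof'' to compare against.

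Your proposal is not a proof either, and you are candid about this: the final paragraph explicitly names the simultaneous-maximization step as the place ``where I expect the genuine difficulty of the conjecture to lie,'' and the exchange-move strategy you sketch there is purely speculative. What you have actually established is a \emph{reduction}: that the conjecture is equivalent to the finite statement that some irreducible triangulation of $\Sigma$ simultaneously attains $\phi_s^*$ for every $s\ge 3$. This reduction is sound and mirrors exactly what the paper does for the individual $K_s$ (\cref{ExtremalK3,ExtremalK4,ExtremalCompleteGraph}); indeed, it is presumably how the authors verified the conjecture for the small surfaces in \cref{Exact}. But a reduction to a finite-per-surface statement is not a proof for general $\Sigma$, and the paper does not claim otherwise.

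One technical point worth tightening in your reduction: you assert that extremality of $C(G)$ forces ``no mixed cliques at any step.'' This is needed to get the clean per-$K_s$ increments $(0,1,3,3,1,0,\dots)$ along the contraction sequence, but it does not follow automatically from $t_6=0$. You should argue it directly: a mixed clique in $G/vw$ strictly decreases $C(G)-C(G/vw)$ below $8$ even when $xy\in E(G)$, so extremality of $C(G)$ at every step rules them out. With that in place your equivalence is correct, but the conjecture itself remains open.
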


Conversely, we conjecture that maximising the number of triangles is equivalent to maximising the total number of complete subgraphs. More precisely:

\begin{conjecture}
\label{DeterminedByTriangles}
If $C(K_3,G)=C(K_3,\Sigma,n)$ for some $n$-vertex graph $G$ embeddable in a surface $\Sigma$, then 
$$C(G) = C(\Sigma,n).$$
\end{conjecture}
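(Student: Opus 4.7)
The plan is to reduce the conjecture, via the splitting framework underlying \cref{ExtremalK3}, \cref{ExtremalK4} and \cref{ExtremalCompleteGraph}, to a finite structural statement about irreducible triangulations of $\Sigma$, and then attempt to establish that statement.

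Suppose $G$ satisfies $C(K_3,G)=C(K_3,\Sigma,n)$. Then $G$ attains the maximum $K_3$-excess, so \cref{TriangulationK3} forces $G$ to be a triangulation of $\Sigma$ (in particular $|E(G)|=3(n+g-2)=C(K_2,\Sigma,n)$ already), and \cref{ExtremalK3} provides some $H_0\in X$ (an irreducible triangulation of $\Sigma$ with $K_3$-excess $\phi$) together with a sequence of triangle-splits producing $G$ from $H_0$. A single triangle-split of $xvy$ at $v$ inserts a new degree-$3$ vertex $w$ adjacent to the clique $\{x,v,y\}$; every new clique must contain $w$ and be contained in $\{w,x,v,y\}$, giving exactly $3$ new $K_3$'s, exactly $1$ new $K_4$, and no new $K_s$ for $s\geq 5$. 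Iterating,
\begin{equation*}
C(K_4,G) = C(K_4,H_0) + (n - |V(H_0)|), \qquad C(K_s,G) = C(K_s,H_0) \text{ for all } s\geq 5.
\end{equation*}
By \cref{ExtremalK4} and \cref{ExtremalCompleteGraph}, the maxima $C(K_4,\Sigma,n)$ and $C(K_s,\Sigma,n)$ are themselves attained by graphs obtained from (a priori different) irreducible triangulations by splitting triangles, and those splits change $C(K_4,\cdot)$ by one per new vertex and leave $C(K_s,\cdot)$ fixed for $s\geq 5$. Hence the conjecture reduces to the following purely finite statement: every $H_0\in X$ simultaneously attains, among all irreducible triangulations of $\Sigma$, the maximum $K_4$-excess and the maximum $C(K_s,\cdot)$ for each $s\geq 5$. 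Granting this, $C(K_s,G)=C(K_s,\Sigma,n)$ for every $s\geq 0$, and summing over $s$ yields $C(G)=\sum_s C(K_s,\Sigma,n)\geq C(\Sigma,n)\geq C(G)$, so $C(G)=C(\Sigma,n)$ as required.

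The main obstacle will be the reduced finite statement, since a priori the irreducible triangulation maximising triangles need not maximise $K_4$'s or larger cliques. One natural strategy is structural: show that every $H_0\in X$ contains (and accounts for all $K_s$ with $s\geq 4$ through) a Ringel--Youngs-type $K_p$-subtriangulation with $p\approx\sqrt{6g}$, the very subgraph used in the lower bounds of \cref{CopiesK3}, \cref{CopiesK4} and \cref{CopiesKs}; the small clique counts in \cref{Exact} are consistent with this. A complementary strategy is an exchange argument: if some irreducible triangulation $H'$ satisfies $C(K_s,H')>C(K_s,H_0)$ for some $s\geq 4$, perform a local surgery on $H_0$ importing the clique-rich region of $H'$ to produce an irreducible triangulation of $\Sigma$ with $K_3$-excess strictly exceeding $\phi$, contradicting $H_0\in X$. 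Executing either strategy in full generality appears to require finer structural control over extremal irreducible triangulations than is currently available, which is presumably why the conjecture has only been verified for the small surfaces of \cref{Exact}.
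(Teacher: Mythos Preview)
The statement you are attempting is \emph{Conjecture}~\ref{DeterminedByTriangles}, and the paper does not supply a proof; it is presented as an open problem, verified only by direct computation for the seven surfaces in \cref{Exact}. Your proposal is therefore not competing against an existing argument.

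Your reduction is sound as far as it goes: you correctly observe that an extremal $G$ must (for $n$ at least the order of some member of $X$) be a triangulation obtained from some $H_0\in X$ by triangle-splits, and that each such split contributes exactly $3$ new triangles, $1$ new $K_4$, and nothing for $K_s$ with $s\geq 5$, so the question collapses to a finite claim about irreducible triangulations. But you then candidly state that establishing this finite claim ``appears to require finer structural control over extremal irreducible triangulations than is currently available.'' That is precisely the obstruction; your two suggested strategies (a Ringel--Youngs structural argument and a local surgery/exchange argument) are plausible heuristics, but neither is carried out, and there is no evident reason why the irreducible triangulation maximising $K_3$-excess should coincide with the one maximising $K_4$-excess or $C(K_s,\cdot)$ for each $s\geq 5$. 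In short, your write-up is an accurate reformulation of the open problem rather than a proof of it, and the paper is in the same position.
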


Note that $K_3$ cannot be replaced by some arbitrary complete graph in \cref{DeterminedByTriangles}. For example, every graph embeddable in $\mathbb{N}_3$ contains at most one copy of $K_7$, but there are irreducible triangulations $G$ of $\mathbb{N}_3$ that contain $K_7$ and do not maximise the total number of cliques (that is, $C(G)<8|V(G)|+104$). Similarly, every graph embeddable in $\mathbb{N}_4$ contains at most $8$ copies of $K_7$, but there are irreducible triangulations $G$ of $\mathbb{N}_4$ for which $C(K_7,G)=8$ and $C(G)<8|V(G)|+216$. 

\section{Minor-Closed Classes}
\label{MinorClosedClasses}

Consider the following natural open problem extending our results for graphs on surfaces: For graphs $H$ and $X$ and an integer $n$, what is the maximum number of copies of $H$ in an $n$-vertex $X$-minor-free graph? This problem has been extensively studied when $H$ and $X$ are complete graphs~\citep{Wood16,FOT10,LO15,FW17,NSTW06,RW09}. \citet*{Eppstein93} proved the following result when $X$ is a complete bipartite graph and $H$ is highly connected. 

\begin{theorem}[\cite{Eppstein93}] 
\label{flopnumber0}
Fix positive integers $s\leq t$ and a $K_{s,t}$-minor-free graph $H$ with no $(\leq s-1)$-separation. Then every $n$-vertex $K_{s,t}$-minor-free graph contains $O(n)$ copies of $H$.
\end{theorem}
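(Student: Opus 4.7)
The plan is to follow the Sunflower Lemma plus minor-extraction template used in the proof of \cref{BoundedCopies}, replacing the additivity of Euler genus with the Kostochka--Thomason-type density bound for $K_{s,t}$-minor-free graphs: there is a constant $d = d(s,t)$ such that every $n$-vertex $K_{s,t}$-minor-free graph has at most $dn$ edges. Under this bound, it suffices to prove a rooted estimate: there exists a constant $C = C(H, s, t)$ such that for every $K_{s,t}$-minor-free graph $G$ and every edge $e_G = uv \in E(G)$, at most $C$ copies of $H$ in $G$ contain $e_G$. Double-counting edge--copy incidences then gives
\[
|E(H)| \cdot C(H, G) \leq \sum_{e_G \in E(G)} (\text{copies of } H \text{ containing } e_G) \leq C \cdot d \cdot n,
\]
so $C(H, G) = O(n)$, as required.

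To prove the rooted estimate, fix $e_G = uv$ and suppose for contradiction that more than $C$ copies of $H$ in $G$ contain $e_G$. Applying the Sunflower Lemma (\cref{sunflower}) to the vertex sets of these copies (each an $h$-subset of $V(G)$ containing $\{u,v\}$), and then pigeonholing over the at most $h^h$ pairs consisting of a subset $R_H \subseteq V(H)$ and a bijection $R_H \to R$, we extract a sub-sunflower of $T$ copies (with $T$ large in terms of $h, s, t$) sharing a common kernel $R$ such that every copy pulls back $R$ to the same $R_H$ via the same bijection $\phi$. Let $C_1, \dots, C_k$ be the components of $H - R_H$ and, for each $j$, let $N_j := N_H(V(C_j)) \cap R_H$ be the attachment of $C_j$ in $R_H$. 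The hypothesis that $H$ has no $(\leq s-1)$-separation implies that whenever the separation $(H[V(C_j) \cup N_j], H - V(C_j))$ is proper, one has $|N_j| \geq s$.

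The favourable case is that some $N_j$ satisfies $|N_j| \geq s$. In this case the $T$ disjoint petals of the sunflower contain $T$ pairwise vertex-disjoint copies of $C_j$ in $G$, each attached in $G$ to every vertex of the common set $\phi(N_j) \subseteq R$; contracting each petal-copy of $C_j$ to a single vertex produces a $K_{s, T}$ minor of $G$, which for $T \geq t$ contradicts $K_{s,t}$-minor-freeness.

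The main obstacle is the residual case in which every $N_j$ satisfies $|N_j| < s$. The no-$(\leq s-1)$-separation hypothesis then forces $k = 1$ and $N_1 = R_H$ with $|R_H| < s$, and the same hypothesis applied to a hypothetical small vertex cut of $C_1 := H - R_H$ combined with $R_H$ shows that $C_1$ must itself be $(s - |R_H|)$-connected. Exploiting this additional structure is the delicate part of the plan: one aims to use Menger's theorem inside a single petal to find $s - |R_H|$ internally disjoint paths from a suitably chosen interior vertex to the image of $R_H$, thereby producing $s - |R_H|$ additional branch sets which, combined with the $|R_H|$ singleton branch sets in $R$, constitute the $s$-side of a sought $K_{s,t}$ minor whose $t$-side consists of contractions of the remaining $T-1$ petals. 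Establishing the required adjacency between the in-petal branch sets and the remaining petals --- which share no vertices with the chosen petal --- is the main technical challenge, and is likely to require a further refinement step (for instance a second round of sunflower extraction restricted to the remaining petals) or a more global argument exploiting the interplay between the $(s - |R_H|)$-connectivity of $C_1$ and the attachment structure of $H$. The bookkeeping of the constants $C$ and $T$ through this refinement is the final delicate piece of the argument.
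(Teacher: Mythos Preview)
The paper does not prove this theorem; it is quoted from \citet{Eppstein93} and used as a black box. So there is no in-paper argument to compare against, but your plan has a genuine gap worth naming.

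The flaw appears earlier than the ``residual case'' you flag as delicate: the rooted estimate you set out to prove---that only a bounded number of copies of $H$ contain any fixed edge of $G$---is false in general. Take $s=t=3$ and $H=K_4$, which is $K_{3,3}$-minor-free and has no $(\leq 2)$-separation. Let $G$ be the fan with hubs $u,v$ joined to each other and to each of $w_1,\dots,w_k$, together with the path edges $w_iw_{i+1}$ for $1\leq i<k$. Then $G$ is planar, hence $K_{3,3}$-minor-free, yet the single edge $uv$ lies in all $k-1$ copies $\{u,v,w_i,w_{i+1}\}$ of $K_4$. No constant $C=C(H,s,t)$ as in your plan can exist. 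The theorem itself is not violated---the total count is still $k-1=O(n)$---but the copies pile onto one edge. In your sunflower language this example lands exactly in the residual case $|R_H|=2<s=3$, and the adjacency obstruction you noticed (the remaining petals are disjoint from, hence not adjacent to, any branch sets carved out inside one chosen petal) is not a technicality to be refined away: it is precisely the mechanism by which the rooted-edge estimate fails.

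The upshot is that anchoring on an edge is too weak once $s\geq 3$; the kernel of a sunflower of copies through a fixed edge can have fewer than $s$ vertices, and then no contraction of the petals produces a $K_{s,t}$ minor. A working argument must ensure that the common part of many copies already contains $s$ vertices to which the petals can be fanned out. Eppstein's proof does not proceed via a per-edge bound; you will need a different anchoring or counting scheme.
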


What happens when $H$ is not highly connected? We have the following lower bound. Fix positive integers $s\leq t$ and a $K_{s,t}$-minor-free graph  $H$. If $H$ has no $(\leq s-1)$-separation, then let $k:=1$; otherwise, let $k$ be the maximum number of pairwise independent $(\leq s-1)$-separations in $H$. The construction in \cref{LowerBound} generalises to give $n$-vertex $K_{s,t}$-minor-free graphs containing $\Theta(n^k)$ copies of $H$. 

The following question naturally arises: 

\begin{open}
\label{open1}
Does every $n$-vertex $K_{s,t}$-minor-free graph contain $O(n^k)$ copies of $H$? 
\end{open}

By \cref{flopnumber0}, the answer is `yes' if $k=1$. The methods presented in this paper show the answer is `yes' if $s\leq 3$. We omit the proof, since it is essentially the same as for graphs embedded on a surface, except that in the $k=1$ case we use \cref{flopnumber0} instead of the additivity of Euler genus (\cref{Additivity}).

When $H$ is a tree, this problem specialises as follows:  Fix a tree $T$ and positive integers $s\leq t$. Let $\beta(T)$ be the size of the largest stable set of vertices in $T$, each with degree at most $s-1$. The construction in \cref{TreeCopies} generalises to give $n$-vertex $K_{s,t}$-minor-free graphs containing $\Omega(n^{\beta(T)})$ copies of $T$. 

\begin{open}
\label{open2}
Does every $n$-vertex $K_{s,t}$-minor-free graph contain  $O(n^{\beta(T)})$ copies of $T$?
\end{open}

\section{Homomorphism Inequalities}
\label{Homomorphism}

This section reinterprets some of our results in terms of homomorphism inequalities, and presents some open problems that arise from this viewpoint. 

For two graphs $H$ and $G$, a \defn{homomorphism} from $H$ to $G$ is a function $\phi:V(H)\rightarrow V(G)$ that preserves adjacency; that is, $\phi(v)\phi(w)$ is an edge of $G$ for each edge $vw$ of $H$. Let $\hom(H,G)$ be the number of homomorphisms from $H$ to $G$. For example, $\hom(H,K_t)>0$ if and only if $H$ is $t$-colourable. In the other direction, $\hom(K_1, G)$ is the number of vertices in $G$, and $\hom(K_2, G)$ is twice the number of edges in $G$, and $\hom(K_3,G)$ is 6 times the number of triangles in $G$. 

Homomorphism inequalities encode bounds on the number of copies of given graphs in a host graph. Much of extremal graph theory can be written in terms of homomorphism inequalities, and a beautiful theory has recently developed that greatly simplifies the task of proving such inequalities; see~\citep{Lovasz12}. 

Consider the following concrete example. \citet*{Mantel07} proved that every $n$-vertex graph with more than $\frac{n^2}{4}$ edges has a triangle, which is tight for the complete bipartite graph $K_{n/2,n/2}$. \citet*{Goodman59} strengthened Mantel's Theorem by providing a lower bound of $\frac{m}{3} ( \frac{4m}{n} - n )$ on the number of triangles in an $n$-vertex $m$-edge graph. Goodman's Theorem can be rewritten as the following homomorphism inequality:
\begin{equation}
\label{Goodman}
\hom(K_1, G)\hom(K_3, G) \geq \hom(K_2, G)(2\hom(K_2, G) - \hom(K_1, G)^2).
\end{equation}
In a celebrated application of the flag algebra method, \citet*{Razborov08} generalised \eqref{Goodman} by determining the minimum number of triangles in an $n$-vertex $m$-edge graph. The minimum number of copies of $K_r$ in an $n$-vertex $m$-edge graph (the natural extension of Turan's Theorem) was a notoriously difficult question~\citep{LovSim76,LovSim83}, recently solved for $r=4$ by \citet*{Nikiforov11} and in general by \citet*{Reiher16}. All of these results can be written in terms of homomorphism inequalities. 

The results of this paper can be written in terms of homomorphism counts, since $I(H,G)$ equals the number of injective homomorphisms from $H$ to $G$, which equals $\hom(H,G)$ if $H$ is a complete graph. 


Here is another example of a homomorphism inequality for graphs on surfaces. Euler's Formula implies\footnote{Let $G$ be a
graph with $n$ vertices, $m$ edges  and $c$ components. Let $\Sigma$ be a surface with Euler genus $g$. 
Assume that $G$ embeds in $\Sigma$ with $t$ triangular faces and $f$ non-triangular faces. By Euler's formula, $n-m+t+f=1+c-g$. Double-counting edges, $3t+4f \leq 2m$. Thus 
$4(m-n-t + 1+c-g) = 4f \leq 2m -3t$ and
$t \geq 2m-4n + 4+4c - 4g
\geq 2(m-2n + 4 - 2g)$, as claimed.} that the number of triangles in an $n$-vertex $m$-edge graph with Euler genus $g$ is at least $2(m-2n+4-2g)$. This result is an analogue of Goodman's Theorem for graphs $G$ of Euler genus $g$, and can be written as the following homomorphism inequality:
\begin{equation*}
\hom(K_3,G) \geq 6\hom(K_2,G)-24\hom(K_1,G) + 48 - 24g.
\end{equation*}

We consider it an interesting line of research to prove similar homomorphism inequalities in other minor-closed classes. The following open problems naturally arise. 

\begin{itemize}
    \item Is there a method (akin to flag algebras~\citep{Razborov08} or graph algebras~\citep{Lovasz12}) for systematically proving homomorphism inequalities in minor-closed classes? 
    
\item  \citet*{HN11} proved that it is undecidable to test the validity of a linear homomorphism inequality. 
In which minor-closed classes is it decidable to test  the validity of a linear homomorphism inequality? 
\end{itemize}

These questions are open even for forests; see~\citep{BEMS16,BL16,CSW17,CKMW21} for related results. 

Closely related to the study of graph homomorphisms is the theory of graph limits and graphons~\citep{Lovasz12}. While this theory focuses on dense graphs, a theory of graph limits for sparse graphs is emerging. For example, results are known for bounded degree graphs~\citep{BCKL13,HLS14},  planar graphs~\citep{IO01,GN13}, and bounded tree-depth graphs~\citep{NO20}. The above questions regarding graph homomorphisms parallel the theory of graph limits in sparse classes. 



%


\subsection*{Acknowledgement}
We would like to thank Kevin Hendrey for alerting us to an error in the proof of \cref{main} in an earlier version of this paper.  We also thank Casey Tompkins for pointing out reference~\citep{GPSTZc}. \citet*{GPSTZc} prove \cref{TreeCopies} in the case $\Sigma=\mathbb{S}_0$, and conjecture that $C(H,\Sigma_0,n)=\Theta(n^k)$ for some integer $k=k(H)$, which is implied by \cref{Main}. Finally, many thanks to the referee for several insightful comments. 

\subsection*{Postscript}

Following the initial release of this paper, \cref{open1} was answered in the negative by \citet{Liu21} and \cref{open2} was answered in the affirmative by \citet{HW21}.

\bibliographystyle{DavidNatbibStyle}
\bibliography{paper}
\end{document}